\def\R{\mathbb{R}}
\def\epsilon{\varepsilon}
\renewcommand\tilde{\widetilde}
\newcommand{\be}{\begin{equation}}
\newcommand{\ee}{\end{equation}}
\newcommand{\baa}{\begin{array}}
\newcommand{\eaa}{\end{array}}
\newcommand{\ba}{\begin{eqnarray}}
\newcommand{\ea}{\end{eqnarray}}
\newtheorem{theorem}{Theorem}[section]
\newtheorem{lemma}[theorem]{Lemma}
\newtheorem{definition}[theorem]{Definition}
\numberwithin{equation}{section}
\newenvironment{proof}[1][Proof]{\noindent\textbf{#1.} }{\hfill $\Box$}
\begin{document}
\date{}
\title{\bf{Curved fronts of  bistable reaction-diffusion equations in spatially periodic media:  $N\ge 2$}}
\author{ Hongjun Guo$^{\dagger}$,  Haijian Wang$^{\dagger}$\\
		\\
		\footnotesize{ $^{\dagger}$ School of Mathematical Sciences, Key Laboratory of Intelligent Computing and }\\
		\footnotesize{Applications (Ministry of Education), Institute for Advanced Study, Tongji University, Shanghai, China}
		}
\maketitle

\begin{abstract}\noindent{}

This paper is concerned with curved fronts of bistable reaction-diffusion equations in spatially periodic media for dimensions $N\geq 2$. The curved fronts concerned are transition fronts connecting $0$ and $1$. Under a priori assumption that there exist moving pulsating fronts in every direction, we show the existence of polytope-like curved fronts with $0$-zone being a polytope and $1$-zone being the complementary set. By reversing some conditions, we also show the existence of curved fronts with reversed $0$-zone and $1$-zone. Furthermore, the curved fronts constructed by us are proved to be unique and asymptotic stable. 
\vskip 0.1cm
\noindent\textit{Keywords: Reaction-diffusion equations; Curved fronts; Spatial periodicity; Uniqueness; Stability.}

\noindent\textit{MSC2020: 35B10; 35C10; 35K15; 35K57.}
\end{abstract}


\section{Introduction}
In this paper, we consider the following spatially periodic reaction-diffusion equation
\begin{equation}\label{1.1}
u_t-\nabla \cdot(A(x)\nabla  u)=f(x,u),\quad (t,x)\in \mathbb{R}\times \mathbb{R}^N, 
\end{equation}
where $N\geq 2$. The reaction term $f(x,u)$ and the coefficient matrix $A(x)$ are assumed to satisfy the following hypotheses:
\begin{itemize}
\item[(A1)] $f(x,u)$ is of $C^{\alpha}$ ($0<\alpha<1$) in $x\in\mathbb{R}^N$ uniformly for $u\in[0,1]$, and of $C^2$ in $u\in[0,1]$ uniformly for $x\in\mathbb{R}^N$.

\item[(A2)] $f(x,u)$ is $L$-periodic with respect to $x$, that is, there exists $L=(L_1,\cdots,L_N)\in\mathbb{R}^N$ such that,
$$f(x_1+k_1L_1,\cdots,x_n+k_NL_N,u)=f(x_1,\cdots,x_N,u)$$
for any $(x_1,\cdots,x_N,u)\in\mathbb{R}^N\times\mathbb{R}$ and $(k_1,\cdots,k_N)\in\mathbb{Z}^N$.

\item[(A3)] $f(x,u)$ is of bistable type in $u$, that is, for every $x\in\mathbb{R}^N$, $f(x,0)=f(x,1)=0$, $f(x,u)<0$ in $u\in(0,\theta_x)$ and $f(x,u)>0$ in $u\in(\theta_x,1)$ for some periodic function $\theta_x\in(0,1)$. In addition, there exist $\kappa>0$ and $\sigma\in(0,1/2)$ such that $-f_u(x,u)\geq\kappa$ for $(x,u)\in\mathbb{R}^N\times [0,\sigma]\cup \mathbb{R}^N\times [1-\sigma,1]$.

\item[(A4)] $A(x)=(a_{ij}(x))_{1\leq i,j\leq N}$ is $L$-periodic, smooth, positive definite and there exist $0<\lambda_1\leq\lambda_2$ such that
\begin{equation*}
\lambda_1|\xi|^2\leq \sum_{i,j=1}^N  a_{ij}(x)\xi_i\xi_j\leq\lambda_2|\xi|^2 
\end{equation*}
for all $x\in\mathbb{R}^N$ and $\xi\in\mathbb{R}^N$.
\end{itemize}

Such an equation and its traveling wave solutions have been studied extensively due to the development of mathematical models in physics, chemistry and biology. See \cite{1937THE} for the reaction-diffusion model describing the propagation of advantageous genes and see \cite{MR3029132} for the traveling waves in the Belousov-Zhabotinsky reactions.

For mathematical convenience, we extend $f(x,u)$ to $\mathbb{R}^N\times\mathbb{R}$ such that
\begin{equation*}
-f_u(x,u)\geq\kappa\text{ for all }(x,u)\in\mathbb{R}^N\times((-\infty,\sigma]\cup[1-\sigma,+\infty)).
\end{equation*}
Then, $f(x,u)$ is globally Lipschitz continuous in $u\in\mathbb{R}$ uniformly for $x\in\mathbb{R}^N$. A typical example of $f(x,u)$ is the cubic function, that is,
\begin{equation*}
f(x,u)=u(1-u)(u-\theta(x)),
\end{equation*}
where $\theta(x):\mathbb{R}^N\to (0,1)$ is a H\"{o}lder continuous and $L$-periodic function.

Before presenting our results, we first recall some known results about planar and curved fronts of reaction-diffusion equations. For the following homogeneous bistable equation
\begin{equation}\label{homogeneous case}
u_t-\Delta u=f(u),\quad (t,x)\in \mathbb{R}\times \mathbb{R}^N,
\end{equation}
with $f$ satisfying
$$f(0)=f(1)=0,\, f(u)<0 \hbox{ for $u\in (0,\theta)$ and } f(u)>0 \hbox{ for $u\in (\theta,1)$}, \hbox{ with some $\theta\in(0,1)$},$$
it follows from the well-known paper of Fife and McLeod \cite{1977The} that there exists a unique (up to shifts) traveling front $\phi(x\cdot e-c_ft)$ for every $e\in\mathbb{S}^{N-1}$ satisfying $0<\phi<1$, $\phi'<0$, $ \phi(-\infty)=1$ and $\phi(+\infty)=0$, where $e\in\mathbb{S}^{N-1}$ denotes the propagation direction and $c_f$ denotes the propagation speed, which is uniquely determined by $f$ and has the same sign of $\int_0^1 f(s)ds$. Solutions of this form are called planar fronts, since all level sets of $\phi(x\cdot e-c_ft)$ are hyperplanes. 

In addition to planar fronts, there exist many types of curved fronts in high dimensional spaces. For $N\ge 2$ and $c_f>0$, Hamel, Monneau and Roquejoffre \cite{MR2166719} studied  the existence and qualitative properties of conical-shaped fronts $u(t,x)=\phi(|x'|,x_N-ct)$ of \eqref{homogeneous case}, where $x'=(x_1,\dots,x_{N-1})$ and $c>c_f$. Let $\alpha\in(0,\pi/2)$ be a  given angle, and then the conical-shaped front satisfies
\begin{equation*}
	\left\{
	\begin{aligned}
		&\phi(r,z)\to 1\text{ (resp. 0 ) unif. as }z-\psi(r)\to -\infty\text{ (resp. $+\infty$)},\\
		&c=\frac{c_f}{\sin\alpha}\text{ and }\psi'(+\infty)=\cot\alpha,\\		
	\end{aligned}
	\right.
\end{equation*}
for some $C^1$ function $\psi:[0,+\infty)\to\mathbb{R}.$ For $N=2$, Ninomiya and Taniguchi \cite{MR2139343,MR2220750} also have shown the existence, uniqueness and global stability of the conical-shaped front.  This front in dimension $2$ is also called V-shaped front since the level sets of the front are like V-shaped curves having  asymptotic lines, that is, 
\begin{equation*}
	\psi(x_1)-\cot\alpha|x_1|\to 0, \text{ as }|x_1|\to +\infty.
\end{equation*}
Moreover, it has been proved that the V-shaped fronts are the only type of traveling fronts for \eqref{homogeneous case} in dimension $2$, see \cite{Gui2012} and \cite{MR2166719}. For high dimensions, more curved fronts including non-axisymmetric ones are also known to exist, see \cite{Ninomiya2021,MR2838366,T2015,MR2318388,MR2494701, Taniguchi2012}.  For the case $c_f=0$, the conical-shaped fronts cannot exist, see \cite{HM2000}, but there exist exponential shaped fronts ($N=2$) and parabolic shaped fronts ($N\ge 3$), see \cite{CGHNR}. We also refer to serial works of Taniguchi \cite{T2019,T2020,T2024} for the existence of axisymmetric and non-axisymmetric fronts in the case $c_f=0$.

Besides the homogeneous case,  front-like solutions in heterogeneous media have also attracted a lot of attention in recent years. For instance, Berestycki, Hamel and Matano \cite{BHM} constructed almost planar fronts in exterior domains and Guo and Monobe \cite{MR4211100} proved the existence of almost V-shaped fronts later on. Other researches regarding to fronts in exterior domains plus time-periodic environment, spatially periodic environment, nonlocal dispersion and so on, can be referred to \cite{MR4656878,Li2021,QLS}.  Front-like solutions also exist in cylinders or cylinder-like domains, see \cite{BBC,CG,GHS,MR4517350,MR2517769}. We also refer to \cite{MR3437538,MR3876557,MR2833431,MR4334733,MR4440469,MR2771259,MR4518515,MR4593934} for the existence, uniqueness and stability of curved fronts for equations with periodic reaction terms.

The traveling fronts and front-like solutions actually share some common features, that is, they all converge to the stable states $0$ and $1$ far away from their moving or stationary level sets, uniformly in time. This observation led to the introduction of a more general notion of traveling fronts, that is, transition fronts, which were introduced by Berestycki and Hamel in \cite{MR2898886}. We recall some notations for transition fronts here. First, let $d(A,B)$ denote the Euclidean distance between any two subsets $A$ and $B$ of $\mathbb{R}^N$, that is, $d(A,B)=\inf\{|x-y|;(x,y)\in A\times B\}$, where $|\cdot|$ is the Euclidean norm in $\mathbb{R}^N$. For $x\in\mathbb{R}^N$ and $r>0$, let $B(x,r)=\{y\in\mathbb{R}^N; |x-y|\le r\}$.  Consider two families $(\Omega_{t}^{-})_{t\in\mathbb{R}}$ and $(\Omega_{t}^{+})_{t\in\mathbb{R}}$ of open nonempty subsets of $\mathbb{R}^N$ satisfying
\begin{equation}\label{transition1}
	\forall t\in\mathbb{R},\quad	\left\{
	\begin{aligned}
		&\Omega_{t}^{-}\cap\Omega_{t}^{+}=\emptyset,\\
		&\partial\Omega_{t}^{-}=\partial\Omega_{t}^{+}=:\Gamma_t,\\
		&\Omega_{t}^{-}\cup\Gamma_t\cup\Omega_{t}^{+}=\mathbb{R}^N,\\
		&\sup\{d(x,\Gamma_t);x\in\Omega_{t}^{+}\}=\sup\{d(x,\Gamma_t);x\in\Omega_{t}^{-}\}=+\infty.
	\end{aligned}
	\right.
\end{equation}
and
\begin{equation}\label{transition2}
\left\{\begin{aligned}
	&\inf\{\sup\{d(y,\Gamma_t);y\in\Omega_{t}^{+}\cap B(x,r)\}; t\in\mathbb{R}; x\in\Gamma_t\}\to+\infty,\\
	&\inf\{\sup\{d(y,\Gamma_t);y\in\Omega_{t}^{-}\cap B(x,r)\}; t\in\mathbb{R}; x\in\Gamma_t\}\to+\infty,
	\end{aligned}
	\quad\text{ as }r\to+\infty.
	\right.
\end{equation}
The above conditions imply that $\Gamma_t$ splits $\mathbb{R}^N$ into two unbounded parts $\Omega_{t}^{-}$ and $\Omega_{t}^{+}$, and for each $t\in\mathbb{R}$, sets $\Omega_{t}^{\pm}$ are assumed to contain points which could be infinitely far from $\Gamma_t$. The condition \eqref{transition2} means that any point on $\Gamma_t$ is not too far from the centers of two large balls  included in $\Omega_t^-$ and $\Omega_t^+$, this property being in some sense uniform with respect to $t$ and to the point on $\Gamma_t$. Furthermore, it was assumed in \cite{MR2898886} that the interfaces $\Gamma_t$ are made of a finite number of graphs, that is, there is an integer $n\geq 1$ such that, for each $t\in\mathbb{R}$, there are $n$ open subsets $\omega_{i,t}\subset\mathbb{R}^{N-1}$, $n$ continuous maps $\psi_{i,t}:\omega_{i,t}\to\mathbb{R}$ and $n$ rotations $R_{i,t}$ of $\mathbb{R}^N$ (for all $1\leq i \leq n$), such that
\begin{equation}\label{transition3}
	\Gamma_t\subset\bigcup_{1\leq i \leq n}R_{i,t}(\{x\in\mathbb{R}^N; (x_1,\dots,x_{N-1})\in\omega_{i,t}, x_N=\psi_{i,t}(x_1,\dots,x_{N-1})\}).
\end{equation}

\begin{definition}
	\cite{MR2898886} For problem \eqref{1.1}, a transtion front connecting $0$ and $1$ is a classical (time-global) solution $u$ of  \eqref{1.1} such that $u\not\equiv0,1$, and there exist some sets $(\Omega_{t}^{\pm})_{t\in\mathbb{R}}$ and $(\Gamma_t)_{t\in\mathbb{R}}$ satisfying \eqref{transition1}, \eqref{transition2},\eqref{transition3}, and for every $\varepsilon>0$, there exists $M>0$ such that
	\begin{eqnarray}\label{transition4}
		\left\{
		\begin{array}{lll}
			\forall t\in\mathbb{R},\, \forall x\in\Omega_{t}^{+}, &&(d(x,\Gamma_t)\geq M)\Rightarrow(u(t,x)\geq 1-\varepsilon),\\
			\forall t\in\mathbb{R},\, \forall x\in\Omega_{t}^{-}, &&(d(x,\Gamma_t)\geq M)\Rightarrow(u(t,x)\leq \varepsilon).	
		\end{array}
		\right.
	\end{eqnarray}
	Furthermore, $u$ is said to have a global mean speed $\gamma$ ($\ge 0$) if
	$$\frac{d(\Gamma_t,\Gamma_s)}{|t-s|}\rightarrow \gamma  \hbox{ as } |t-s|\rightarrow +\infty.$$
\end{definition}


Because of \eqref{transition4}, the interfaces $\Gamma_t$ represent the location of the transition front in some sense. Thus, the global mean speed is defined by the limiting normal speed of $\Gamma_t$. One knows from \cite{MR2898886} that the global mean speed is unique if it exists, and it is independent of the choice of families $(\Omega_t^{\pm})_{t\in\mathbb{R}}$ and $(\Gamma_t)_{t\in\mathbb{R}}$. Also by \eqref{transition4}, the shapes of interfaces $\Gamma_t$ reflect the profiles of transition fronts in some sense.

Since interfaces of a transition front can be selected as its level sets, we know from what has been mentioned earlier that in the homogeneous case, there are various possible shapes of interfaces, such as conical shape, V-shape, pyramidal shape, and so on. Recently, Guo and Wang \cite{preprint1}  constructed transition fronts by mixing finitely many planar fronts for which interfaces are polytope-like and changing in time. It is worth mentioning that Ninomiya and Taniguchi \cite{NT2024} also studied polyhedral entire solutions in a different way.

In this paper, we focus on spatially periodic case and aim to show their interfaces could also be polytope-like but not changing in time for our case. The transition fronts constructed by us are more like pyramidal fronts in the homogeneous case. The idea is still mixing several planar fronts as in the homogeneous case, but pulsating fronts instead in spatially periodic case. However, because the profiles and speeds of pulsating fronts are anisotropic unlike planar fronts in homogeneous case, we have to overcome more difficulties and the existence of ploytope-like transition fronts has to be restricted under some conditions.

 We now recall the definition of pulsating front by referring to \cite{MR1900178,MR850456}.

\begin{definition}\label{pulsating-def}
Let $\mathbb{T}^N=[0,L_1]\times[0,L_2]\times\cdots\times[0,L_N]$ be a periodic cell. A pair $(U_e,c_e)$ with $U_e : \mathbb{R}\times\mathbb{T}^N\to\mathbb{R}$ and $c_e\in\mathbb{R}$ is said to be a pulsating front of (\ref{1.1}) with effective speed $c_e$ in the direction $e\in\mathbb{S}^{N-1}$ connecting 0 and 1 if the following conditions are satisfied:

(i) The map $u(t,x):=U_e(x\cdot e-c_et,x)$ is an entire (time-global) classical solution of the equation (\ref{1.1}).

(ii) For every $\xi\in\mathbb{R}$, the profile $U_e(\xi,x)$ is L-periodic with respect to x and satisfies
\begin{equation*}
\lim\limits_{\xi\to+\infty}U_e(\xi,x)=0,\lim\limits_{\xi\to-\infty}U_e(\xi,x)=1,\text{ uniformly for }x\in\mathbb{R}^N.
\end{equation*}
\end{definition}

For the existence of pulsating fronts, we refer to \cite{MR3689331,MR3552256,MR3240934,MR3420507, MR4656878, MR2526414, MR1129560} for some existence results, and refer to \cite{MR3689331,MR1251222,MR3724753} for some nonexistence results.

In dimension $2$,  Guo et al. \cite{MR4334733} have studied curved fronts of \eqref{1.1} when $A(x)\equiv I$.
They constructed curved fronts by mixing two pulsating fronts coming from two different directions under some conditions on variation of speed $c_e$ with respect to directions $e$. Such conditions are shown to be nearly necessary.

In this paper, we extend the results of \cite{MR4334733} to dimensions $N\ge 3$ by mixing finitely many pulsating fronts. As in \cite{MR4334733}, we need to assume a priori that
\begin{itemize}
\item[(H1)] $\int_{\mathbb{T}^N\times[0,1]}f(x,u)dxdu>0$,

\item[(H2)] For every direction $e\in\mathbb{S}^{N-1}$, the equation (\ref{1.1}) admits a moving pulsating front $U_e(x\cdot e-c_et,x)$ with speed $c_e\neq 0$.
\end{itemize}
This immediately implies $c_e>0$, since the speed $c_e$ has the same sign of $\int_{\mathbb{T}^N\times[0,1]}f(x,u)dxdu$ under the above assumptions, see \cite{MR3552256, MR3772873}. We point out that (H1) is not lacking generality. If $\int_{\mathbb{T}^N\times[0,1]}f(x,u)dxdu<0$, one can deduce similar results by considering  $\widetilde{U}_e(\xi,x)=1-U_e(-\xi,x)$ instead of $U_e(\xi,x)$ and considering $\widetilde{U}_e(\xi,x)$ with speed $-c_e$. In addition, it follows from \cite{MR2898886} and \cite{MR3772873} that the propagation speed $c_e$ and the profile $U_e(\xi,x)$ are unique (up tp shifts), and it satisfies  $\partial_{\xi}U_e(\xi,x)<0$ for every $e\in\mathbb{S}^{N-1}$. We normalize $U_e(\xi,x)$ by $\int_{\R^+\times\mathbb{T}^N} U_e^2(\xi,y)dyd\xi=1$ for all $e\in\mathbb{S}^{N-1}$ and we always assume that (A1)-(A4) and (H1)-(H2) hold throughout this paper.

For convenience of narration, we introduce some geometric notations in high dimensional spaces.  Take $e_0\in\mathbb{S}^{N-1}$ and $n\geq 2$ unit vectors $e_i\in\mathbb{S}^{N-1}$, $i=1,2\cdots,n$ such that $e_i\neq e_j$ for any $i\neq j$ and $e_i\cdot e_0>0$ for all $i\in\{1,\cdots,n\}$. Let $Q_i$ be the hyperplane determined by $e_i$, that is,
$$Q_i=\{x\in\mathbb{R}^N;\, x\cdot e_i=0\}.$$
Since $e_i\neq e_j$ for $i\neq j$, one knows that $Q_i$ and $Q_j$ are not parallel. Let $\mathcal{Q}$ be the polytope enclosed by $Q_1$, $Q_2$, $\cdots$, $Q_n$, that is, 
$$\mathcal{Q}=\{x\in\mathbb{R}^N;\, \min\limits_{1\leq i\leq n}x\cdot e_i> 0\}.$$
This polytope is unbounded since $e_i\cdot e_0>0$ for all $i\in\{1,\cdots,n\}$.
Let $\partial\mathcal{Q}=\{x\in\mathbb{R}^N;\, \min\limits_{1\leq i\leq n}x\cdot e_i= 0\}$ be the boundary of $\mathcal{Q}$. The joint part of $\partial\mathcal{Q}$ and $Q_i$ is called a facet of the polytope, denoted by $\widetilde{Q}_i=\partial\mathcal{Q}\cap Q_i$. Let $\mathcal{R}_{ij}=\widetilde{Q}_i\cap\widetilde{Q}_j$ be the ridges for $i\neq j$, which represents the intersection of $\widetilde{Q}_i$ and $\widetilde{Q}_j$. Let $\mathcal{R}$ be the set of all ridges of $\mathcal{Q}$.

Now we are ready to present our results. By (H2), there exists a pulsating front in the direction $e_i\in \mathbb{S}^{N-1}$ for every $i\in \{1,\cdots ,n\}$, that is, $U_{e_i}(x\cdot e_i-c_{e_i}t,x)$.
For $n$ vectors $e_i$ ($i=1,\cdots,n$) of $\mathbb{S}^{N-1}$, define
\begin{equation}\label{subsolution}
\underline{V}(t,x):=\max\limits_{1\leq i\leq n} \{U_{e_i}(x\cdot e_i-c_{e_i}t,x)\},
\end{equation}
which is a subsolution of  \eqref{1.1} by the comparison principle for parabolic equations. Our first result shows the existence of a curved front which converges to pulating fronts along its asymptotic planes under some sufficient conditions. This curved front is also a transition front with unchanging interfaces as $t$ varies.

\begin{theorem}\label{thm-existence}
For any $\{e_i\}_{i=1}^n$ of $\mathbb{S}^{N-1}$ such that
\begin{itemize}
\item[(i)] $e_i\cdot e_0>0$ for a fixed $e_0\in\mathbb{S}^{N-1}$ and $e_i\neq e_j$ for $i\neq j$,

\item[(ii)]  $\hat{c}:=g(e_i)\equiv \hbox{constant}$ for any $i\in\{1,\cdots,n\}$, where $g(x):=c_{\frac{x}{|x|}}/(\frac{x}{|x|}\cdot e_0)$ for $x\in \mathbb{R}^N \setminus \{0\}$,

\item[(iii)] $\hat{c}>g(e)$   for $e\in \mathcal{L}(\mathcal{Q})\setminus\{e_i\}_{i=1}^n$,  where  $\mathcal{L}(\mathcal{Q}):=\{e\in\mathbb{S}^{N-1}; x\cdot e\geq 0\text{ for all }x\in \mathcal{Q}\}$,

\item[(iv)] $\nabla g(e_i)\cdot e_j<0$ for every $i\neq j$,
\end{itemize}
there exists a transition front $V(t,x)$ of \eqref{1.1} with  $\Gamma_t$, $\Omega_t^{\pm}$ given by
\be\label{TF-sets} 
\Gamma_t=\partial\mathcal{Q}+\hat{c} t e_0,\quad \Omega_t^-=\mathcal{Q}+\hat{c} t e_0,\quad \Omega_t^+=\R^N\setminus \overline{\mathcal{Q}}  +\hat{c} t e_0,
\ee
satisfying $V_t(t,x)>0$ for all $(t,x)\in\mathbb{R}\times\mathbb{R}^N$ and
\begin{equation}\label{thm-existence-1}
|V(t,x)-\underline{V}(t,x)|\to 0,\text{ uniformly as }d((t,x),\mathcal{R}+\hat{c}t e_0)\to +\infty.
\end{equation}
Moreover, such a transition front is unique in the sense that if an entire solution $\widetilde{V}(t,x)$ satisfies \eqref{thm-existence-1}, then $\widetilde{V}(t,x)\equiv V(t,x)$.
\end{theorem}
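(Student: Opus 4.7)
My plan is: (1) obtain $V$ as the monotone limit of Cauchy problems initiated from the subsolution $\underline V$; (2) trap this limit between $\underline V$ and a carefully chosen supersolution $\overline V$ whose decay enforces \eqref{thm-existence-1}; (3) derive uniqueness from a bistable-stability argument. The second step, the construction and verification of $\overline V$, will be the main technical obstacle.

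\emph{Monotone construction from below.} Because $\partial_\xi U_{e_i}<0$ and $c_{e_i}>0$, each entire solution $(t,x)\mapsto U_{e_i}(x\cdot e_i-c_{e_i}t,x)$ is strictly increasing in $t$, so $\underline V$ is non-decreasing in $t$ and (as a max of solutions) is a generalised subsolution of \eqref{1.1}. Hypothesis (ii) makes the level-set motion rigid: writing $y:=x-\hat c t e_0$, the identity $U_{e_i}(x\cdot e_i-c_{e_i}t,x)=U_{e_i}(y\cdot e_i,\,y+\hat c t e_0)$ shows that every level set translates by $\hat c e_0$ modulo the $L$-periodic spatial dependence. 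For $n\in\N$, let $v_n$ solve \eqref{1.1} on $[-n,\infty)\times\R^N$ with $v_n(-n,\cdot)=\underline V(-n,\cdot)$. The parabolic comparison principle gives $v_n\geq\underline V$ and $v_{n+1}\geq v_n$, so $V:=\lim_n v_n$ exists and is an entire classical solution of \eqref{1.1} with $\underline V\leq V\leq 1$ and $V_t\geq 0$; the strong maximum principle applied to $V_t$ upgrades this to $V_t>0$ on $\R\times\R^N$.

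\emph{Supersolution --- the principal obstacle.} To prevent $V\equiv 1$ and to enforce \eqref{thm-existence-1}, I will try the product-complement ansatz
\begin{equation*}
\overline V(t,x)\;=\;1-\prod_{i=1}^n\bigl(1-U_{e_i}(x\cdot e_i-c_{e_i}t-\zeta_i,\,x)\bigr),
\end{equation*}
possibly augmented by an exponentially decaying time correction, with shifts $\zeta_i\in\R$ tuned to enforce the supersolution inequality near the ridges. The bound $\overline V\geq\max_i U_{e_i}=\underline V$ is immediate since each factor $1-U_{e_j}$ lies in $[0,1]$. Expanding $\partial_t\overline V-\nabla\!\cdot\!(A\nabla\overline V)-f(x,\overline V)$ reduces the verification to controlling cross terms of the form $A\nabla U_{e_i}\cdot\nabla U_{e_k}\prod_{j\neq i,k}(1-U_{e_j})$ and a nonlinear remainder. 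In the interior of each facet $\widetilde Q_i$ only $U_{e_i}$ is of order unity, the other factors are exponentially small, and the bistable dissipation $-f_u\geq\kappa$ from (A3) absorbs the residual. The real difficulty lies in a neighbourhood of the ridges $\mathcal R$, where two or more $U_{e_j}$ are simultaneously active. Hypothesis (iv), $\nabla g(e_i)\cdot e_j<0$, is essential here: it makes $\hat c=g(e_i)$ a strict directional maximum of $g$ toward each neighbouring $e_j$, so $U_{e_j}$ ``lags'' $U_{e_i}$ by a definite angular amount near $\mathcal R_{ij}$, which is precisely what keeps the two factors $(1-U_{e_i})$ and $(1-U_{e_j})$ from simultaneously saturating and allows the sign of the cross terms to be controlled. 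Hypothesis (iii) simultaneously rules out fictitious saturations coming from other directions $e\in\mathcal L(\mathcal Q)\setminus\{e_i\}_{i=1}^n$. The quantitative verification rests on exponential decay estimates for $U_{e_i}(\xi,\cdot)$ as $\xi\to\pm\infty$ (available from the bistable structure and the $-f_u\geq\kappa$ bound) and is where almost all of the work will go.

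\emph{Squeezing and uniqueness.} Once $\overline V$ is in hand, comparison of initial data at time $-n$ gives $v_n\leq\overline V$, hence $V\leq\overline V$; combined with $\underline V\leq V$ and the matching asymptotics of $\underline V$ and $\overline V$, this yields \eqref{thm-existence-1} and identifies $V$ as a transition front with $\Gamma_t,\Omega_t^\pm$ as in \eqref{TF-sets}. For uniqueness, let $\widetilde V$ be another entire solution satisfying \eqref{thm-existence-1}. Outside a bounded tube around $\mathcal R+\hat c t e_0$, both $V$ and $\widetilde V$ are uniformly close to the same pulsating fronts $U_{e_i}$, so $\|V(t,\cdot)-\widetilde V(t,\cdot)\|_{L^\infty(\R^N\setminus\text{tube})}$ is arbitrarily small. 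A standard sliding/contraction argument, driven by the bistable damping $-f_u\geq\kappa$ near $0$ and $1$ and the strict monotonicity $V_t>0$ established in Step 1, then propagates the smallness to all of $\R^N$; optimising the shift parameter and sending it to zero forces $\widetilde V\equiv V$.
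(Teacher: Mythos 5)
The product--complement ansatz
$\overline V=1-\prod_{i=1}^n\bigl(1-U_{e_i}(x\cdot e_i-c_{e_i}t-\zeta_i,x)\bigr)$
is not a supersolution of a bistable equation, and hypotheses (iii)--(iv) cannot be used to rescue it in the way you suggest. Already for two homogeneous planar fronts $\phi_1,\phi_2$ one finds
\begin{equation*}
\partial_t\overline V-\Delta\overline V=(1-\phi_2)f(\phi_1)+(1-\phi_1)f(\phi_2)+2\nabla\phi_1\cdot\nabla\phi_2,
\end{equation*}
and near a ridge where $\phi_1\approx\phi_2\approx\theta$ one has $f(\phi_i)\approx0$, while $\overline V\approx\theta(2-\theta)>\theta$ so $f(\overline V)>0$; moreover the cross term $2\nabla\phi_1\cdot\nabla\phi_2$ has no favourable sign. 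Thus $N\overline V\ge0$ fails in exactly the region you identify as the difficulty. No choice of the shifts $\zeta_i$ cures this, because the obstruction is not a translation mismatch but the failure of $f$ to be concave/sublinear in $u$ on $[0,1]$. Conditions (iii)--(iv) do not enter this ansatz at all: the product construction never evaluates $c_e$ or $g$ at directions other than the fixed $e_i$, so the ``directional maximum'' property of $\hat c$ has no place to act. In short, you would be unable to close Step (2), and hence Steps (1) and (3) have nothing to squeeze against.

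The paper instead builds the supersolution from a \emph{single} pulsating front evaluated along the normals of a smooth convex interpolating surface: $\overline V=\min\{U_{e(x)}(\overline\xi(t,x,y),x,y)+\varepsilon h(\alpha x),1\}$, where $y=\varphi(\alpha x)/\alpha$ solves $\sum_i e^{-q_i(x,y)}=1$, $e(x)$ is the outward unit normal, $\overline\xi$ is the scaled signed distance, and $h$ decays exponentially away from the ridges. The role of (iii)--(iv) is then precisely to produce the drift estimate $-\overline\xi_t-c_{e(x)}\ge Ch(\alpha x)$ (Lemma~\ref{lemma3.1} in the paper): since $e(x)=\sum_i\tau_i(x)e_i\in\mathcal L(\mathcal Q)$, the Taylor expansion $g(e_i)-g(e(x))=-\sum_{j\ne i}\tau_j\nabla g(e_i)\cdot e_j+o(h)$ is positive and of order $h(\alpha x)$ exactly because of (iv), with (iii) covering the bounded-ridge region. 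That drift, combined with $-\partial_\xi U_{e}\ge r>0$ in the transition layer and the bistable damping $-f_u\ge\kappa$ near $0$ and $1$, absorbs all error terms and gives $N\overline V\ge0$. This is the ingredient your proposal is missing. Your monotone limit and the overall uniqueness strategy (sliding via a sub-/super-invasion comparison principle as in Lemma~\ref{pulsating-lemma-5}) are in the right spirit, but without a valid $\overline V$ they do not produce the theorem.
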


Here are some comments on conditions (i)-(iv) of Theorem~\ref{thm-existence}. The condition (i) ensures that the solution $V(t,x)$ is a transition front propagating in the direction $e_0$. The condition (ii) implies that pulsating fronts $U_{e_i}(x\cdot e_i -c_{e_i} t,x)$ have the same speed $\hat{c}=c_{e_i}/(e_i\cdot e_0)$ in the direction $e_0$. This is why they can glue together. The condition (iii) ensures that the interfaces of $V(t,x)$ can keep the shape of $\partial\mathcal{Q}$ as $t$ varies. We can actually show that (ii)-(iii) are necessary conditions for the existence of curved fronts satisfying \eqref{thm-existence-1} by the same strategy as for the 2-dimensional case, see \cite{MR4334733}. The condition (iv) is a technical assumption. We are not sure whether the conclusion is true or not when (iv) is losing. We will prove later that $c_e$ is differentiable with respect to $e\in\mathbb{S}^{N-1}$ and therefore, one can deduce that $g(x)$ is differentiable at $e_i$ with respect to $x\in\mathbb{R}^N\setminus\{0\}$. So that, (iv) is well-defined. Furthermore, the set of admissible speeds $c_e$ is shown to be rather large, and it is even conjectured to be a set of all continuous sign-unchanging functions with respect to $e\in\mathbb{S}^{N-1}$ in \cite{MR4288217}. This means that (iv)  could be easily satisfied.

In homogeneous case, the speed $c_f$ of planar front is uniquely determined by the reaction term $f$ and invariant with respect to the direction. Thus, if $e_i\cdot e_0$ is equivalent to a positive constant  for all $i\in\{1,\dots,n\}$, then conditions (ii)-(iv) are automatically satisfied. In this case, the curved front is the pyramidal front in \cite{MR2318388,MR2494701}. If we take $N=2$, $e_0=(0,1)$, $e_1=(\cos\alpha,\sin\alpha)$, $e_2=(\cos\beta,\sin\beta)$ with $0<\alpha<\beta<\pi$ in Theorem~\ref{thm-existence}, then it turns to the curved front in dimension $2$ and it is not difficult to show that conditions (i)-(iv) are equivalent to those in \cite[Thorem~1.2]{MR4334733}. 

%

Next, we show that the curved front $V(t,x)$ is  asymptotically stable.

\begin{theorem}\label{thm-stability}
Assume that (i)-(iv) of Theorem~\ref{thm-existence} hold. For the Cauchy problem of equation (\ref{1.1}), that is,
\begin{equation}\label{Chaucy-problem}
\left\{
\begin{aligned}
&u_t-\nabla\cdot(A(x)\nabla u)=f(x,u), &&t>0,x\in\mathbb{R}^N,\\
&u(0,x)=u_0(x),  && x\in\mathbb{R}^N,
\end{aligned}
\right.
\end{equation}
if the initial value $u_0(x)$ satisfies $0\leq u_0(x)\leq 1$ and
\begin{equation}\label{Chaucy-problem-initialvalue}
|u_0(x)-\underline{V}(0,x)|\to 0,\text{ uniformly as }d(x,\mathcal{R})\to +\infty,
\end{equation}
then the solution $u(t,x)$ of \eqref{Chaucy-problem} satisfies
\begin{equation*}
\lim\limits_{t\to+\infty}||u(t,x)-V(t,x)||_{L^{\infty}(\mathbb{R}^N)}=0.
\end{equation*}
\end{theorem}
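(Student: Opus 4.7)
The strategy is a Fife--McLeod style squeeze argument combined with the uniqueness from Theorem~\ref{thm-existence}. I plan to construct a pair of sub- and supersolutions
\[
V^{\pm}(t,x) := V\bigl(t \pm (\tau + \zeta(t)), x\bigr) \pm q(t),
\]
where $q(t) = \delta e^{-\beta t}$ for small $\delta>0$ and $\beta\in(0,\kappa)$, and $\zeta(t) := K\!\int_0^t q(s)\,ds$ for a suitably large $K>0$. Standard Fife--McLeod computations then show that $V^+$ (respectively $V^-$) is a supersolution (respectively subsolution) of \eqref{1.1}, provided (i) $V_t>0$, (ii) $-f_u\ge\kappa$ in the stable layers, and (iii) a uniform positive lower bound $V_t\ge v_0>0$ on the transition zone $\{(t,x):\sigma\le V(t,x)\le 1-\sigma\}$. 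Items (i) and (ii) come from Theorem~\ref{thm-existence} and (A3) respectively; for (iii), I would use \eqref{thm-existence-1} together with the approximation $V(t,x)\approx U_{e_i}(x\cdot e_i-c_{e_i}t,x)$ inside the tube around each facet, which gives $V_t\approx -c_{e_i}\partial_\xi U_{e_i}>0$ there, plus parabolic Harnack/Schauder estimates near the ridges where several $U_{e_i}$ compete.

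For the initial comparison $V^-(0,x)\le u_0(x)\le V^+(0,x)$, I would argue in two regions. Applying the triangle inequality to \eqref{thm-existence-1} at $t=0$ and to the hypothesis \eqref{Chaucy-problem-initialvalue} yields $|u_0(x)-V(0,x)|\le\delta/2$ for all $x$ with $d(x,\mathcal{R})>R(\delta)$; since $V_t>0$, this already covers the region far from the ridges for any $\tau\ge 0$. On the remaining region $\{d(x,\mathcal{R})\le R(\delta)\}$, for large $\tau>0$ every such $x$ lies in $\Omega_{\tau}^+$ and in $\Omega_{-\tau}^-$, with $d(x,\Gamma_{\pm\tau})\to+\infty$ uniformly in $x$ as $\tau\to+\infty$, so the transition property \eqref{transition4} applied to $V$ forces $V(\tau,x)\ge 1-\delta$ and $V(-\tau,x)\le\delta$ there. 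A mild subtlety is that the ridges of $\mathcal{Q}$ are unbounded for $N\ge 3$; along each ridge direction, the required uniformity follows from the pulsating-front asymptotics that $V$ inherits from \eqref{thm-existence-1} on adjacent facets.

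Once the sandwich $V^-(0,x)\le u_0(x)\le V^+(0,x)$ holds, the parabolic comparison principle produces $V^-(t,x)\le u(t,x)\le V^+(t,x)$ for all $t\ge 0$ and $x\in\R^N$. Since $q(t)\to 0$ and $\zeta(t)\to\zeta_\infty=K\delta/\beta$, this bounds the asymptotic defect only by time-shifts of $V$ of amplitude $\tau+\zeta_\infty$, which does not a priori vanish. The main obstacle, which I expect to be the most delicate step, is removing this residual shift to obtain convergence to $V(t,\cdot)$ itself. My plan is to iterate: at a sufficiently large time $T$, the bounds above combined with \eqref{thm-existence-1} show that $u(T,\cdot)-\underline{V}(T,\cdot)$ is as small as $q(T)$ outside a tubular neighbourhood of $\mathcal{R}+\hat{c}Te_0$, so one may restart the sub/supersolution construction from $T$ with a strictly smaller admissible pair $(\tau',\delta')$; iterating drives $\tau'+\zeta_\infty'\to 0$. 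As a backstop, I would pass to an $\omega$-limit of $u$ along sequences $t_n\to+\infty$ combined with suitable $L$-periodic spatial translations following the motion of the interface, and verify via the a priori sandwich that any such limit satisfies the defining property \eqref{thm-existence-1}; by the uniqueness part of Theorem~\ref{thm-existence} it must then coincide with $V$, yielding $\|u(t,\cdot)-V(t,\cdot)\|_{L^{\infty}(\R^N)}\to 0$.
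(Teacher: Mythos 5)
Your overall strategy — a Fife--McLeod style squeeze followed by removal of the residual time-shift via uniqueness — is the same as the paper's, but there are two genuine gaps in how you set it up.

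First, you apply the squeeze construction directly to $V$, which requires a uniform lower bound $V_t \ge v_0 > 0$ over the entire (unbounded, moving) transition zone $\{\sigma \le V \le 1-\sigma\}$. This is not established by Theorem~\ref{thm-existence}, which only gives $V_t>0$ pointwise, and your sketch (approximation by $U_{e_i}$ away from the ridges plus "Harnack near the ridges") is not filled in: the ridges are unbounded in $N\ge 3$, and a local Harnack inequality does not by itself give a bound uniform along the ridges and over all $t\in\mathbb{R}$ unless one first exploits the exact space--time periodicity $V(t+L_N/\hat c,\,x,\,y+L_N)=V(t,x,y)$ to compactify. The paper avoids this issue entirely: it applies the Fife--McLeod type construction (Lemma~\ref{stab-lemma3}) not to $V$ itself, but to the explicit barriers $\overline V$ (from Lemma~\ref{supV}) and a new family of subsolutions $\underline V_i$ built from tilted polytopes $\mathcal Q_i$ (Lemmas~\ref{lemma4.1}, \ref{stab-subsolution}). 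For those barriers the required lower bound on the time derivative follows directly from $-\partial_\xi U_e\ge r>0$ on compact $\xi$-intervals (Lemma~\ref{pulsating-lemma-2}) and the explicit form of $\overline\xi_t$, $\underline\xi_t$, with no extra analysis on $V$. Constructing $\underline V_i$ is exactly the technical content that your sketch replaces by the unproven bound on $V_t$.

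Second, your "iteration" for removing the residual shift does not work as stated. After the squeeze you only know $V(t-\tau-\zeta_\infty,x)-q(t)\le u(t,x)\le V(t+\tau+\zeta_\infty,x)+q(t)$; far from $\mathcal R + \hat c t e_0$ this is a spatial translate of the pulsating profile by an amount $O(\tau+\zeta_\infty)$, not an $O(q(T))$ perturbation of $\underline V(T,\cdot)$, so restarting at time $T$ does not shrink $\tau$. Your "backstop" — taking an $\omega$-limit along $t_n\to+\infty$ together with spatial translations and identifying the limit via the uniqueness in Theorem~\ref{thm-existence} — is not a fallback but the essential step, and the paper carries it out concretely by choosing the exact space--time period $t_n=nL_N/\hat c$, translation $y\mapsto y+nL_N$, under which $\overline V$ and all $\underline V_i$ are invariant; this invariance is what lets the squeeze inequalities pass to a limiting entire solution $u_\infty$, which is then pinned to $V$ (up to $\omega\delta$, then $\delta\to 0$) by comparing with the Cauchy evolutions started from $\overline V$ and $\max_i\underline V_i$ and using uniqueness. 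You would need to make this periodicity and limiting argument explicit; the iteration alone cannot close the proof.
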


Finally, we point out that if the inequalities in (iii)-(iv) of Theorem~\ref{thm-existence} are reversed, one can still construct transition fronts of \eqref{1.1} propagating in the direction $e_0$ with speed $\hat{c}$, but the interfaces $\Gamma_t$ are  $-\partial \mathcal{Q}+\hat{c} t e_0$. 

\begin{theorem}\label{coro}
For any n vectors $\{e_i\}_{i=1}^n$ of $\mathbb{S}^{N-1}$ satisfying
\begin{itemize}
\item[(i)] $e_i\cdot e_0>0$ for a fixed $e_0\in\mathbb{S}^{N-1}$ and $e_i\neq e_j$ for $i\neq j$,

\item[(ii)]  $\hat{c}:=g(e_i)\equiv \hbox{constant}$ for any $i\in\{1,\cdots,n\}$,

\item[(iii)] $\hat{c}<g(e)$   for $e\in \mathcal{L}(\mathcal{Q})\setminus\{e_i\}_{i=1}^n$, 

\item[(iv)] $\nabla g(e_i)\cdot e_j>0$ for every $i\neq j$,
\end{itemize}
there exists a unique transition front $W(t,x)$ of \eqref{1.1} with  $\Gamma_t$, $\Omega_t^{\pm}$ given by
\be\label{TF-sets2} 
\Gamma_t=-\partial\mathcal{Q}+\hat{c} t e_0,\quad  \Omega_t^+=-\mathcal{Q}+\hat{c} t e_0,\quad \Omega_t^-=-\R^N\setminus \overline{\mathcal{Q}}  +\hat{c} t e_0,
\ee
 satisfying $W_t(t,x)>0$ for all $(t,x)\in\mathbb{R}\times\mathbb{R}^N$ and
\begin{equation}\label{coro-1}
	|W(t,x)-\overline{W}(t,x)|\to 0,\text{ uniformly as }d((t,x),-\mathcal{R}+\hat{c}t e_0)\to +\infty,
\end{equation}
where $\overline{W}(t,x):=\min\limits_{1\leq i\leq n} \{U_{e_i}(x\cdot e_i-c_{e_i}t,x)\}$, which is a supersolution of \eqref{1.1}.
\end{theorem}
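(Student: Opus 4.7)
The plan is to prove Theorem~\ref{coro} by dualising the proof of Theorem~\ref{thm-existence}, exchanging the roles of sub- and super-solutions. First, I would observe that $\overline{W}(t,x)=\min_{1\leq i\leq n}U_{e_i}(x\cdot e_i-c_{e_i}t,x)$ is indeed a supersolution of \eqref{1.1}: each $U_{e_i}(x\cdot e_i-c_{e_i}t,x)$ is an entire classical solution, and the pointwise minimum of finitely many classical solutions to a semilinear parabolic equation with globally Lipschitz reaction is a viscosity (hence comparison-admissible) supersolution. Moreover $\partial_t U_{e_i}=-c_{e_i}\partial_\xi U_{e_i}>0$, so $\overline{W}$ is strictly increasing in $t$, with $\overline{W}(t,\cdot)\to 0$ as $t\to-\infty$ and $\overline{W}(t,\cdot)\to 1$ as $t\to+\infty$ locally uniformly. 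The target interface $\Gamma_t=-\partial\mathcal{Q}+\hat{c}te_0$ is precisely the locus where the level $\overline{W}=1/2$ concentrates.

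The key step is to produce a matching subsolution $\underline{W}(t,x)\leq\overline{W}(t,x)$ with $|\underline{W}-\overline{W}|\to 0$ as $d((t,x),-\mathcal{R}+\hat{c}te_0)\to+\infty$. The reversed conditions are tailored to this construction: (iii) $\hat{c}<g(e)$ on $\mathcal{L}(\mathcal{Q})\setminus\{e_i\}_{i=1}^n$ selects the $e_i$ as the only admissible asymptotic directions for the interface, while (iv) $\nabla g(e_i)\cdot e_j>0$ ensures that shifted neighbouring profiles interact along the ridges in a sign-favourable way. Concretely, I would look for $\underline{W}$ of piecewise form $U_{e_{i^*}}(x\cdot e_{i^*}-c_{e_{i^*}}t-\varphi_{i^*}(x),x)$, where $i^*$ indexes the facet closest to $x$ and each correction $\varphi_i$ is designed, via the differentiability of the map $e\mapsto c_e$, to cancel the first-order interaction term at the adjacent ridges. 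The subsolution inequality is then verified region-by-region: far from the ridges $\underline{W}$ coincides, up to exponentially small error, with a single pulsating front, while near a ridge the contribution coming from $\nabla g(e_i)\cdot e_j>0$ dominates the remaining quadratic error.

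With $\underline{W}\leq\overline{W}$ in hand, $W$ is constructed by a monotone iteration. For $t_n\to-\infty$, let $u_n$ solve the Cauchy problem \eqref{Chaucy-problem} with $u_n(t_n,\cdot)=\overline{W}(t_n,\cdot)$. Comparison with $\overline{W}$ and $\underline{W}$ (once one has $\underline{W}(t_n,\cdot)\leq\overline{W}(t_n,\cdot)$) gives $\underline{W}\leq u_n\leq\overline{W}$ on $[t_n,\infty)\times\mathbb{R}^N$, and $\{u_n\}$ is monotone decreasing in $n$ because, for $t_n>t_m$, $u_m(t_n,\cdot)\leq\overline{W}(t_n,\cdot)=u_n(t_n,\cdot)$. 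Parabolic estimates then produce a classical entire solution $W(t,x)=\lim_n u_n(t,x)$ trapped between $\underline{W}$ and $\overline{W}$; this trapping yields both \eqref{coro-1} and the transition-front structure \eqref{TF-sets2}. Strict monotonicity $W_t>0$ follows from the strong maximum principle applied to the nonnegative limit $W_t$, and uniqueness is obtained by a sliding argument: given $\widetilde{W}$ satisfying \eqref{coro-1}, the common asymptotic behaviour permits one to sandwich $\widetilde{W}$ between two time-translates of $W$, and then to let the shift tend to zero using $W_t>0$, exactly as in the uniqueness part of Theorem~\ref{thm-existence}.

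The main obstacle is the subsolution construction. In Theorem~\ref{thm-existence} the subsolution $\underline{V}=\max U_{e_i}$ is free of charge because the maximum of solutions is automatically a subsolution, but no such free counterpart exists when $\min U_{e_i}$ is taken as the supersolution. One must exploit differentiability of $c_e$ in $e$ and the sign condition (iv) at every ridge, and a further subtlety specific to the spatially periodic setting is that the profiles $U_{e_i}(\xi,x)$ are not translation-invariant in $x$: the shift corrections $\varphi_i$ must be compatible with the $L$-periodic dependence of $U_{e_i}$ on $x$, and the interaction estimates at the ridges must be made uniform across translates of the periodic cell $\mathbb{T}^N$.
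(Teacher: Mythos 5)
Your overall architecture is the same as the paper's: recognize $\overline{W}=\min_i U_{e_i}$ as a supersolution, build a matching subsolution $\underline{W}\le\overline{W}$ that agrees with $\overline{W}$ away from the ridges, run a decreasing monotone iteration from $\overline{W}(-n,\cdot)$, trap $W$ between $\underline{W}$ and $\overline{W}$, get strict monotonicity from the strong maximum principle (the paper invokes the invasion-monotonicity result of Berestycki--Hamel), and prove uniqueness by the sliding argument of Lemma~\ref{pulsating-lemma-5}. You also correctly identify the genuine asymmetry with Theorem~\ref{thm-existence}: there is no "free" subsolution dual to $\max_i U_{e_i}$.

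However, the concrete subsolution you sketch does not actually work, and this is exactly the part the paper labors over. You propose a piecewise function $U_{e_{i^*}}(x\cdot e_{i^*}-c_{e_{i^*}}t-\varphi_{i^*}(x),x)$ with $i^*$ the nearest facet and shift corrections $\varphi_i$ chosen to cancel first-order ridge interactions. Two problems. First, a piecewise-defined function is not automatically a subsolution across the switching set; you would need the pieces to be individual subsolutions and take a maximum, but a pulsating front with its argument shifted by a nonconstant $\varphi_i(x)$ ceases to solve the equation, and there is no reason the error has a favorable sign. Second, keeping the direction fixed at $e_{i^*}$ on each region throws away exactly the geometric information near the ridges that drives the sign condition (iv). The paper instead builds a \emph{single smooth} subsolution: it takes the convex surface $y=\varphi(x)$ defined by $\sum_i e^{-q_i}=1$ (Subsection~\ref{subs:surface}), reflects and rescales it to $y=-\varphi(-\alpha x)/\alpha$, uses the \emph{continuously varying} unit normal $e(-x)$ to select the pulsating front direction pointwise, evaluates $U_{e(-x)}$ at a signed normal distance $\underline{\xi}(t,x,y)$, and subtracts a lower-order perturbation $\varepsilon h(-\alpha x)$ decaying away from the ridges; see \eqref{eq:subsolution}--\eqref{uxilxi}. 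The subsolution inequality then reduces to the key differential inequality $\underline{\xi}_t+c_{e(-x)}\ge Ch(-\alpha x)$ of Lemma~\ref{3.3}, which is where (iii), (iv), the identity $\nabla g(e_i)\cdot e_i=0$, and the second-order Fr\'echet differentiability of $e\mapsto(U_e,c_e)$ (Lemma~\ref{pulsating-lemma-3}) all enter. Your sketch reaches for the right ingredients (differentiability of $c_e$, sign of $\nabla g(e_i)\cdot e_j$) but not the right construction, and the missing smooth mollified interface plus perturbation term is precisely the nontrivial content of this theorem.

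One smaller inaccuracy: you do not need to, and the paper does not, separately prove $\underline{W}\le\overline{W}$ by an elementary argument; this comparison is itself obtained from the sliding Lemma~\ref{pulsating-lemma-5} applied facet by facet (Lemma~\ref{3.4}), because the exponential-decay estimates guarantee a finite optimal shift $T_{\min}$ and the linear parabolic estimate forces $T_{\min}=0$.
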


The stability of the curved front $W(t,x)$ can also be deduced. 


\begin{theorem}\label{coro-stability}
Assume that (i)-(iv) of Theorem~\ref{coro} hold. For the Cauchy problem \eqref{Chaucy-problem}, if the initial value $u_0(x)$ satisfies $0\leq u_0(x)\leq 1$ and
\begin{equation}\label{Chaucy-problem-initialvalue2}
|u_0(x)-\underline{W}(0,x)|\to 0,\text{ uniformly as }d(x,-\mathcal{R})\to +\infty,
\end{equation}
then the solution $u(t,x)$ of \eqref{Chaucy-problem} satisfies
\begin{equation*}
\lim\limits_{t\to+\infty}||u(t,x)-W(t,x)||_{L^{\infty}(\mathbb{R}^N)}=0.
\end{equation*}
\end{theorem}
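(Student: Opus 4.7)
The plan is to mirror the Fife--McLeod squeezing strategy used for Theorem~\ref{thm-stability}, with the roles of $0$ and $1$ swapped so that the supersolution $\overline{W}$ of Theorem~\ref{coro} plays the role previously played by $\underline{V}$. The asymptotic condition in \eqref{Chaucy-problem-initialvalue2} is read as $|u_0(x)-\overline{W}(0,x)|\to 0$ as $d(x,-\mathcal{R})\to+\infty$. I will exploit three ingredients supplied by Theorem~\ref{coro}: the monotonicity $W_t(t,x)>0$, the uniform asymptotics $|W(t,x)-\overline{W}(t,x)|\to 0$ as $d((t,x),-\mathcal{R}+\hat{c}t e_0)\to+\infty$, and the pointwise bistable bound $-f_u(x,u)\geq\kappa$ near $u=0$ and $u=1$. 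By the triangle inequality these first two yield equivalently $|u_0(x)-W(0,x)|\to 0$ as $d(x,-\mathcal{R})\to+\infty$.

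\textbf{Step 1 (sectoral stability away from the reversed ridges).} I would prove that for every $\varepsilon>0$ there exist $R>0$ and $T_1>0$ with
\[ |u(t,x)-W(t,x)|\leq\varepsilon \quad\text{whenever}\quad t\geq T_1\ \text{and}\ d(x,-\mathcal{R}+\hat{c}t e_0)\geq R. \]
On a tubular neighborhood of a single reversed facet $-\widetilde{Q}_i+\hat{c}t e_0$ that stays away from all ridges, both $W(t,\cdot)$ and $\overline{W}(t,\cdot)$ coincide up to $o(1)$ with the single pulsating front $U_{e_i}(x\cdot e_i-c_{e_i}t,x)$. The asymptotic hypothesis on $u_0$ therefore places the initial datum in the basin of attraction of $U_{e_i}$ within that sector; the global stability of bistable pulsating fronts (cf.\ \cite{MR3772873}) together with interior parabolic estimates yields the required pointwise bound uniformly in large $t$, and matching across adjacent sectors produces the global estimate outside an $R$-neighborhood of $-\mathcal{R}+\hat{c}t e_0$.

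\textbf{Step 2 (global barriers and matching at $t=T_1$).} Using $W_t>0$ and the $\kappa$-bound on $f_u$, I would construct Fife--McLeod barriers
\[ w^{\pm}(t,x)=W\bigl(t+T_1\pm\xi(t),x\bigr)\pm q(t),\quad q(t)=\delta_0 e^{-\omega t},\quad \xi(t)=\tfrac{K\delta_0}{\omega}\bigl(1-e^{-\omega t}\bigr), \]
with $\delta_0,\omega,K>0$ chosen so that $w^{+}$ (resp.\ $w^{-}$) is a super- (resp.\ sub-)solution of \eqref{1.1}. The verification is routine: where $W$ takes intermediate values one exploits the strict positivity of $W_t$, and where $W$ is close to $0$ or $1$ the $\kappa$-bound on $f_u$ absorbs the $\dot q$ term. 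At time $T_1$, Step 1 gives $w^{-}(T_1,x)\leq u(T_1,x)\leq w^{+}(T_1,x)$ on the far region $\{d(x,-\mathcal{R}+\hat{c}T_1 e_0)\geq R\}$ provided $\delta_0$ and $\xi(0)$ are taken large enough; on the complementary bounded-radius tube one uses $0\leq u\leq 1$ together with the fact that $W$ is uniformly bounded away from $0$ and $1$ on compact sets crossing the interface, together with $W_t>0$ to absorb the gap by an appropriate temporal shift.

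\textbf{Step 3 (squeezing) and the main obstacle.} The comparison principle gives $w^{-}(t,x)\leq u(t,x)\leq w^{+}(t,x)$ for all $t\geq T_1$ and $x\in\mathbb{R}^N$. Since $q(t)\to 0$ and $\xi(t)\to\xi_\infty<+\infty$, we obtain $\|u(t,\cdot)-W(t+T_1\pm\xi_\infty,\cdot)\|_{L^\infty}\to 0$; iterating the argument with $\varepsilon\downarrow 0$ drives $\xi_\infty\to 0$ and establishes $\|u(t,\cdot)-W(t,\cdot)\|_{L^\infty(\mathbb{R}^N)}\to 0$. The main obstacle is genuinely Step 1: one must secure a uniform-in-$t$ sectoral convergence of $u(t,\cdot)$ toward the correct $U_{e_i}$ in each half-space, then glue these convergences coherently across adjacent sectors up to an $R$-tube around the reversed ridges. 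This is precisely where the geometric hypotheses (iii)--(iv) of Theorem~\ref{coro} on $g$ are indispensable, since they guarantee that the reversed polytope geometry is dynamically self-consistent, that $\overline{W}$ faithfully represents $W$ outside such a tube, and that no spurious spreading of the transition layer occurs in time.
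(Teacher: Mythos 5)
Your proposal diverges from the paper's argument in a way that leaves a genuine gap, and the gap is precisely where you place the whole burden of the proof.

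The paper's proof of Theorem~\ref{coro-stability} is by direct analogy with Theorem~\ref{thm-stability}: one takes the explicit subsolution $\underline{W}$ of Lemma~\ref{3.4} and the supersolutions $\overline{W}_i$ constructed just before Theorem~\ref{coro-stability}, applies the Fife--McLeod modification of Lemma~\ref{stab-lemma3} to these (not to $W$ itself), and verifies the ordering at $t=0$ \emph{everywhere}: far from $-\mathcal{R}$ the hypothesis \eqref{Chaucy-problem-initialvalue2} together with $|\underline{W}-\overline{W}|\leq 2\varepsilon$ does the job, while in the $R_\delta$-tube around $-\mathcal{R}$ one invokes Lemmas~\ref{stab-lemma1} and \ref{stab-lemma2} (after taking $\alpha$ small) to make $\underline{W}(0,\cdot)\leq\delta$ and $\min_i\overline{W}_i(0,\cdot)\geq 1-\delta$, so that the Fife--McLeod barriers trap $u_0$ trivially because $0\leq u_0\leq 1$. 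After that, comparison holds for all $t\geq 0$, one passes to a limit $u_\infty$ along the periodically translated times $t_n=nL_N/\hat{c}$, and the uniqueness of the curved front from Theorem~\ref{coro} forces $u_\infty\equiv W$. No stability of pulsating fronts and no a priori large-time control of $u$ is needed anywhere.

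Your Step~1, by contrast, asserts a uniform-in-$t$ sectoral convergence $|u(t,x)-W(t,x)|\leq\varepsilon$ for $t\geq T_1$ and $d(x,-\mathcal{R}+\hat{c}te_0)\geq R$, derived from ``global stability of bistable pulsating fronts (cf. \cite{MR3772873})'' plus ``matching across adjacent sectors.'' This is not a proof but a restatement of (most of) the theorem. The cited reference concerns propagating speeds, not a global $L^\infty$-stability result for pulsating fronts under the present hypotheses; such a stability theorem is not among the paper's assumptions, and even in the half-spaces where a single direction $e_i$ dominates, the perturbation of $u_0$ is only small relative to $\overline{W}(0,\cdot)$, which is not a translate of $U_{e_i}$ there, so a per-direction stability result would not apply off the shelf. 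Moreover the ``matching across adjacent sectors'' up to an $R$-tube around the ridges is exactly where the nontrivial geometry enters, and you give no mechanism for it. Steps~2 and~3 are sound in spirit (applying Lemma~\ref{stab-lemma3} to $W$ itself and squeezing), but they cannot start without Step~1, and the iteration needed to send $\xi_\infty=K\delta_0/\omega\to 0$ presupposes that $\delta_0$ can be made arbitrarily small at $t=T_1$, which again rests on the unproven Step~1. The paper's route sidesteps the issue entirely: the comparison is closed at $t=0$ by the explicit barriers, and the large-time convergence then falls out of comparison plus the uniqueness of $W$.
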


\begin{figure}[H]
	\centering
\includegraphics[scale=0.3]{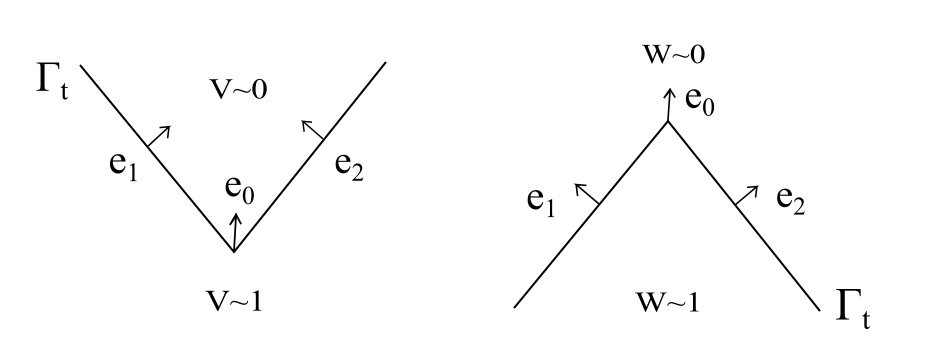}
	\caption{2-dimensional curved fronts in Theorem \ref{thm-existence} and Theorem \ref{coro}.}
\end{figure}
One can see Figure~1 for curved fronts in Theorem \ref{thm-existence} and Theorem \ref{coro} in dimension 2.
Obviously, for any fixed set of vectors $\{e_i\}_{i=1}^n$ of $\mathbb{S}^{N-1}$, the curved fronts in Theorem \ref{thm-existence} and Theorem \ref{coro} can not exist simultaneously. In the homogeneous case, the conditions of Theorem \ref{coro} can not be satisfied and hence such a kind of curved fronts can not exist. 

The rest of this paper is orgnized as follows. In Section 2, we list some preliminaries and construct sub and supersolutions by mixing moving pulsating fronts. In Section 3, we show the existence and uniqueness of the curved front in Theorem \ref{thm-existence} and Theorem \ref{coro}. Section 4 is devoted to the proof of the stability, that is, Theorem \ref{thm-stability} and Theorem \ref{coro-stability}. Lastly, in the appendix we prove  lemmas in Section~2.

\section{Preliminaries}

The main goal of this section is to construct appropriate sub and supersolutions of \eqref{1.1} by pulsating fronts. We will then need some properties of pulsating fronts, especially the differentiability with respect to the direction. We also prove a  comparison principle between sub and supersolutions in the next subsection. In Subsection~\ref{subs:surface}, we introduce a surface with asymptotic planes.  In Subsection~\ref{subs:subsup}, we construct sub and supersolutions. 

In the sequel of this paper, we will first consider the case $e_0=(0,0,\cdots,1)$ and write \eqref{1.1} in the following form for convenience
\begin{equation}\label{initial-equation}
	u_t-\nabla_{x,y}\cdot(A(x,y)\nabla_{x,y} u)=f(x,y,u),\quad (t,x,y)\in \mathbb{R}\times \mathbb{R}^{N-1}\times\mathbb{R},
\end{equation}
where $\nabla_{x,y}=\nabla_x +\partial_y$.  For general $e_0\in \mathbb{S}^{N-1}$, the proofs of Theorems \ref{thm-existence}, \ref{thm-stability} and \ref{coro} are similar as for $e_0=(0,0,\cdots,1)$ and we will discuss them in the detailed proofs. 

\subsection{Properties of pulsating fronts and a comparison principle}
Firstly, by substituting the pulsating front $U_e((x,y)\cdot e-c_et,x,y)$ into \eqref{initial-equation}, one gets that $(U_e(\xi,x,y),c_e)$ satisfies
\begin{equation}\label{property-1}
	c_e\partial_{\xi}U_e+eAe\partial_{\xi\xi}U_e+ eA\nabla_{x,y} \partial_{\xi}U_e+\nabla_{x,y} \cdot(Ae\partial_{\xi}U_e)+\nabla_{x,y} \cdot(A\nabla_{x,y} U_e)+f(x, U_e)=0,
\end{equation}
for each $e\in\mathbb{S}^{N-1}$ and $(\xi,x,y)\in\mathbb{R}\times\mathbb{T}^N$.

By Lemmas 2.4-2.5 in \cite{MR4334733}, which can be trivially extended to the $N$-dimensional space, we have following properties of pulsating fronts.

\begin{lemma}\label{pulsating-lemma-1}
	For any pulsating front $(U_e(\xi,x,y),c_e)$ with $c_e>0$, there exist $\mu_1>0$ and $C_1>0$ independent of $e$ such that
	\begin{equation}
		|\partial_{\xi}U_e(\xi,x,y)|,\, |\partial_{\xi\xi}U_e(\xi,x,y)|,\, |\nabla\partial_{\xi}U_e(\xi,x,y)|\leq C_1 e^{-\mu_1|\xi|},
	\end{equation}
	for $(\xi,x,y)\in\mathbb{R}\times\mathbb{T}^N$.
\end{lemma}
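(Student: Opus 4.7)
The plan is to reduce the lemma to the corresponding 2-dimensional statements in \cite{MR4334733} (Lemmas 2.4--2.5 there), and to check that their proofs extend to $N$-dimensional periodic media without loss of uniformity in $e$. The overall strategy splits into two steps: first, obtain exponential decay of the profile itself, i.e.\ of $U_e(\xi,x,y)$ as $\xi\to+\infty$ and of $1-U_e(\xi,x,y)$ as $\xi\to-\infty$, both with constants independent of $e\in\mathbb{S}^{N-1}$; and second, upgrade this $L^\infty$ decay to decay for $\partial_\xi U_e$, $\partial_{\xi\xi}U_e$ and $\nabla\partial_\xi U_e$ by parabolic regularity applied in suitable translates.

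For the first step, I would exploit the bistable structure (A3), which gives $-f_u(x,u)\ge\kappa$ on $\mathbb{R}^N\times([0,\sigma]\cup[1-\sigma,1])$. Linearising \eqref{property-1} around $u=0$ and looking for an exponential barrier of the form $\overline{u}(\xi,x,y)=C\,e^{-\mu\xi}\varphi(x,y)$ with $\varphi$ a smooth positive $L$-periodic function, the coefficients yield a periodic principal-eigenvalue problem in $(x,y)$. A decay rate $\mu=\mu_1>0$ can be chosen uniformly in $e$ provided $c_e$ is bounded above and below by positive constants independent of $e$; those uniform bounds on $c_e$ follow from the normalisation $\int_{\R^+\times\mathbb{T}^N}U_e^2=1$ together with (H1)--(H2) and standard compactness arguments for bistable pulsating fronts. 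The comparison principle applied on $\{\xi\ge\xi_0\}$ for $\xi_0$ chosen so that $U_e\le\sigma$ (a choice again uniform in $e$ by the normalisation) then gives the bound $U_e(\xi,x,y)\le C_1 e^{-\mu_1\xi}$ for $\xi\ge 0$, and a symmetric argument around $u=1$ gives $1-U_e(\xi,x,y)\le C_1 e^{\mu_1\xi}$ for $\xi\le 0$.

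For the second step, I would fix $(\xi_0,x_0,y_0)$ with $|\xi_0|$ large and consider the function $u(t,x,y)=U_e((x,y)\cdot e-c_et,x,y)$ on a parabolic cylinder of fixed size centred at a point where $(x,y)\cdot e = \xi_0$. Interior parabolic Schauder estimates applied to the linear equation \eqref{initial-equation} (with coefficients bounded uniformly in $e$ by (A1)--(A4)) give $C^{1+\alpha/2,2+\alpha}$ control of $u$ by $\|u\|_{L^\infty}$ on a slightly larger cylinder. Since the $L^\infty$ norm of $u$ on that larger cylinder is bounded by $C_1 e^{-\mu_1|\xi_0|}$ (possibly with $\mu_1$ slightly reduced) by the first step, chain rule relating derivatives of $u$ in $(t,x,y)$ to $\partial_\xi U_e$, $\partial_{\xi\xi}U_e$ and $\nabla\partial_\xi U_e$ yields the claimed exponential bounds.

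The main obstacle is the uniformity of the constants $\mu_1$ and $C_1$ with respect to $e\in\mathbb{S}^{N-1}$. The pointwise decay for a single pulsating front is classical; the non-trivial part is that the decay rate produced by the periodic principal eigenvalue problem, and the starting point $\xi_0(e)$ beyond which $U_e\le\sigma$, can be chosen independent of $e$. For this I would rely on continuity of $c_e$ and of the profile $U_e$ in $e$ (which, together with the normalisation, gives compactness of the family $\{U_e\}_{e\in\mathbb{S}^{N-1}}$), and on uniform ellipticity/periodicity hypotheses (A2)--(A4). Once uniformity in $e$ for the $L^\infty$ decay is secured, the Schauder step is automatic because its constants depend only on the uniform norms of the coefficients.
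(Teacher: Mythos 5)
The paper does not give a proof of this lemma: immediately before stating it, the text says that Lemmas~2.4--2.5 of \cite{MR4334733} ``can be trivially extended to the $N$-dimensional space,'' and Lemma~\ref{pulsating-lemma-1} is then asserted as a consequence of that citation. So there is no internal proof in the paper to compare your sketch against, only a pointer to the 2D argument.

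Your two-step plan --- exponential decay of the profile $U_e$ near $0$ and $1$ from the sign condition $-f_u\ge\kappa$ together with a comparison argument, followed by interior parabolic Schauder estimates for the entire solution $u(t,x,y)=U_e((x,y)\cdot e-c_e t,x,y)$ to pass to derivative decay, with uniformity over $e\in\mathbb{S}^{N-1}$ coming from continuity of $(U_e,c_e)$ in $e$ and compactness of the sphere --- is the expected reconstruction of the cited argument and is correct in outline. Two points should be tightened. First, $C^{1+\alpha/2,\,2+\alpha}$ Schauder control of $u$ bounds $u_t$, $\nabla u$, $\nabla^2 u$, but the quantities you need are $u_{tt}$ and $\nabla u_t$, since $\partial_\xi U_e=-u_t/c_e$, $\partial_{\xi\xi}U_e=u_{tt}/c_e^2$ and $\nabla\partial_\xi U_e=-\nabla u_t/c_e-e\,\partial_{\xi\xi}U_e$; this needs one further bootstrap, applying Schauder to $v=u_t$, which solves the linearized equation $v_t-\nabla_{x,y}\cdot(A\nabla_{x,y} v)=f_u(x,y,u)v$. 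Second, the barrier comparison must be run in the original $(t,x,y)$ variables on the moving region $\{(x,y)\cdot e-c_e t\ge\xi_0\}$, because the traveling-frame equation \eqref{property-1} is degenerate elliptic in $(\xi,x,y)$ and does not directly support the classical maximum principle; your step~1 is phrased as though the barrier argument takes place in the $\xi$-variable, so it would be worth a remark reconciling this with the parabolic framework of step~2. Finally, the uniform bounds $0<\inf_e c_e\le\sup_e c_e<+\infty$ and the uniform choice of $\xi_0$ with $\sup_{(x,y)}U_e(\xi_0,x,y)\le\sigma$ require continuity of $(U_e,c_e)$ in $e$, which in this paper's logical order comes from \cite{MR3772873} (and is recalled only inside the proof of Lemma~\ref{pulsating-lemma-3}); you should cite that continuity explicitly rather than attribute it to ``standard compactness arguments.''
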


\begin{lemma}\label{pulsating-lemma-2}
	For any $R>0$, there are $\delta\in(0,\frac{1}{2})$ and $r>0$ independent of $e$ such that
	\begin{equation*}
		\delta\leq U_e(\xi,x,y)\leq 1-\delta\text{ and }-\partial_{\xi}U_e(\xi,x,y)\geq r
	\end{equation*}
	for $-R\leq\xi\leq R$ and $(x,y)\in\mathbb{T}^N$.
\end{lemma}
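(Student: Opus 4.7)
The plan is a contradiction-plus-compactness argument. The key inputs are: (a) the uniform-in-$e$ exponential decay of $\partial_\xi U_e$, $\partial_{\xi\xi} U_e$, $\nabla\partial_\xi U_e$ from Lemma~\ref{pulsating-lemma-1}; (b) the normalization $\int_{\R^+\times\mathbb{T}^N}U_e^2(\xi,y)dyd\xi=1$, which removes the translation freedom in each profile; (c) the uniqueness of the pulsating front in every direction and the strict monotonicity $\partial_\xi U_e<0$, both recalled in the introduction; and (d) compactness of $\mathbb{S}^{N-1}$ together with interior parabolic Schauder estimates, which let us extract a smooth limit of any sequence $\{U_{e_n}\}$.

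For the pointwise bound $\delta\le U_e\le 1-\delta$, I would suppose by contradiction that there exist $e_n\in\mathbb{S}^{N-1}$, $\xi_n\in[-R,R]$ and $(x_n,y_n)\in\mathbb{T}^N$ with $U_{e_n}(\xi_n,x_n,y_n)\to 0$, the case of a limit equal to $1$ being symmetric. Passing to a subsequence, $e_n\to e_\infty$, $\xi_n\to\xi_\star\in[-R,R]$ and $(x_n,y_n)\to(x_\star,y_\star)$. The speeds $c_{e_n}$ are uniformly bounded: testing \eqref{property-1} against $\partial_\xi U_{e_n}$ and integrating over $\R\times\mathbb{T}^N$ expresses $c_{e_n}\int(\partial_\xi U_{e_n})^2$ in terms of the direction-independent constant $-\int_{\mathbb{T}^N}\!\int_0^1 f(x,u)du\,dxdy$ plus curvature/diffusion contributions, all of which are controlled by Lemma~\ref{pulsating-lemma-1}; combined with a uniform lower bound on $\int(\partial_\xi U_{e_n})^2$ coming from the preserved normalization this gives $|c_{e_n}|\le C$. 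Parabolic Schauder estimates applied to the equation satisfied by $U_{e_n}$ then produce, along a further subsequence, $C^{2}_{\mathrm{loc}}$ convergence to some $U_\infty$ solving the profile equation in direction $e_\infty$ with speed $c_\infty\ge 0$, satisfying $\partial_\xi U_\infty\le 0$ and $0\le U_\infty\le 1$. The exponential tail estimates pass to the limit, so $U_\infty(\pm\infty,\cdot)$ exist and are $L$-periodic equilibria of \eqref{initial-equation}; nondegeneracy at $0$ and $1$ in (A3), together with the normalization surviving the limit by dominated convergence (the exponential bounds furnish the majorant), forces $U_\infty(-\infty,\cdot)\equiv 1$, $U_\infty(+\infty,\cdot)\equiv 0$. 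Hence $(U_\infty,c_\infty)$ is itself a pulsating front in direction $e_\infty$; by uniqueness up to shift and the fixed normalization, $U_\infty\equiv U_{e_\infty}$. Evaluating at $(\xi_\star,x_\star,y_\star)$ yields $U_{e_\infty}(\xi_\star,x_\star,y_\star)=0$, contradicting $0<U_{e_\infty}<1$.

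The derivative bound $-\partial_\xi U_e\ge r$ on $[-R,R]\times\mathbb{T}^N$ follows by exactly the same scheme: a failure would produce, via the $C^{2}_{\mathrm{loc}}$ convergence of the previous step, a point of $\R\times\mathbb{T}^N$ at which $\partial_\xi U_{e_\infty}$ vanishes, contradicting the strict negativity recalled in the introduction. The main obstacle in this plan is the uniform-in-$e$ boundedness of the speeds $c_{e_n}$ and the identification of the limiting profile as the pulsating front in direction $e_\infty$ rather than a degenerate object (a constant equilibrium, or a front shifted to $\pm\infty$); both issues rest on Lemma~\ref{pulsating-lemma-1} together with the normalization being preserved in the limit, so no ingredient beyond the preceding lemma is essentially new.
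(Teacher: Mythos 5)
The paper does not prove this lemma here: it is stated as following from Lemmas~2.4--2.5 of \cite{MR4334733}, so there is no in-text proof to compare against. Your compactness-by-contradiction argument is the natural reconstruction and is correct in outline, but two of your supporting claims are attributed to the wrong sources. First, the uniform lower bound on $\int(\partial_\xi U_{e_n})^2$ does not come from the normalization $\int_{\R^+\times\mathbb{T}^N}U_e^2=1$; it comes from the universal identity $\int_\R(-\partial_\xi U_e(\xi,x,y))\,d\xi=1$ (since $U_e(-\infty,\cdot)=1$, $U_e(+\infty,\cdot)=0$) together with the uniform exponential majorant of Lemma~\ref{pulsating-lemma-1}, via Cauchy--Schwarz on a large fixed $\xi$-window. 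Indeed, integrating \eqref{property-1} against $\partial_\xi U_e$ over $\R\times\mathbb{T}^N$ and using symmetry of $A$ and the vanishing of $\nabla_{x,y}U_e$ at $\xi=\pm\infty$, the ``curvature/diffusion contributions'' you allude to actually cancel exactly, giving the clean identity $c_e\int(\partial_\xi U_e)^2=\int_{\mathbb{T}^N\times[0,1]}f>0$; with the upper and lower bounds on $\int(\partial_\xi U_e)^2$ this yields $0<c_*\le c_e\le c^*$, which also justifies your later use of the uniqueness of moving fronts without circularity. Second, the phrase ``parabolic Schauder estimates applied to the equation satisfied by $U_{e_n}$'' is imprecise: the profile equation \eqref{property-1} is degenerate elliptic in $(\xi,x,y)$ (its principal symbol vanishes on the line $\eta=-e\zeta$), so Schauder must be applied to $u_n(t,x,y)=U_{e_n}((x,y)\cdot e_n-c_{e_n}t,x,y)$ and translated back through the change of variables $t=((x,y)\cdot e_n-\xi)/c_{e_n}$, which is uniformly nondegenerate precisely because $c_{e_n}$ is bounded away from $0$. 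Finally, your identification of $U_\infty(\pm\infty,\cdot)$ via ``nondegeneracy at $0$ and $1$'' is more easily obtained by dominated convergence applied to $\int_\R\partial_\xi U_{e_n}\,d\xi=-1$, which gives $U_\infty(-\infty,\cdot)-U_\infty(+\infty,\cdot)\equiv 1$ and hence pins both limits; the normalization is then only needed (together with uniqueness up to $\xi$-shift) to conclude $U_\infty\equiv U_{e_\infty}$. With these corrections the argument is complete.
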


Next, we show the continuous differentiability and the boundedness of derivatives of $U_e$ and $c_e$ with respect to $e$. The proof is inspired from \cite{MR3689331,MR3772873} and we postpone it in the appendix.

\begin{lemma}\label{pulsating-lemma-3}
	For any $b\in\mathbb{R}^N\setminus\{0\}$, define $U_b$ and $c_b$ as follows:
	$$U_b=U_{\frac{b}{|b|}}\text{ and }c_b=c_{\frac{b}{|b|}}.$$
	Then, $U_b$ and $c_b$ are doubly continuously Fr\'{e}chet differentiable in $\mathbb{R}^N\setminus\{0\}$. Precisely, there exist linear operators $(U_b',c_b'):\mathbb{R}^N\to L^2(\mathbb{R}\times\mathbb{T}^N)\times\mathbb{R}$ and $(U_b'',c_b''):\mathbb{R}^N\times\mathbb{R}^N\to L^2(\mathbb{R}\times\mathbb{T}^N)\times\mathbb{R}$ such that for any $\tau, \rho\in\mathbb{R}^N$,
	\begin{equation*}
		\begin{aligned}
			& (U_{b+\tau},c_{b+\tau})-(U_b,c_b)=(U_b',c_b')\cdot\tau+o(|\tau|),\\
			& (U_{b+\rho}'\cdot\tau,c_{b+\rho}'\cdot\tau)-(U_b'\cdot\tau,c_b'\cdot\tau)=(U_b''\cdot\tau,c_b''\cdot\tau)\cdot\rho+o(|\rho|),
		\end{aligned}
	\end{equation*}
	as $|\tau|, |\rho|\to 0$. In addition, for any $e\in\mathbb{S}^{N-1}$, the Fr\'{e}chet derivatives up to second order of $U_e$ and $c_e$ with respect to $e$ are all bounded in the sense that
	\begin{equation*}
		\begin{aligned}
			& ||U_e'||=\sup_{|\tau|=1}||U_e'\cdot\tau||_{L^{\infty}(\mathbb{R}\times\mathbb{T}^N)}<+\infty,\quad 
			||U_e''||=\sup_{|\tau|=1,|\rho|=1} ||(U_e''\cdot\tau)\cdot\rho||_{L^{\infty}(\mathbb{R}\times\mathbb{T}^N)}<+\infty,\\
			&||\partial_{\xi}U_e'||=\sup_{|\tau|=1}||\partial_{\xi}U_e'\cdot\tau||_{L^{\infty}(\mathbb{R}\times\mathbb{T}^N)}<+\infty,\quad 
			||\nabla U_e'||=\sup_{|\tau|=1}||\nabla U_e'\cdot\tau||_{L^{\infty}(\mathbb{R}\times\mathbb{T}^N)}<+\infty,\\
			& ||c_e'||=\sup_{|\tau|=1}|c_e'\cdot\tau|<+\infty,\quad 
			||c_e''||=\sup_{|\tau|=1,|\rho|=1}|(c_e''\cdot\tau)\cdot\rho|<+\infty.
		\end{aligned}
	\end{equation*}
	Furthermore, for any $e\in\mathbb{S}^{N-1}$, there exist $\mu_2>0$ and $C_2>0$ independent of $e$ such that
	\begin{equation}\label{pulsating-lemma-4}
		|(U_e'\cdot\tau)(\xi,x,y)|,\, |(\partial_{\xi}U_e'\cdot\tau)(\xi,x,y)|\leq C_2e^{-\mu_2|\xi|}|\tau|,
	\end{equation}
	for all $\tau\in\mathbb{R}^N$ and $(\xi,x,y)\in\mathbb{R}\times\mathbb{T}^N$.
\end{lemma}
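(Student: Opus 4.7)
The plan is to apply the Implicit Function Theorem to a pulsating-front operator, reducing the regularity question to invertibility of a linearized operator in a suitably weighted function space. Fix $e_0 \in \mathbb{S}^{N-1}$ and, for $b$ near $e_0$, let $\hat{e} = b/|b|$. Define the nonlinear operator
\begin{equation*}
\mathcal{F}(b, V, c) := c\partial_\xi V + \hat{e}A\hat{e}\,\partial_{\xi\xi}V + \hat{e}A\nabla\partial_\xi V + \nabla\cdot(A\hat{e}\,\partial_\xi V) + \nabla\cdot(A\nabla V) + f(x,V),
\end{equation*}
acting on pairs $(V,c)$ with $V$ that is $L$-periodic in $(x,y)$, converges exponentially to $1$ and $0$ as $\xi \to \mp\infty$, and is subject to the normalization $\int_{\mathbb{R}^+\times\mathbb{T}^N} V^2 = 1$. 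A zero of $\mathcal{F}$ then corresponds precisely to a pulsating front in direction $\hat{e}$ with speed $c$. Since $b \mapsto \hat{e}$ is $C^\infty$ on $\mathbb{R}^N\setminus\{0\}$ and $f \in C^2$ in $u$ by (A1), $\mathcal{F}$ is $C^2$ in $(b,V,c)$ as a map between suitable weighted H\"older spaces.

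The key step is to show that $L_{e_0} := D_{(V,c)}\mathcal{F}(e_0, U_{e_0}, c_{e_0})$ is an isomorphism. Translation invariance of \eqref{property-1} in $\xi$ gives $L_{e_0}(\partial_\xi U_{e_0}, 0) = 0$, and a sliding argument based on the bistable structure (A3) together with Lemmas \ref{pulsating-lemma-1}--\ref{pulsating-lemma-2} shows that the kernel in the unnormalized space is exactly $\mathrm{span}\{\partial_\xi U_{e_0}\}$. A Krein--Rutman/Fredholm argument in the exponentially weighted space yields a one-dimensional cokernel, spanned by a positive adjoint eigenfunction $\phi^*_{e_0}$, so solvability of $L_{e_0}(V,c) = G$ reduces to the scalar condition $\int G\,\phi^*_{e_0} = 0$; this is enforceable by tuning $c$ since $\int \partial_\xi U_{e_0}\,\phi^*_{e_0} \neq 0$. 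Replacing the kernel direction by the normalization makes $L_{e_0}$ invertible, so the IFT produces a $C^2$ map $b \mapsto (U_b, c_b)$ locally, which by uniqueness of pulsating fronts must coincide with the globally defined map.

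The uniform bounds $\|U_e'\|, \|U_e''\|, \|\partial_\xi U_e'\|, \|\nabla U_e'\|, \|c_e'\|, \|c_e''\| < +\infty$ then follow from compactness of $\mathbb{S}^{N-1}$ together with a finite subcover of the local IFT neighborhoods, on each of which the derivatives are continuous and bounded. For the exponential decay in \eqref{pulsating-lemma-4}, formally differentiating \eqref{property-1} in $\tau$ yields a linear PDE for $(U_e'\cdot\tau,\, c_e'\cdot\tau)$ whose source terms decay like $e^{-\mu_1|\xi|}$ by Lemma \ref{pulsating-lemma-1}. Near $\xi = \pm\infty$, the zeroth-order coefficient of the linearization is $-f_u(x,0)$ or $-f_u(x,1)$, bounded below by $\kappa > 0$ by (A3); constructing exponential barriers of the form $Ce^{-\mu_2|\xi|}$ and applying the maximum principle, exactly as in the proof of Lemma \ref{pulsating-lemma-1}, yields the stated decay of $U_e'\cdot\tau$ and $\partial_\xi U_e'\cdot\tau$ uniformly in $e$.

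The main obstacle will be the invertibility step in the second paragraph: identifying the exact one-dimensional kernel and cokernel of $L_{e_0}$ in the exponentially weighted setting, verifying the Fredholm property, and checking that the scalar compatibility condition can indeed always be met by adjusting the speed parameter. This is the technically delicate part where the framework of \cite{MR3689331, MR3772873} is essential, and where the bistable hypothesis (A3) together with the decay from Lemma \ref{pulsating-lemma-1} are used crucially.
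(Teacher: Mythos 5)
Your plan is the same in spirit as the paper's — both are applications of the Implicit Function Theorem to a pulsating-front operator, both use the normalization $\int_{\mathbb{R}^+\times\mathbb{T}^N}U_e^2=1$, and both must ultimately establish invertibility of a linearization — but the functional-analytic packaging is genuinely different. You propose to work directly with the raw linearized PDE operator $L_{e_0}=D_{(V,c)}\mathcal{F}(e_0,U_{e_0},c_{e_0})$ in a weighted H\"older space, identify its kernel as $\mathrm{span}\{\partial_\xi U_{e_0}\}$ by a sliding argument, establish a one-dimensional cokernel via Krein--Rutman/Fredholm theory, and then check that tuning $c$ resolves the compatibility condition. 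The paper instead follows \cite{MR3689331,MR3772873} and works in $L^2$-based spaces with an auxiliary $\beta$-shifted operator $M_{c,e}(v)=\widetilde{\nabla}_e\cdot(A\widetilde{\nabla}_e v)+c\partial_\xi v-\beta v$, which is coercive and hence unconditionally invertible; the pulsating-front equation is then re-packaged into the map $G_e(v,\vartheta,\eta)$ whose partial linearization $Q_e=\partial_{(v,\vartheta)}G_e(0,0,0)$ has the structure of identity plus a composition with $M_{c,e}^{-1}$, so its invertibility (cited from \cite[Lemma 2.11]{MR3772873}) does not require a direct Fredholm analysis of the unbounded PDE operator. Concretely, the ``main obstacle'' you flag in your final paragraph — verifying Fredholmness, exact kernel/cokernel identification, and the solvability condition in the exponentially weighted setting — is exactly the step that the $M_{c,e}$/$G_e$ reformulation is designed to circumvent. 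So your route is viable but requires you to rebuild that spectral machinery from scratch, whereas the paper's route trades this for a nonlocal fixed-point-type formulation with a softer invertibility argument. Your treatment of the uniform bounds via compactness of $\mathbb{S}^{N-1}$ and of the exponential decay~\eqref{pulsating-lemma-4} via differentiating the profile equation and running comparison barriers are consistent with what the paper does (the latter is delegated to the argument of \cite[Lemma~2.7]{MR4334733} adapted to the divergence-form operator).
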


Finally,  we prove a comparison principle for sub and supersolutions. This can be regarded as a generalization of \cite[Theorem 1.12]{MR2898886}. We will prove it in the appendix.

\begin{definition}
By a sub-invasion (resp. sup-invasion) $u$ of $0$ by $1$,	we mean that $u\in [0,1]$ is a subsolution  (resp. supsolution) of \eqref{1.1} satisfying \eqref{transition4} and
	\begin{equation*}
		\Omega_{t}^+\supset\Omega_{s}^+ \text{ for all }t\geq s,
	\end{equation*}
	and
	\begin{equation*}
		d(\Gamma_t,\Gamma_s)\to+\infty\text{ as }|t-s|\to+\infty.
	\end{equation*}
\end{definition}

\begin{lemma}\label{pulsating-lemma-5}
	Let $u(t,x)$ be a sub-invasion of $0$ by $1$ of \eqref{1.1} associated to the families $(\Omega_{t}^{\pm})_{t\in\mathbb{R}}$ and $(\Gamma_t)_{t\in\mathbb{R}}$ and $\widetilde{u}(t,x)$ be a super-invasion of $0$ by $1$ of \eqref{1.1} with sets $(\widetilde{\Omega}_{t}^{\pm})_{t\in\mathbb{R}}$ and $(\widetilde{\Gamma}_t)_{t\in\mathbb{R}}$. 
	If $\widetilde{\Omega}_{t}^{-}\subset \Omega_{t}^{-}$ for all $t\in\mathbb{R}$, then there exists a smallest $T\in\mathbb{R}$ such that
	\begin{equation*}
		\widetilde{u}(t+T,x)\geq u(t,x)\text{ for all }(t,x)\in\mathbb{R}\times\mathbb{R}^N.
	\end{equation*}
	Furthermore, there exists a sequence $\{(t_n,x_n)\}_{n\in\mathbb{N}}$ in $\mathbb{R}\times\mathbb{R}^N$ such that
	\begin{equation*}
		\{d(x_n,\Gamma_{t_n})\}_{n\in\mathbb{N}}\text{ is bounded and }\widetilde{u}(t_n+T,x_n)-u(t_n,x_n)\to 0\text{ as }n\to+\infty.
	\end{equation*}
\end{lemma}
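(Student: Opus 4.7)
The plan is to prove the lemma via a sliding argument of the type used in \cite{MR2898886} for bistable transition fronts, adapted to the spatially periodic framework. I would work with the set
\[
\mathcal{T}=\bigl\{T\in\R:\widetilde{u}(t+T,x)\ge u(t,x)\ \text{for all }(t,x)\in\R\times\R^N\bigr\},
\]
and proceed in three steps: (a) show $\mathcal{T}\ne\emptyset$; (b) show $T^{*}:=\inf\mathcal{T}$ is finite and attained; (c) produce the contact sequence at bounded distance from $\Gamma_t$.

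For step (a), I would fix $\varepsilon\in(0,\sigma/2)$ and let $M>0$ be the common constant from \eqref{transition4} applied to both $u$ and $\widetilde u$. The hypothesis $\widetilde{\Omega}_t^{-}\subset\Omega_t^{-}$ is equivalent to $\Omega_t^{+}\cup\Gamma_t\subset\widetilde{\Omega}_t^{+}\cup\widetilde{\Gamma}_t$; combined with the invasion expansion $d(\widetilde{\Gamma}_{t+T},\widetilde{\Gamma}_t)\to+\infty$ as $T\to+\infty$ and the uniformity in \eqref{transition2}, this forces, for $T_0$ large enough, every point of the $M$-neighbourhood of $\Gamma_t$ to lie in $\widetilde{\Omega}_{t+T_0}^{+}$ at distance at least $M$ from $\widetilde{\Gamma}_{t+T_0}$, uniformly in $t$. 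Hence $\widetilde{u}(t+T_0,x)\ge 1-\varepsilon$ there, while outside this neighbourhood $u(t,x)\le\varepsilon$, so $\widetilde{u}(t+T_0,\cdot)+2\varepsilon\ge u(t,\cdot)$ globally. A standard bistable supersolution argument based on $-f_u(x,u)\ge\kappa$ on $[0,\sigma]\cup[1-\sigma,1]$ then upgrades this $2\varepsilon$-gap to a pointwise inequality after an extra time shift $\tau_0>0$, establishing $\mathcal{T}\ne\emptyset$.

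For step (b) a symmetric argument applied as $T\to-\infty$ produces arbitrarily large balls of $\Omega_t^{+}$ (where $u\ge 1-\varepsilon$) that sit deep inside $\widetilde{\Omega}_{t+T}^{-}$ (where $\widetilde u\le\varepsilon$), which forbids the inequality and gives $T^{*}>-\infty$; closedness of $\mathcal T$ follows from the uniform continuity of $u$ and $\widetilde u$ supplied by standard parabolic Schauder estimates, so the infimum is attained. For step (c) I would argue by contradiction: if $\widetilde{u}(t+T^{*},x)-u(t,x)\ge\eta$ whenever $d(x,\Gamma_t)\le R$ for some $\eta,R>0$, then outside this tube both $u(t,\cdot)$ and $\widetilde{u}(t+T^{*},\cdot)$ lie in the stable strips $[0,\sigma]\cup[1-\sigma,1]$. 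I would then build a time-decaying perturbation
\[
\Phi(t,x):=\widetilde{u}(t+T^{*}-\tau,x)+\delta e^{-\beta t}
\]
for small $\tau,\delta>0$ and $\beta\in(0,\kappa)$; the estimate $-f_u\ge\kappa$ on the strips makes $\Phi$ a supersolution of \eqref{1.1} there, while the $\eta$-margin inside the tube absorbs the extra $\delta$. The parabolic comparison principle then yields $\Phi\ge u$ globally, and after sending $t\to+\infty$ and adjusting one concludes $\widetilde{u}(t+T^{*}-\tau,\cdot)\ge u(t,\cdot)$, contradicting the minimality of $T^{*}$.

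The main obstacle will be the sliding step (c): because $f$ depends on $x$ one cannot translate a planar profile, so the perturbation must be built from $\widetilde u$ itself, and the $e^{-\beta t}$ decay is driven exactly by the uniform bistable rate $\kappa$. Keeping every estimate uniform in $t$, both for the sup at infinity and for the contradiction to bite near the contact tube, is the delicate point; the $L$-periodicity of $A$ and $f$ enters only through the resulting uniform parabolic Schauder estimates, and beyond this the argument parallels Theorem~1.12 of \cite{MR2898886}.
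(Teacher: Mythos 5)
Your overall skeleton (define $\mathcal{T}$, show it is nonempty and bounded below, take $T^{*}=\inf\mathcal{T}$, then argue contact) matches the logic of the paper, and your step (b) is fine. The critical gap is in step (c) (and the same issue already lurks in the ``upgrade'' at the end of step (a)). You propose the decaying perturbation $\Phi(t,x)=\widetilde{u}(t+T^{*}-\tau,x)+\delta e^{-\beta t}$ and assert it is a supersolution ``on the strips''. For $\Phi$ to be a supersolution one needs $-f_u\ge\kappa$ along the segment from $\widetilde{u}_{T^{*}-\tau}$ to $\widetilde{u}_{T^{*}-\tau}+\delta e^{-\beta t}$, i.e.\ one needs $\widetilde{u}_{T^{*}-\tau}$ (not $u$) to lie in $[0,\sigma-\delta]\cup[1-\sigma,1]$. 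Your ``tube'' is a neighbourhood of $\Gamma_t$, the interface of $u$; the transition zone of $\widetilde{u}(\cdot+T^{*}-\tau,\cdot)$ is a neighbourhood of $\widetilde{\Gamma}_{t+T^{*}-\tau}$, and these two regions need not coincide or even overlap in a controllable way, so the $\eta$-margin does not automatically cover the set where $\Phi$ fails to be a supersolution. Worse, in that transition zone $\Phi$ is genuinely \emph{not} a supersolution: the deficit $f(x,\widetilde{u}_{T^{*}-\tau})-f(x,\widetilde{u}_{T^{*}-\tau}+\delta e^{-\beta t})$ can only be bounded by $-L\delta e^{-\beta t}$, and the usual Fife--McLeod repair is to add a compensating time shift $\omega\delta(1-e^{-\lambda t})$ inside the argument of $\widetilde{u}$. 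That repair needs a positive lower bound on $\partial_t\widetilde{u}$ in the transition region, which the paper indeed uses later (Lemma~\ref{stab-lemma3}) for the constructed fronts, but which is \emph{not available} for an arbitrary super-invasion $\widetilde{u}$ as in Lemma~\ref{pulsating-lemma-5}.

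The paper avoids this difficulty entirely: it first proves the auxiliary fact \eqref{transition5} that the interfaces of a sub-invasion cannot move infinitely fast, then partitions space-time into $\omega_M^{\pm}$, establishes the boundary gap $\widetilde{u}_s-u\ge 1-2\sigma>0$ on $\partial\omega_M^-$ for $s\ge s_0$, and closes the argument via a sliding/path-tracking contradiction: if $\varepsilon^{*}=\inf\{\varepsilon:\widetilde{u}_s\ge u-\varepsilon\ \text{in}\ \omega_M^{-}\}>0$, one follows a sequence of near-contact points backward in time along piecewise paths staying inside $\omega_M^{-}$ (resp.\ $\omega_M^{+}$), applies linear parabolic (Harnack-type) estimates on the cylinders $E_n$ (resp.\ $E_{n,i}$), and contradicts the boundary gap. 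This uses only the supersolution property of $\widetilde{u}$ itself, not any exponentially decaying perturbation, and handles the non-compactness that your comparison-principle step silently ignores. To repair your proposal you would essentially have to replace step (c) by this path-tracking argument (or, equivalently, invoke the argument of \cite[Theorem~1.12]{MR2898886} directly rather than a Fife--McLeod-type supersolution).
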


\subsection{A surface with asymptotic planes}\label{subs:surface}

Take $n$ unit vectors $\nu_i\in\mathbb{S}^{N-2}$ and angles $\theta_i\in (0,\pi/2]$ ($i=1,\cdots,n$) such that $(\nu_i,\theta_i)\neq (\nu_j,\theta_j)$ for any $i\neq j$. Let $e_i=(\nu_i\cos\theta_i,\sin\theta_i)$ for $i=1,\cdots,n$. Notice that $e_i\cdot e_0>0$ for all $i$. Recall from Section~1 that $Q_i$ is the hyperplane determined by $e_i$, that is,
$$Q_i=\{(x,y)\in\mathbb{R}^N; x\cdot\nu_i\cos\theta_i+y\sin\theta_i=0\},$$
and $\mathcal{Q}$ is the unbounded polytope enclosed by $Q_1,Q_2,\cdots,Q_n$.
Moreover, $\partial\mathcal{Q}$,  $\widetilde{Q}_i$, $R_{ij}$ and $\mathcal{R}$ are the boundary, facets, ridges and the set of all ridges of $\mathcal{Q}$ respectively.  We define $\widehat{\mathcal{R}}$ as the projection of $\mathcal{R}$ on the $x$-plane, that is,
$$\widehat{\mathcal{R}}:=\{x\in\mathbb{R}^{N-1};\text{ there exists $y$ such that }(x,y)\in\mathcal{R}\}.$$
Similarly, let $\widehat{Q}_i$ be the projection of $\widetilde{Q}_i$ on the $x$-plane.

By \cite[Section~2.1]{preprint1}, we know that there is a smooth convex surface $y=\varphi(x)$ in $\mathcal{Q}$ such that the surface is approaching the facets $\widetilde{Q}_i$ exponentially as the surface diverging away from the ridges. The surface $y=\varphi(x)$ is determined by the equation
\begin{equation}\label{surface}
\sum_{i=1}^{n}e^{-q_i(x,y)}=1,
\end{equation}
where $q_i(x,y):=x\cdot\nu_i\cos\theta_i+y\sin\theta_i$. Let $\hat{q}_i(x)=q_i(x,\varphi(x))$ for every $i$ and let
$$h(x)=\sum_{j,k\in\{1,\cdots,n\},j\neq k}e^{-(\hat{q}_j(x)+\hat{q}_k(x))}, \hbox{ for $x\in\R^{N-1}$}.$$
Then, $h(x)$ is decaying exponentially in every $\widehat{Q}_i$ as $d(x,\widehat{\mathcal{R}})\rightarrow +\infty$. Precisely, by \cite[Section~2.1]{preprint1}, $\hat{q}_i(x)\to0$ and $\hat{q}_j(x)\to+\infty$ for all $j\neq i$ and $x\in\widehat{Q}_i$ as ${\rm dist}(x,\widehat{\mathcal{R}})\to+\infty$. Then, the surface $y=\varphi(x)$ has the following properties.

\begin{lemma}[\cite{preprint1}]\label{surface-lemma}
There exists $C_3>0$ such that for every $i\in\{1,\cdots,n\}$,
\begin{equation}\label{esti-surface-2}
|\nabla\varphi(x)+\nu_i \cot\theta_i|\leq C_3 h(x),\quad \hbox{ for $x\in\widehat{Q}_i$}.
\end{equation}
 and
 \begin{equation}\label{esti-surface-3}
 |\nabla^2\varphi(x)|,\, |\nabla^3\varphi(x)|\leq C_3 h(x),\quad \hbox{ for $x\in\mathbb{R}^{N-1}$}.
 \end{equation}
for $x\in\mathbb{R}^{N-1}$.
\end{lemma}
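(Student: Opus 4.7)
The plan is to get everything by implicit differentiation of the defining relation $F(x,\varphi(x))=0$, where $F(x,y)=\sum_{i=1}^n e^{-q_i(x,y)}-1$, and then to exploit two structural facts about the sums $\sum e^{-\hat q_j}$: that they sum to $1$ (so the denominator is bounded below) and that ``diagonal'' terms cancel exactly, leaving only cross terms which are directly controlled by $h(x)$.

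\textbf{Setup and a preliminary bound on $\hat q_i$.} Since $F_y=-\sum_j\sin\theta_j\,e^{-\hat q_j}=:-A(x)<0$ and $\sum_j e^{-\hat q_j}=1$ forces some $e^{-\hat q_j}\ge 1/n$, one has $A(x)\ge(\min_j\sin\theta_j)/n>0$ uniformly on $\mathbb{R}^{N-1}$. The implicit function theorem yields $\varphi\in C^\infty$ and
\[
\partial_{x_k}\varphi(x)=-\frac{B_k(x)}{A(x)},\qquad B_k(x):=\sum_{j=1}^n \nu_{j,k}\cos\theta_j\,e^{-\hat q_j(x)} .
\]
I also need that $\hat q_i$ is bounded on $\widehat Q_i$, not just small in the interior. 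For $x\in\widehat Q_i$ write $\varphi(x)=y_{\rm poly}(x)+\eta$ with $\eta>0$, where $y_{\rm poly}$ is the facet height; then $\hat q_i=\eta\sin\theta_i$ and, using $q_j(x,y_{\rm poly})\ge0$ for $j\ne i$, the defining equation $\sum_j e^{-\hat q_j}=1$ implies $\sum_j e^{-\eta\sin\theta_j}\ge1$. Since $\eta\mapsto\sum_j e^{-\eta\sin\theta_j}$ is strictly decreasing from $n$ to $0$, this forces $\eta\le\eta^\ast$, i.e.\ $\hat q_i\le\eta^\ast\sin\theta_i$, so $e^{-\hat q_i}\ge c_0>0$ uniformly on $\widehat Q_i$.

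\textbf{The first-order estimate \eqref{esti-surface-2}.} A direct computation on $\widehat Q_i$ gives
\[
\partial_{x_k}\varphi+\nu_{i,k}\cot\theta_i=\frac{\sum_{j\ne i}\bigl(\nu_{i,k}\cos\theta_i\sin\theta_j-\nu_{j,k}\cos\theta_j\sin\theta_i\bigr)\,e^{-\hat q_j}}{\sin\theta_i\,A(x)},
\]
the $j=i$ term cancelling identically. The numerator is bounded by $C\sum_{j\ne i}e^{-\hat q_j}$, and on $\widehat Q_i$ the preliminary bound gives $e^{-\hat q_j}\le c_0^{-1}\,e^{-\hat q_i-\hat q_j}\le c_0^{-1}h(x)$. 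Dividing by $A\ge c_1>0$ yields \eqref{esti-surface-2}.

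\textbf{The second- and third-order estimates \eqref{esti-surface-3}.} Differentiating the formula $\partial_{x_k}\varphi=-B_k/A$ and using $\partial_{x_m}\hat q_j=\nu_{j,m}\cos\theta_j+\partial_{x_m}\varphi\cdot\sin\theta_j$ one obtains
\[
\partial_{x_m}\partial_{x_k}\varphi=\frac{A^2E_{km}-AB_mF_k-AB_kF_m+B_mB_kD}{A^3},
\]
where $D=\sum_j\sin^2\theta_j\,e^{-\hat q_j}$, $E_{km}=\sum_j\nu_{j,k}\nu_{j,m}\cos^2\theta_j\,e^{-\hat q_j}$, and $F_k=\sum_j\nu_{j,k}\sin\theta_j\cos\theta_j\,e^{-\hat q_j}$. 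Expanding the numerator as a triple sum over indices $(l,j,p)$, I will check that the coefficient of every diagonal term $l=j=p$ vanishes identically (it equals $\sin^2\theta_l\cos^2\theta_l\,\nu_{l,k}\nu_{l,m}(1-1-1+1)=0$). Every surviving term therefore has at least two distinct indices, hence contains a factor $e^{-\hat q_a-\hat q_b}$ with $a\ne b$, and since the remaining factor $e^{-\hat q_c}\le1$, the whole numerator is bounded by a constant times $h(x)$. Dividing by $A^3\ge c_1^3>0$ gives the bound on $\nabla^2\varphi$. The bound on $\nabla^3\varphi$ follows by one more differentiation: the only new derivatives are of $\hat q_j$ and of $\varphi$, and the already-controlled estimates on $\nabla\varphi$ and $\nabla^2\varphi$ together with the same combinatorial cancellation show that every term after one more differentiation either already contains such a cross factor or picks up a $\nabla^2\varphi=O(h)$ factor.

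\textbf{Main obstacle.} The only delicate point is the combinatorial cancellation in the numerator of $\partial^2\varphi$ (and, more painfully, of $\partial^3\varphi$): one must verify that all monomials $e^{-k_1\hat q_{i_1}-\cdots}$ corresponding to a single repeated index cancel, so that what survives is genuinely of the form $e^{-\hat q_a-\hat q_b}\cdot(\text{bounded})$ with $a\ne b$. Past that algebraic bookkeeping, the estimates are forced by the uniform lower bound on $A$ and by the bound on $\hat q_i$ on each $\widehat Q_i$ established at the outset.
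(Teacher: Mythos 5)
The paper does not prove this lemma itself: it is quoted from the cited preprint \cite{preprint1}, so there is no internal argument to compare your proof against. Judged on its own, your self-contained proof by implicit differentiation is essentially correct, and the structure is natural: the uniform lower bound on $A(x)=-F_y$, the uniform upper bound on $\hat q_i$ over $\widehat Q_i$ via monotonicity of $\eta\mapsto\sum_j e^{-\eta\sin\theta_j}$, and the resulting bound $e^{-\hat q_j}\le c_0^{-1}e^{-\hat q_i-\hat q_j}\le c_0^{-1}h$ for $j\ne i$ on $\widehat Q_i$ are exactly what the estimates hinge on, and the diagonal cancellation you check for $\partial^2\varphi$ is correct.

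One remark on the third-order step, where your explanation is looser than it needs to be. You invoke ``the same combinatorial cancellation'' and ``a $\nabla^2\varphi=O(h)$ factor,'' but the cleaner mechanism is this: once \eqref{esti-surface-2} is proved, one has
\[
\partial_{x_r}\hat q_i=\nu_{i,r}\cos\theta_i+\partial_{x_r}\varphi\,\sin\theta_i=\sin\theta_i\bigl(\partial_{x_r}\varphi+\nu_{i,r}\cot\theta_i\bigr)=O(h)\quad\text{on }\widehat Q_i,
\]
so that for every building block $G\in\{A,B_k,D,E_{km},F_k\}$ (each of the form $\sum_j g_j e^{-\hat q_j}$),
\[
\partial_{x_r}G=-\sum_j g_j\,\partial_{x_r}\hat q_j\,e^{-\hat q_j}=\underbrace{-g_i\,\partial_{x_r}\hat q_i\,e^{-\hat q_i}}_{O(h)}+\underbrace{\sum_{j\ne i}(\cdots)e^{-\hat q_j}}_{O(h)}=O(h)
\]
on each $\widehat Q_i$, hence on all of $\mathbb{R}^{N-1}$. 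Since $\partial^3\varphi$ is a polynomial in $A,B,D,E,F$, their first and second derivatives, and $A^{-1}$, and every term contains at least one differentiated factor, one gets $|\nabla^3\varphi|\le Ch$ without any further combinatorics beyond the first-order cancellation. This avoids having to re-verify a triple-sum cancellation for the third derivative and also makes precise the vague ``$\nabla^2\varphi$ factor'' (the factor that saves you is $\partial_{x_r}\hat q_i$, a $\nabla\varphi$-quantity, not $\nabla^2\varphi$). With that small adjustment the argument is complete.
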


The surface $y=\varphi(x)$ is used to construct supersolutions. In the sequel, we will also consider $y=-\varphi(-x)$ which is in $-\mathcal{Q}$ and approaching $-\partial\mathcal{Q}$ exponentially,  to construct subsolutions. 

\subsection{Supersolutions and subsolutions}\label{subs:subsup}
Take $e_0=(0,0,\cdots,1)$ and $\{e_i\}_{i=1}^n$ such that $e_i\cdot e_0>0$ for all $i$. Let $y=\varphi(x)$ be the surface determined by $\{e_i\}_{i=1}^n$ given as in Section~\ref{subs:surface} and rescale it by the parameter $\alpha>0$, that is, $y=\varphi(\alpha x)/\alpha$. For every point $(x,y)$ on the surface $y=\varphi(\alpha x)/\alpha$, there is a  unit normal
\begin{equation}\label{e(x)}
e(x)=\left(-\frac{\nabla\varphi(\alpha x)}{\sqrt{1+|\nabla\varphi(\alpha x)|^2}},\frac{1}{\sqrt{1+|\nabla\varphi(\alpha x)|^2}}\right).
\end{equation}
Direct computation yields that for all $i,j\in\{1,\cdots,N-1\}$,
\begin{equation*}
		 \partial_{x_i} e(x)=\left(-\frac{\alpha\nabla\partial_{x_i}\varphi}{(1+|\nabla\varphi|^2)^{\frac{3}{2}}},-\frac{\alpha\nabla\varphi\nabla\partial_{x_i}\varphi}{(1+|\nabla\varphi|^2)^{\frac{3}{2}}}\right),
		 \end{equation*}
		 and
\begin{equation*}
\begin{split}
\partial_{x_ix_j} e(x)=&\Big(-\frac{\alpha^2\nabla\partial_{x_ix_j}\varphi}{(1+|\nabla\varphi|^2)^{\frac{3}{2}}}+\frac{3\alpha^2\nabla\varphi\nabla\partial_{x_i}\varphi\nabla\partial_{x_j}\varphi}{(1+|\nabla\varphi|^2)^{\frac{5}{2}}},\\
&-\frac{\alpha^2\partial_{x_j}(\nabla\varphi\nabla\partial_{x_i}\varphi)}{(1+|\nabla\varphi|^2)^{\frac{3}{2}}} +\frac{3\alpha^2|\nabla\varphi|^2\nabla\partial_{x_i}\varphi\nabla\partial_{x_j}\varphi}{(1+|\nabla\varphi|^2)^{\frac{5}{2}}}\Big),	
\end{split}
\end{equation*}
where $\varphi$ is taking value at $\alpha x$.
Then it follows from (\ref{esti-surface-3}) that there are positive constants $M_1$ and $M_2$ such that
\begin{equation}\label{esti-e(x)}
|\partial_{x_i} e(x)|\leq\alpha M_1 h(\alpha x) \text{ and }|\partial_{x_ix_j} e(x)|\leq\alpha^2 M_2 h(\alpha x),
\end{equation}
 for all $x\in\mathbb{R}^{N-1}$ and $i,j\in\{1,\cdots,N-1\}$.
For $\varepsilon>0$ and $\alpha>0$, define
\begin{equation}\label{eq:supersolution}
\overline{V}(t,x,y):=\min\{U_{e(x)}(\overline{\xi}(t,x,y),x,y)+\varepsilon h(\alpha x),1\},
\end{equation}
 and 
 \begin{equation}\label{eq:subsolution}
 \underline{W}(t,x,y):=\max\{U_{e(-x)}(\underline{\xi}(t,x,y),x,y)-\varepsilon h(-\alpha x),0\},
\end{equation}
for $(t,x,y)\in\R\times\R^N$, where
\begin{equation}\label{uxilxi}
\overline{\xi}(t,x,y)=\frac{y-\overline{c} t-\varphi(\alpha x)/\alpha}{\sqrt{1+|\nabla\varphi(\alpha x)|^2}}, \quad
\underline{\xi}(t,x,y)=\frac{y-\overline{c} t+\varphi(-\alpha x)/\alpha}{\sqrt{1+|\nabla\varphi(-\alpha x)|^2}},
\end{equation}
and $\overline{c}$ is a positive constant.
We show that $\overline{V}$ and $\underline{W}$ are a supersolution and a subsolution respectively of (\ref{initial-equation}) for small $\varepsilon$ and $\alpha$ under some conditions.

\begin{lemma}\label{supersolution}
If it holds that 
\begin{equation}\label{claim-super}
	-\overline{\xi}_t-c_{e(x)}\geq Ch(\alpha x),\quad \text{ for some }C>0,
\end{equation}
then there exist $\varepsilon_0>0$ and $\alpha(\varepsilon_0)>0$ such that for $0<\varepsilon<\varepsilon_0$ and $0<\alpha<\alpha(\varepsilon_0)$, 
 the function $\overline{V}(t,x,y)$ is a supersolution of (\ref{initial-equation}).

If it holds that 
\begin{equation}\label{claim-sub}
	\underline{\xi}_t+c_{e(-x)}\ge Ch(-\alpha x),\quad \text{ for some }C>0,
\end{equation}
then there exist $\varepsilon_0>0$ and $\alpha(\varepsilon_0)>0$ such that for $0<\varepsilon<\varepsilon_0$ and $0<\alpha<\alpha(\varepsilon_0)$,  the function $\underline{W}(t,x,y)$ is a subsolution of (\ref{initial-equation}).
\end{lemma}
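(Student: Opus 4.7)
The plan is to verify $L\overline{V}:=\overline{V}_t-\nabla_{x,y}\cdot(A\nabla_{x,y}\overline{V})-f(x,y,\overline{V})\ge 0$. Since the minimum of two supersolutions is a (weak) supersolution and $1$ is a stationary solution, it is enough to establish the inequality on the open set $\{\overline{u}<1\}$ for $\overline{u}(t,x,y):=U_{e(x)}(\overline{\xi},x,y)+\varepsilon h(\alpha x)$. The subsolution $\underline{W}$ is handled by a strictly symmetric argument (flipped surface $y=-\varphi(-\alpha x)/\alpha$, perturbation $-\varepsilon h(-\alpha x)$, and hypothesis \eqref{claim-sub} in place of \eqref{claim-super}), so I focus on $\overline{V}$.

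\emph{Step 1: expansion of $L\overline{u}$.} Apply the chain rule, noting that the $x$-dependence of $U_{e(x)}(\overline{\xi},x,y)$ enters in three distinct ways: through the Fr\'echet variable $e(x)$, through the moving coordinate $\overline{\xi}$, and through the periodic argument. From \eqref{uxilxi} a direct calculation gives $\nabla_{x,y}\overline{\xi}=e(x)+\overline{\xi}\cdot O(\alpha h(\alpha x))$ and $\nabla^{2}_{x,y}\overline{\xi}=O(\alpha h(\alpha x))$. Substituting into $L\overline{u}$ and subtracting the pulsating-front profile equation \eqref{property-1} at direction $e(x)$ to cancel the $\partial_{\xi\xi}U$, $\nabla\partial_\xi U$, $\nabla\cdot(A\nabla U)$ and $f(x,y,U_{e(x)})$ contributions, one obtains
\begin{equation*}
L\overline{u}=(\overline{\xi}_t+c_{e(x)})\partial_\xi U_{e(x)}+R(t,x,y)-\bigl[f(x,y,U_{e(x)}+\varepsilon h)-f(x,y,U_{e(x)})\bigr]-\nabla_{x,y}\cdot\bigl(A\nabla_{x,y}(\varepsilon h(\alpha x))\bigr),
\end{equation*}
where $R$ collects the terms arising from (a) the discrepancy $\nabla\overline{\xi}-e(x)$ (multiplied by $\partial_{\xi\xi}U$, whose exponential decay absorbs the stray $\overline{\xi}$ factor), (b) the curvature $\nabla^{2}\overline{\xi}$, and (c) the differentiation of $e(x)$, producing terms of the form $U_{e(x)}'\cdot\partial_{x_i}e$, $U_{e(x)}''\cdot(\partial e,\partial e)$ and $U_{e(x)}'\cdot\partial^{2}e$.

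\emph{Step 2: estimating $R$ and closing the argument.} Combining Lemma~\ref{surface-lemma}, the bounds \eqref{esti-e(x)}, Lemmas~\ref{pulsating-lemma-1} and \ref{pulsating-lemma-3} (uniform boundedness of $U_e',U_e''$ and the exponential decay \eqref{pulsating-lemma-4}), every contribution to $R$ picks up at least one factor of $\alpha$ and is dominated by $h(\alpha x)$, so $|R(t,x,y)|\le C_4\,\alpha\,h(\alpha x)$ for some $C_4$ independent of $\alpha,\varepsilon$. Since $\partial_\xi U_{e(x)}<0$, hypothesis \eqref{claim-super} gives $(\overline{\xi}_t+c_{e(x)})\partial_\xi U_{e(x)}\ge C\,h(\alpha x)\,|\partial_\xi U_{e(x)}|$, which is pointwise positive. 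Now choose $R>0$ so large (uniformly in $e\in\mathbb{S}^{N-1}$, using Lemmas~\ref{pulsating-lemma-1}--\ref{pulsating-lemma-2}) that $U_e(\xi,\cdot,\cdot)\in[0,\sigma]\cup[1-\sigma,1]$ for $|\xi|\ge R$, and split into two zones. On $\{|\overline{\xi}|\le R\}$ Lemma~\ref{pulsating-lemma-2} yields $|\partial_\xi U_{e(x)}|\ge r$, so the main term dominates $Crh(\alpha x)$ while the $f$-difference is $O(\varepsilon h(\alpha x))$ and the diffusion $\nabla\cdot(A\nabla(\varepsilon h))$ is $O(\alpha^{2}\varepsilon h(\alpha x))$; taking $\varepsilon<\varepsilon_0$ small keeps the main term in charge. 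On $\{|\overline{\xi}|>R\}$ the bistability assumption (A3) applied to $[U_{e(x)},U_{e(x)}+\varepsilon h]$ forces $-[f(x,y,U_{e(x)}+\varepsilon h)-f(x,y,U_{e(x)})]\ge \kappa\varepsilon h(\alpha x)$, a positive contribution that dominates both $R$ and the diffusion term as soon as $\alpha$ is small compared with $\kappa\varepsilon$. Fixing $\varepsilon_0$ from the middle-zone estimate and then $\alpha(\varepsilon_0)$ accordingly yields $L\overline{u}\ge 0$ in both zones.

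\emph{Main obstacle.} The delicate part is the bookkeeping in Step~1: one must identify, with correct signs, every second-order cross term coming from the triple $x$-dependence of $U_{e(x)}(\overline{\xi},x,y)$. Two points stand out. First, the leading quadratic form $(\nabla_{x,y}\overline{\xi})\,A\,(\nabla_{x,y}\overline{\xi})\,\partial_{\xi\xi}U$ must be matched exactly with $e(x)Ae(x)\partial_{\xi\xi}U$ from \eqref{property-1}; away from the surface this requires controlling $\nabla\overline{\xi}-e(x)$ together with the exponential decay of $\partial_{\xi\xi}U$ to absorb the linear-in-$\overline{\xi}$ error. Second, the Fr\'echet-type terms $U_{e(x)}'\cdot\partial_{x_i}e$ and $U_{e(x)}''\cdot(\partial e,\partial e)$ have no counterpart in \eqref{property-1} and must be routed into $R$ via Lemma~\ref{pulsating-lemma-3}; the saving grace is that $|\partial e|\le \alpha M_1 h(\alpha x)$ and $|\partial^{2}e|\le\alpha^{2}M_2 h(\alpha x)$, which supply the free powers of $\alpha$ that ultimately close the smallness argument.
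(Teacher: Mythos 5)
Your proposal is correct and follows essentially the same line as the paper's proof: expand $L\overline{V}$, subtract the pulsating-front profile equation \eqref{property-1}, collect all geometric and Fr\'echet-derivative error terms into a single remainder bounded by $O(\alpha h(\alpha x))$ via Lemmas~\ref{surface-lemma}, \ref{pulsating-lemma-1}, \ref{pulsating-lemma-3} and \eqref{esti-e(x)}, then split into the near zone $|\overline{\xi}|\le R$ (where Lemma~\ref{pulsating-lemma-2} gives $|\partial_\xi U|\ge r$) and the far zone $|\overline{\xi}|>R$ (where (A3) supplies the margin $\kappa\varepsilon h$), fixing $\varepsilon_0$ first and then $\alpha(\varepsilon_0)$. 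Your grouping of the error terms into one remainder $R$ is slightly more compact than the paper's term-by-term constants $C_5,C_6,C_7$, but the mechanism and the order of quantifiers are identical.
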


\begin{proof}
By the comparison principle, we only have to deal with the scenarios $0<\overline{V}<1$ and $0<\underline{W}<1$.

Case~1: $\overline{V}$ is a supersolution. By the definition of $\overline{V}$, the boundedness of $a_{ij}(x,y)$ and its derivatives$(1\leq i,j\leq N)$, and (\ref{property-1}), one can compute that
\begin{eqnarray}\label{compute-supersolu-1}
\begin{array}{lll}
N\overline{V}&:=&\overline{V}_t-\nabla_{x,y}\cdot(A(x,y)\nabla_{x,y}\overline{V})-f(x,y,\overline{V})\\
&\geq& \partial_{\xi}U_{e(x)}(\overline{\xi}_t+c_{e(x)})
-\sum_{i,j=1}^{N-1}a_{ij}U_{e(x)}''\cdot \partial_{x_i}e(x)\cdot\partial_{x_j}e(x)\\
&&-2\sum_{i,j=1}^{N-1}a_{ij}\partial_{x_j}\overline{\xi}\partial_{\xi}U_{e(x)}'\cdot\partial_{x_i}e(x) 
-2\sum_{i,j=1}^{N-1}a_{ij}\partial_{x_j}U_{e(x)}'\cdot \partial_{x_i}e(x)\\
&&-\sum_{i,j=1}^{N-1}a_{ij}U_{e(x)}'\cdot \partial_{x_ix_j}e(x) -\sum_{i=1}^{N-1}||A||_{C^1(\mathbb{R}^N)}U_{e(x)}'\cdot\partial_{x_i}e(x)\\
&&-N^2||A||_{C^1(\mathbb{R}^N)}|\partial_{\xi}U_{e(x)}(\nabla_{x,y}\overline{\xi}-e(x))| -N^2||A||_{C^0(\mathbb{R}^N)}|\partial_{\xi}U_{e(x)}\nabla^2_{x,y}\overline{\xi}|\\
&&-\partial_{\xi\xi}U_{e(x)}(\nabla_{x,y}\overline{\xi} A\nabla_{x,y}\overline{\xi}-e(x)Ae(x))-2\nabla_{x,y}\partial_{\xi}U_{e(x)}A(\nabla_{x,y}\overline{\xi}-e(x))\\
&& -\varepsilon\nabla_{x,y}\cdot(A(\nabla_xh(\alpha x),0))+f(x,y,U_{e(x)})-f(x,y,\overline{V}),
	\end{array}
\end{eqnarray}
where $\partial_{\xi}U_{e(x)}$, $\partial_{\xi\xi}U_{e(x)}$, $\nabla_{x,y}\partial_{\xi}U_{e(x)}$, $
U_{e(x)}''\cdot \partial_{x_i}e(x)\cdot\partial_{x_j}e(x)$, $
\partial_{\xi}U_{e(x)}'\cdot\partial_{x_i}e(x)$, $
\partial_{x_j}U_{e(x)}'\cdot \partial_{x_i}e(x)$, $ U_{e(x)}'\cdot \partial_{x_ix_j}e(x)$, $U_{e(x)}'\cdot\partial_{x_i}e(x)$ are taking values at $(\overline{\xi}(t,x,y),x,y)$, $A$ is taking values at $(x,y)$ and $\overline{V}$, $\overline{\xi}$, $\overline{\xi}_t$, $\nabla_{x,y}\overline{\xi}$, $\nabla^2_{x,y}\overline{\xi}$ are taking values at $(t,x,y)$. We compute the derivatives of $\overline{\xi}(t,x,y)$ as follows: 
 \begin{align*}
&\overline{\xi}_t = \displaystyle-\frac{\overline{c}}{\sqrt{1+|\nabla\varphi|^2}},\\ \allowdisplaybreaks
&\partial_y\overline{\xi}  =\displaystyle\frac{1}{\sqrt{1+|\nabla\varphi|^2}},\\
& \nabla_x\overline{\xi} =\displaystyle-\frac{\nabla\varphi}{\sqrt{1+|\nabla\varphi|^2}}-\alpha\frac{\nabla\varphi\cdot\nabla^2\varphi}{1+|\nabla\varphi|^2}\overline{\xi},\\
& \nabla_{x,y}\overline{\xi}-e(x) =\displaystyle\left(-\alpha\frac{\nabla\varphi\cdot\nabla^2\varphi}{1+|\nabla\varphi|^2}\overline{\xi},\text{ }0\right),\\
 &\nabla_{x,y}\overline{\xi}+e(x) =\displaystyle\left(-2\frac{\nabla\varphi}{\sqrt{1+|\nabla\varphi|^2}}-\alpha\frac{\nabla\varphi\cdot\nabla^2\varphi}{1+|\nabla\varphi|^2}\overline{\xi},\text{ }\frac{2}{\sqrt{1+|\nabla\varphi|^2}}\right),\\
 & \nabla^2_{x,y}\overline{\xi} =\displaystyle-\alpha\frac{\nabla^2\varphi}{\sqrt{1+|\nabla\varphi|^2}}+2\alpha\frac{\nabla^2\varphi\cdot\nabla^2\varphi}{(1+|\nabla\varphi|^2)^{\frac{3}{2}}}+3\alpha^2\frac{(\nabla\varphi\cdot\nabla^2\varphi)^2}{(1+|\nabla\varphi|^2)^2}\overline{\xi}-\alpha^2\frac{\nabla\cdot(\nabla\varphi\cdot\nabla^2\varphi)}{1+|\nabla\varphi|^2}\overline{\xi},
 \end{align*}
where $\varphi$ is taking value at $\alpha x$ and $\overline{\xi}$ is taking value at $(t,x,y)$. By (\ref{esti-surface-3}) and the boundedness of $|\nabla\varphi(\alpha x)|$ and $||A||_{C^1(\mathbb{R}^N)}$, there is $C_4>0$ such that $|\nabla^2_{x,y}\overline{\xi}|\leq \alpha C_4(1+\alpha|\overline{\xi}|)h(\alpha x)$, $|\nabla_{x,y}\overline{\xi}-e(x)|\leq \alpha C_4|\overline{\xi}|h(\alpha x)$ and $|\nabla_{x,y}\overline{\xi}+e(x)|\leq \alpha C_4(1+|\overline{\xi}|)h(\alpha x)$ for any $\overline{\xi}\in\mathbb{R}$. We also know from Lemma \ref{pulsating-lemma-1} that $|\partial_{\xi}U_{e(x)}\overline{\xi}|$, $|\partial_{\xi\xi}U_{e(x)}\overline{\xi}|$, $|\partial_{\xi\xi}U_{e(x)}\overline{\xi}^2|$ and $|\nabla_{x,y}\partial_{\xi}U_{e(x)}\overline{\xi}|$ are bounded uniformly for $(\overline{\xi},x,y)\in\mathbb{R}\times\mathbb{T}^N$. Thus, by noticing that $\nabla_{x,y}\overline{\xi} A\nabla_{x,y}\overline{\xi}-e(x)Ae(x)=(\nabla_{x,y}\overline{\xi}-e(x))A(\nabla_{x,y}\overline{\xi}+e(x))$, there is $C_5>0$ such that
\begin{equation}\label{compute-supersolu-2}
	\begin{aligned}
& n^2||A||_{C^1(\mathbb{R}^N)}|\partial_{\xi}U_{e(x)}(\nabla_{x,y}\overline{\xi}-e(x))|+n^2||A||_{C^0(\mathbb{R}^N)}|\partial_{\xi}U_{e(x)}\nabla^2_{x,y}\overline{\xi}|\\
& +|\partial_{\xi\xi}U_{e(x)}(\nabla_{x,y}\overline{\xi} A\nabla_{x,y}\overline{\xi}-e(x)Ae(x))|+|2\nabla_{x,y}\partial_{\xi}U_{e(x)}A(\nabla_{x,y}\overline{\xi}-e(x))|\leq \alpha C_5h(\alpha x).
	\end{aligned}
\end{equation}
By Lemma \ref{pulsating-lemma-3}, one knows that $||U_{e(x)}'||,||U_{e(x)}''||,||\partial_{\xi}U_{e(x)}'||$ and $||\nabla_xU_{e(x)}'||$ are all bounded for $x\in\mathbb{R}^{N-1}$. Then by (\ref{pulsating-lemma-4}), (\ref{esti-e(x)}), there is $C_6>0$ such that
\begin{equation}\label{compute-supersolu-3}
\begin{aligned}
	&
	\left|\sum_{i,j=1}^{N-1}a_{ij}U_{e(x)}''\cdot \partial_{x_i}e(x)\cdot\partial_{x_j}e(x)\right|
	+\left|2\sum_{i,j=1}^{N-1}a_{ij}\partial_{x_j}\overline{\xi}\partial_{\xi}U_{e(x)}'\cdot\partial_{x_i}e(x) \right|
	+\left|2\sum_{i,j=1}^{N-1}a_{ij}\partial_{x_j}U_{e(x)}'\cdot \partial_{x_i}e(x)\right|\\
	&+\left||\sum_{i,j=1}^{N-1}a_{ij}U_{e(x)}'\cdot \partial_{x_ix_j}e(x)|\right|
	+\left|\sum_{i=1}^{N-1}||A||_{C^1(\mathbb{R}^N)}U_{e(x)}'\cdot\partial_{x_i}e(x)\right|\leq \alpha C_6h(\alpha x).
\end{aligned}
\end{equation}
A direct computation on $h(x)$ gives that there is $C_7>0$ such that
\begin{equation}\label{compute-supersolu-4}
	\begin{aligned}
&|\nabla_{x,y}\cdot(A(\nabla_xh(\alpha x),0))|
\leq \alpha N^2||A||_{C^1(\mathbb{R}^N)}|\nabla_xh(\alpha x)|+\alpha^2 N^2||A||_{C^0(\mathbb{R}^N)}|\nabla^2_xh(\alpha x)|\\
&=\alpha N^2||A||_{C^1(\mathbb{R}^N)}|\sum_{j,k\in\{1,\cdots,n\},j\neq k}e^{-(\hat{q}_j(\alpha x)+\hat{q}_k(\alpha x))}|\nu_i\cos\theta_i+\nu_j\cos\theta_j+\nabla\varphi(\alpha x)(\sin\theta_i+\sin\theta_j)|\\
&+\alpha^2 N^2||A||_{C^0(\mathbb{R}^N)}\sum_{j,k\in\{1,\cdots,n\},j\neq k}e^{-(\hat{q}_j(\alpha x)+\hat{q}_k(\alpha x))}|\nu_i\cos\theta_i+\nu_j\cos\theta_j+\nabla\varphi(\alpha x)(\sin\theta_i+\sin\theta_j)|^2\\
& +\alpha^2 N^2||A||_{C^0(\mathbb{R}^N)}\left|\sum_{j,k\in\{1,\cdots,n\},j\neq k}e^{-(\hat{q}_j(\alpha x)+\hat{q}_k(\alpha x))}\nabla^2\varphi(\alpha x)(\sin\theta_i+\sin\theta_j)\right|\\
 &\leq \alpha^2C_7h(\alpha x).
    \end{aligned}
\end{equation}
Noticing that $\partial_{\xi}U_{e(x)}<0$, it follows from (\ref{claim-super}), (\ref{compute-supersolu-1}), (\ref{compute-supersolu-2}), (\ref{compute-supersolu-3}) and (\ref{compute-supersolu-4}) that
\begin{equation}\label{compute-supersolu-5}
N\overline{V}\geq -\partial_{\xi}U_{e(x)}Ch(\alpha x)-\alpha(C_5+C_6)h(\alpha x)-\varepsilon\alpha^2C_7h(\alpha x)+f(x,y,U_{e(x)})-f(x,y,\overline{V}).
\end{equation}
Since $U_e(-\infty,x,y)=1$ and $U_e(+\infty,x,y)=0$ for any $e\in\mathbb{S}^{N-1}$ and $(x,y)\in\mathbb{T}^N$, there is $R>0$ such that $0<U_e(\overline{\xi},x,y)\leq\sigma/2$ and $1-\sigma/2\leq U_e(\overline{\xi},x,y)<1$ for $\overline{\xi}\geq R$ and $\overline{\xi}\leq -R$ respectively, where $\sigma$ is defined in (A3). Take $\varepsilon_0\leq\sigma/2$, one has that $\overline{V}(t,x,y)<\sigma/2+\varepsilon\leq\sigma$ for $(t,x,y)\in\mathbb{R}\times\mathbb{R}^N$ such that $\overline{\xi}(t,x,y)\geq R$, since $\varepsilon\leq\varepsilon_0$ and $h(x)<1$. By (A3), we obtain
\begin{equation}\label{compute-supersolu-6}
f(x,y,U_{e(x)})-f(x,y,\overline{V})\geq\kappa\varepsilon h(\alpha x).
\end{equation} 
Thus, one can take $\alpha(\varepsilon)>0$ sufficiently small such that 
\begin{equation}\label{compute-supersolu-7}
-\alpha(C_5+C_6)-\varepsilon\alpha^2C_7+\kappa\varepsilon\geq 0,\text{ for all } 0<\alpha\leq\alpha(\varepsilon).
\end{equation}
Then, by (\ref{compute-supersolu-5}), (\ref{compute-supersolu-6}) and $\partial_{\xi}U_{e(x)}<0$, one has that
\begin{equation*}
N\overline{V}\geq \left(-\alpha(C_5+C_6)-\varepsilon\alpha^2C_7+\kappa\varepsilon\right)h(\alpha x)\geq 0.
\end{equation*}
Similarly, one can prove that $N\overline{V}\geq 0$ for $(t,x,y)\in\mathbb{R}\times\mathbb{R}^N$ such that $\overline{\xi}(t,x,y)\leq -R$. As for $(t,x,y)\in\mathbb{R}\times\mathbb{R}^N$ such that $-R\leq\overline{\xi}(t,x,y)\leq R$, it follows from Lemma \ref{pulsating-lemma-2} that there is $r>0$ such that $-\partial_{\xi}U_{e(x)}\geq r.$ Let $M:=\max_{(x,y,u)\in\mathbb{T}^N\times[0,1]}|f_u(x,y,u)|$, which can be reached since $f_u(x,y,u)$ is of class $C^2$ in $u\in\mathbb{R}$. Then one has that
\begin{equation}\label{compute-supersolu-8}
f(x,y,U_{e(x)})-f(x,y,\overline{V})\geq-\varepsilon M h(\alpha x).
\end{equation}
Now we take $\varepsilon_0=\text{min}\{rC_8/(\kappa+M),\sigma/2\}$ and $\alpha(\varepsilon_0)$ small enough such that (\ref{compute-supersolu-7}) holds, then it follows from (\ref{compute-supersolu-5}), (\ref{compute-supersolu-7}) and (\ref{compute-supersolu-8}) that
\begin{equation*}
	\begin{split}
N\overline{V}
& \geq \left(rC-\alpha(C_5+C_6)-\varepsilon\alpha^2C_7-\varepsilon M\right)h(\alpha x)\\
& \geq \left(rC-\varepsilon(\kappa+M)\right)h(\alpha x)\geq 0.
    \end{split}
\end{equation*}
for $0<\varepsilon\leq\varepsilon_0$ and $0<\alpha\leq\alpha(\varepsilon_0)$.

As a consequence, $N\overline{V}(t,x,y)\geq 0$ for all $(t,x,y)\in\mathbb{R}\times\mathbb{R}^N$, which means that  $\overline{V}$ is a supersolution of (\ref{initial-equation}).

Case~2: $\underline{W}$ is a subsolution. We now show that $\underline{W}(t,x,y)$ is a subsolution of (\ref{initial-equation}) by the same approach as above and only adjusting some signs. By the definition of $\overline{W}$, direct computation gives
	\begin{eqnarray}\label{compute-subsolu-1}
		\begin{array}{lll}
			N\underline{W}&:=&\underline{W}_t-\nabla_{x,y}\cdot(A(x,y)\nabla_{x,y}\underline{W})-f(x,y,\underline{W})\\
			&\leq& \partial_{\xi}U_{e(-x)}(\underline{\xi}_t+c_{e(-x)})
			-\sum_{i,j=1}^{N-1}a_{ij}U_{e(-x)}''\cdot \partial_{x_i}e(-x)\cdot\partial_{x_j}e(-x)\\
			&&+2\sum_{i,j=1}^{N-1}a_{ij}\partial_{x_j}\underline{\xi}\partial_{\xi}U_{e(-x)}'\cdot\partial_{x_i}e(-x) 
			+2\sum_{i,j=1}^{N-1}a_{ij}\partial_{x_j}U_{e(-x)}'\cdot \partial_{x_i}e(-x)\\
			&&+\sum_{i,j=1}^{N-1}a_{ij}U_{e(-x)}'\cdot \partial_{x_ix_j}e(-x) +\sum_{i=1}^{N-1}||A||_{C^1(\mathbb{R}^N)}U_{e(-x)}'\cdot\partial_{x_i}e(-x)\\
			&&+N^2||A||_{C^1(\mathbb{R}^N)}|\partial_{\xi}U_{e(-x)}(\nabla_{x,y}\underline{\xi}-e(-x))| +N^2||A||_{C^0(\mathbb{R}^N)}|\partial_{\xi}U_{e(-x)}\nabla^2_{x,y}\underline{\xi}|\\
			&&-\partial_{\xi\xi}U_{e(-x)}(\nabla_{x,y}\underline{\xi} A\nabla_{x,y}\underline{\xi}-e(-x)Ae(-x))-2\nabla_{x,y}\partial_{\xi}U_{e(-x)}A(\nabla_{x,y}\underline{\xi}-e(-x))\\
			&& +\varepsilon\nabla_{x,y}\cdot(A(\nabla_xh(-\alpha x),0))+f(x,y,U_{e(-x)})-f(x,y,\underline{W}),
		\end{array}
	\end{eqnarray}
where $\partial_{\xi}U_{e(-x)}$, $\partial_{\xi\xi}U_{e(-x)}$, $\nabla_{x,y}\partial_{\xi}U_{e(-x)}$, $
U_{e(-x)}''\cdot \partial_{x_i}e(-x)\cdot\partial_{x_j}e(-x)$, $
\partial_{\xi}U_{e(-x)}'\cdot\partial_{x_i}e(-x)$, $
\partial_{x_j}U_{e(-x)}'\cdot \partial_{x_i}e(-x)$, $ U_{e(-x)}'\cdot \partial_{x_ix_j}e(-x)$, $U_{e(-x)}'\cdot\partial_{x_i}e(-x)$ are taking values at $(\underline{\xi}(t,x,y),x,y)$, $A$ is taking values at $(x,y)$ and $\underline{W}$, $\underline{\xi}$, $\underline{\xi}_t$, $\nabla_{x,y}\underline{\xi}$, $\nabla^2_{x,y}\underline{\xi}$ are taking values at $(t,x,y)$.

It is easy to see that estimates (\ref{compute-supersolu-2}), (\ref{compute-supersolu-3}) and (\ref{compute-supersolu-4}) actually hold for all $(\underline{\xi},x,y)\in\mathbb{R}\times\mathbb{T}^N$. Then it follows from $\partial_{\xi}U_{e(-x)}<0$, (\ref{claim-sub}), (\ref{compute-supersolu-2}), (\ref{compute-supersolu-3}), (\ref{compute-supersolu-4}) and (\ref{compute-subsolu-1}) that
\begin{equation*}
	N\underline{W}\leq \partial_{\xi}U_{e(-x)}Ch(-\alpha x)+\alpha(C_5+C_6)h(-\alpha x)+\varepsilon\alpha^2C_7h(-\alpha x)+f(x,y,U_{e(-x)})-f(x,y,\underline{W}).
\end{equation*}
Dividing $\mathbb{R}\times\mathbb{R}^N$ into three parts, and applying (A3) as we do for Case~1, one eventually gets $N\underline{W}(t,x,y)\leq 0$ for all $(t,x,y)\in\mathbb{R}\times\mathbb{R}^N$.

This completes the proof.
\end{proof}

\vskip 0.3cm

\section{Existence of curved fronts}
In this section, we show the existence and uniqueness of curved fronts satisfying Theorem \ref{thm-existence} and Theorem~\ref{coro}. 

Let $e_0=(0,0,\cdots,1)$ and $\{e_i\}_{i=1}^n$ satisfy (i)-(iv) of Theorem~\ref{thm-existence}. Consider \eqref{initial-equation}. For $\varepsilon>0$ and $\alpha>0$, let $\overline{V}(t,x,y)$ and $\overline{\xi}(t,x,y)$ be defined by \eqref{eq:supersolution} and \eqref{uxilxi} respectively where $y=\varphi(x)$ is the surface determined by $\{e_i\}_{i=1}^n$ given as in Section~\ref{subs:surface}, $e(x)$ is given by \eqref{e(x)} and $\overline{c}=\hat{c}$ is given by (ii) of Theorem~\ref{thm-existence}. 

\begin{lemma}\label{lemma3.1}
There exists $C>0$ such that
$$-\overline{\xi}_t-c_{e(x)}\geq Ch(\alpha x),\quad \text{ for some }C>0.$$
\end{lemma}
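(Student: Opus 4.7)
The plan is to reduce the desired inequality to a statement about the function $g$ evaluated at the moving normal $e(x)$ versus at the fixed directions $e_i$, and then to exploit conditions (ii)--(iv) via a Taylor expansion near each $e_i$ combined with a compactness argument elsewhere.

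From \eqref{e(x)} we have $e(x)\cdot e_0=1/\sqrt{1+|\nabla\varphi(\alpha x)|^2}$, and from \eqref{uxilxi} we get $-\overline{\xi}_t = \hat{c}\,(e(x)\cdot e_0)$. Since by definition $c_{e(x)}=g(e(x))\,(e(x)\cdot e_0)$, the claim rewrites as
\[
(e(x)\cdot e_0)\bigl(\hat{c}-g(e(x))\bigr)\;\ge\;C\,h(\alpha x).
\]
Because $|\nabla\varphi|$ is uniformly bounded (Lemma~\ref{surface-lemma}), $e(x)\cdot e_0$ is bounded below by a positive constant, so it suffices to show $\hat{c}-g(e(x))\ge C'h(\alpha x)$ for some $C'>0$. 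Fixing $i$ and $x\in\widehat{Q}_i$, by (ii) we have $\hat{c}=g(e_i)$, and the task becomes $g(e_i)-g(e(x))\ge C'h(\alpha x)$.

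The key computation is a Taylor expansion of $g$ around $e_i$. Set $\beta(x):=\nabla\varphi(\alpha x)+\nu_i\cot\theta_i$, so by Lemma~\ref{surface-lemma} $|\beta(x)|\le C_3 h(\alpha x)$. Using the $0$-homogeneity of $g$ to write $g(e(x)) = g\bigl((-\nabla\varphi(\alpha x),1)\bigr)$ and $g(e_i) = g\bigl((\nu_i\cot\theta_i,1)\bigr)$, and combining with $\nabla g(e_i/\sin\theta_i) = \sin\theta_i\,\nabla g(e_i)$, a first-order expansion yields
\[
g(e(x))-g(e_i) = -\sin\theta_i\,\nabla_x g(e_i)\cdot\beta(x)+O(|\beta|^2).
\]
Differentiating $\sum_k e^{-\hat{q}_k}=1$ as in Section~\ref{subs:surface} gives the exact identity
\[
\beta(x) = -\frac{1}{\sin\theta_i\,S(x)}\sum_{k\ne i}e^{-\hat{q}_k(\alpha x)}\,W_{ik},\qquad W_{ik}:=\nu_k\cos\theta_k\sin\theta_i-\nu_i\cos\theta_i\sin\theta_k,
\]
with $S(x):=\sum_k e^{-\hat{q}_k(\alpha x)}\sin\theta_k$. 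A short check, using Euler's identity $\nabla g(e_i)\cdot e_i = 0$, shows $\nabla_x g(e_i)\cdot W_{ik}=\sin\theta_i\,\nabla g(e_i)\cdot e_k$, so that
\[
g(e(x))-g(e_i) = \frac{\sin\theta_i}{S(x)}\sum_{k\ne i}e^{-\hat{q}_k(\alpha x)}\,(\nabla g(e_i)\cdot e_k)+O(h(\alpha x)^2).
\]
By hypothesis (iv), each $\nabla g(e_i)\cdot e_k<0$. On $\widehat{Q}_i$, the quantity $\sum_{k\ne i}e^{-\hat{q}_k(\alpha x)}$ is comparable to $h(\alpha x)$ (using $h(x)=1-\sum_k e^{-2\hat{q}_k(x)}$ together with $e^{-\hat{q}_i}\ge 1/n$ there), while $S$ is bounded above and below by positive constants. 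Therefore the leading term is bounded above by $-C_\ast\,h(\alpha x)$ for some $C_\ast>0$.

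The main obstacle is controlling the quadratic remainder $O(h(\alpha x)^2)$ globally: near ridges $h(\alpha x)$ need not be small, so it cannot simply be absorbed into $-C_\ast h(\alpha x)$ by the Taylor bound alone. To treat this regime I would reparameterize via $x\mapsto p(x):=(e^{-\hat{q}_k(\alpha x)})_k\in\Delta^{n-1}$, under which $e(x)$ becomes a continuous function of $p$ alone and $h(\alpha x)=1-\|p\|^2$. Convexity of the soft-max $\varphi$ implies that $e(x)$ lies in the conical hull of $\{e_1,\dots,e_n\}$, so $e(x)\in\mathcal{L}(\mathcal{Q})$; condition (iii) then gives $g(e(x))<\hat{c}$ whenever $e(x)\notin\{e_1,\dots,e_n\}$. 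On the compact set $\{p\in\Delta^{n-1}:p_i=\max_k p_k,\ 1-\|p\|^2\ge\epsilon_0\}$, the continuous function $g(e_i)-g(e(p))$ admits a positive lower bound $\delta_0$, yielding $g(e_i)-g(e(x))\ge(\delta_0/\max h)\,h(\alpha x)$ there. Combining this with the Taylor estimate valid on $\{h(\alpha x)<\epsilon_0\}$ delivers the required bound. The case of general $e_0\in\mathbb{S}^{N-1}$ follows from a rigid-motion change of variables reducing to $e_0=(0,\dots,0,1)$.
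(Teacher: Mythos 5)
Your proposal is correct and follows essentially the same overall strategy as the paper: reduce to $g(e_i)-g(e(x))\ge C' h(\alpha x)$ on $\widehat{Q}_i$, use a first-order Taylor expansion of $g$ around $e_i$ together with the Euler identity $\nabla g(e_i)\cdot e_i=0$ and hypothesis (iv) when the surface is close to a facet, and handle the remaining region near the ridges by a compactness/continuity argument exploiting (iii). The bookkeeping differs: where the paper writes $e(x)=\sum_k\tau_k(x)e_k$ and expands in the $\tau_k$'s, you expand in $\beta(x)=\nabla\varphi(\alpha x)+\nu_i\cot\theta_i$ using the exact algebraic identity expressing $\beta$ via the $W_{ik}$'s, and you verify directly $\nabla_x g(e_i)\cdot W_{ik}=\sin\theta_i\,\nabla g(e_i)\cdot e_k$; this gives the same leading-order term. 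For the near-ridge regime the paper proceeds by contradiction, splitting into $|x_n|$ bounded (ruled out via convexity of $\varphi$ and a tangent-plane argument) and $|x_n|\to\infty$ (ruled out via the asymptotics of the $\tau_k$), whereas your reparameterization onto the compact simplex $\Delta^{n-1}$ merges these two cases into a single compactness argument — a cleaner organization, though to make it fully rigorous you would still need the same convexity/tangent-plane observation to guarantee that $e(p)\ne e_j$ (for all $j$, not only $j=i$) on the closure of the relevant compact set, so that (iii) applies and the infimum $\delta_0$ is strictly positive. This last point is also glossed over in the paper's own proof, so your argument sits at the same level of detail.
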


\begin{proof}
By \eqref{surface}, we can easily get the derivatives of $\varphi(x)$. In particular,
\begin{equation*}
\nabla\varphi(x)=-\frac{\sum_{i=1}^{n}e^{-\hat{q}_i}\nu_i\cos\theta_i}{\sum_{i=1}^{n}e^{-\hat{q}_i}\sin\theta_i}.
\end{equation*}
 Then, since $e_i=(\nu_i\cos\theta_i,\sin\theta_i)$ and by \eqref{e(x)}, one can compute that
\begin{equation}\label{eq:e(x)}
e(x)=\frac{1}{\sqrt{1+|\nabla\varphi(\alpha x)|^2}}\left(\frac{\sum_{i=1}^{n}e^{-\hat{q}_i(\alpha x)}\nu_i\cos\theta_i}{\sum_{i=1}^{n}e^{-\hat{q}_i(\alpha x)}\sin\theta_i},\, 1\right)=\sum_{i=1}^n\tau_i(x)e_i,
\end{equation}
where
\begin{equation*}
\tau_i(x)=\frac{e^{-\hat{q}_i(\alpha x)}}{\sqrt{1+|\nabla\varphi(\alpha x)|^2} \sum_{j=1}^{n}e^{-\hat{q}_j(\alpha x)}\sin\theta_j}\geq 0, \text{ for every }i\in\{1,\cdots,n\}.
\end{equation*}
Write $\tau_i(x)$ by $\tau_i$ for short.
By the definition of $\mathcal{Q}$ and $\mathcal{L}(\mathcal{Q})$ in (iii) of Theorem~\ref{thm-existence}, one knows that $x\cdot e(x)=\sum_{i=1}^n\tau_i  x\cdot e_i\geq 0$ for all $x\in \mathcal{Q}$, which immediately implies $e(x)\in\mathcal{L}(\mathcal{Q})$.

Notice that the $e(x)\cdot e_0=1/\sqrt{1+|\nabla\varphi(\alpha x)|^2}$ and $g(x)=c_{\frac{x}{|x|}}/(\frac{x}{|x|}\cdot e_0)$. Thus,
\begin{equation}\label{claim-conclusion}
-\overline{\xi}_t-c_{e(x)}=\frac{\hat{c}}{\sqrt{1+|\nabla\varphi(\alpha x)|^2}}-c_{e(x)}=\frac{1}{\sqrt{1+|\nabla\varphi(\alpha x)|^2}}\Big(g(e_i)-g(e(x))\Big),
\end{equation}
for $x\in\widehat{Q}_i$. In the following proof we will focus on establishing estimates of $\tau_i$ and $|e(x)-e_i|$ in different cases.

Recall that for every $i\in\{1,\cdots,n\}$, $\hat{q}_i(x)\to0$ and $\hat{q}_j(x)\to+\infty$ for all $j\neq i$ and $x\in\widehat{Q}_i$ as ${\rm dist}(x,\widehat{\mathcal{R}})\to+\infty$. That is, $e(x)\rightarrow e_i$ for $x\in\widehat{Q}_i$ as ${\rm dist}(x,\widehat{\mathcal{R}})\to+\infty$. Then, it follows from \eqref{esti-surface-2} that there is $B>0$ such that
\begin{align*}
|\tau_i -1|=& \left|\frac{e^{-\hat{q}_i(\alpha x)}(1-\sqrt{1+|\nabla\varphi(\alpha x)|^2} \sin\theta_i)-\sqrt{1+|\nabla\varphi(\alpha x)|^2} \sum_{j\neq i}e^{-\hat{q}_j(\alpha x)}\sin\theta_j}{\sqrt{1+|\nabla\varphi(\alpha x)|^2} \sum_{j=1}^{n}e^{-\hat{q}_j(\alpha x)}\sin\theta_j} \right|\\
\le& B h(\alpha x),
\end{align*}
and there are $B_2>B_1>0$ such that
\begin{equation*}
B_1 h(\alpha x)\le \sum_{j\neq i}\tau_j\le B_2 h(\alpha x), 
\end{equation*}
for $x\in\widehat{Q}_i$ and ${\rm dist}(x,\widehat{\mathcal{R}})$ sufficiently large.
Obviously, there is $\widetilde{B}>0$ such that
$$|e(x)- e_i|\le \widetilde{B} h(\alpha x), $$ 
for $x\in\widehat{Q}_i$ and ${\rm dist}(x,\widehat{\mathcal{R}})$ sufficiently large. Using the Taylor expansion, one has
\begin{align*}
g(e_i)-g(e(x))=&-\nabla g(e_i)\cdot(e(x)-e_i)-o(|e(x)-e_i|)\\
=& -(\tau_i-1)\nabla g(e_i)\cdot e_i -\sum_{j\neq i} \tau_j \nabla g(e_i)\cdot e_j-o(h(\alpha x)),
\end{align*}
 for $x\in\widehat{Q}_i$ as ${\rm dist}(x,\widehat{\mathcal{R}})\to+\infty.$
By the definition of $g$ and (iv) of Theorem~\ref{thm-existence}, one can easily show that $\nabla g(e_i)\cdot e_i=0$ and there is $B'>0$ such that 
\begin{equation}\label{claim-case1-conclusion}
g(e_i)-g(e(x))\ge B' h(\alpha x),
\end{equation}
 for $x\in\widehat{Q}_i$ and ${\rm dist}(x,\widehat{\mathcal{R}})$ sufficiently large.

On the other hand, if $x\in\widehat{Q}_i$ and ${\rm dist}(x, \widehat{\mathcal{R}})$ is bounded, we only have to prove that there is $\delta>0$ such that $|e(x)-e_i|\geq\delta$. Because, if $|e(x)-e_i|\geq\delta$,  if follows from (iii) of Theorem~\ref{thm-existence} and the continuity of $g$ that there is $\delta_1>0$ such that
\begin{equation}\label{claim-case2-conclusion}
g(e_i)-g(e(x))\geq\delta_1h(\alpha x),\quad \text{ for }x\in\widehat{Q}_i\text{ and } {\rm dist}(x,\widehat{\mathcal{R}})<+\infty.
\end{equation}
Assume by contradiction that there is a sequence $\{x_n\}_{n\in\mathbb{N}}$ of $\mathbb{R}^{N-1}$ such that  ${\rm dist}(x_n,\widehat{\mathcal{R}})<+\infty$ and $|e(x_n)-e_i|\to 0$ as $n\to+\infty$. Two cases may occur: either
\begin{equation}\label{claim-case1}
|x_n|<+\infty\text{ for all large $n$},
\end{equation}
or
\begin{equation}\label{claim-case2}
|x_n|\to+\infty\text{ as }n\to+\infty. 
\end{equation}

If (\ref{claim-case1}) happens, there is $x_0$ such that $x_n$, up to extraction of a subsequence, converges to $x_0$. Since $e(x)$ is continuous with respect to $x$ and $e(x_n)\to e_i$ as $n\to+\infty$, one has that $e(x_0)=e_i$. The tangent plane of the surface $y=\varphi(\alpha x)/\alpha$  on  the point $(x_0,\varphi(\alpha x_0)/\alpha)$ can be written as
\begin{equation*}
T=\{(x,y)\in\mathbb{R}^N;\, x\cdot\nu_i\cos\theta_i+y\sin\theta_i-\hat{q}_i(\alpha x_0)/\alpha=0\}.
\end{equation*}
Since $(x_0,\varphi(\alpha x_0)/\alpha)$ is in $\mathcal{Q}$, there is $\delta_2>0$ such that $\hat{q}_i(\alpha x_0)\geq\delta_2$.  By Lemma \ref{surface-lemma}, one knows that the surface $y=\varphi(\alpha x)/\alpha$ is approaching the facets of $\partial \mathcal{Q}$ exponentially as $x$ diverging from $\widehat{\mathcal{R}}$. Therefore, there are points $(x_1,\varphi(\alpha x_1)/\alpha)$, $(x_2,\varphi(\alpha x_2)/\alpha)$ on the surface $y=\varphi(\alpha x)/\alpha$ such that $x_1\cdot\nu_i\cos\theta_i+\varphi(\alpha x_1)/\alpha\cdot \sin\theta_i$ is small and $x_2\cdot\nu_i\cos\theta_i+\varphi(\alpha x_2)/\alpha\cdot \sin\theta_i$ is large. It means that points $(x_1,\varphi(\alpha x_1)/\alpha)$, $(x_2,\varphi(\alpha x_2)/\alpha)$ are on the opposite sides of the tangent plane $T$, which contradicts the convexity of $\varphi(x)$.

We now deal with (\ref{claim-case2}). One can easily check that ${\rm dist}(x_n,\widehat{\mathcal{R}})<+\infty$ and $|x_n|\to+\infty$ implies that $\hat{q}_i(\alpha x_n)$, $\hat{q}_j(\alpha x_n)$ are bounded for some $j\neq i$, and $\hat{q}_k(\alpha x_n)\to 0$ for all $k\in\{1,\cdots,n\}\setminus\{i,j\}$, as $n\to+\infty$. By \eqref{eq:e(x)}, one has that $\tau_i(x_n)$, $\tau_j(x_n)$ are bounded away from $0$ for all $n$ and  $\tau_k(x_n)\rightarrow 0$ as $n\rightarrow +\infty$.  Then, it is not possible that $e(x_n)=\sum_{i=1}^n\tau_i(x_n)e_i\rightarrow e_i$.

Then, the conclusion of (\ref{claim-super}) follows from (\ref{claim-conclusion}), (\ref{claim-case1-conclusion}) and (\ref{claim-case2-conclusion}) immediately.
\end{proof}

\begin{lemma}\label{supV}
Assume that (i)-(iv) of Theorem~\ref{thm-existence} hold.
There exist $\varepsilon_0>0$ and $\alpha(\varepsilon_0)>0$ such that for $0<\varepsilon<\varepsilon_0$ and $0<\alpha<\alpha(\varepsilon_0)$, 
 the function $\overline{V}(t,x,y)$ is a supersolution of (\ref{initial-equation}) satisfying
 \begin{equation}\label{lemma-supersolu-1}
|\overline{V}(t,x,y)-\underline{V}(t,x,y)|\leq2\varepsilon, \quad\text{ uniformly as }d((t,x,y),\mathcal{R}+\hat{c}te_0)\to+\infty,
\end{equation}
and
\begin{equation}\label{lemma-supersolu-2}
\overline{V}(t,x,y)\geq\underline{V}(t,x,y),\quad \text{ for }(t,x,y)\in\mathbb{R}\times\mathbb{R}^N,
\end{equation}
where $\underline{V}(t,x,y)$ defined by \eqref{subsolution} is a subsolution of (\ref{initial-equation}).
\end{lemma}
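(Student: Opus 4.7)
The plan is to establish the three assertions of the lemma separately.

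The supersolution property of $\overline{V}$ is immediate: Lemma \ref{lemma3.1} verifies the hypothesis \eqref{claim-super} of Lemma \ref{supersolution} under conditions (i)-(iv) of Theorem \ref{thm-existence}, and the conclusion of Lemma \ref{supersolution} then gives what is needed. That $\underline{V}$ is a subsolution is the standard observation that the maximum of finitely many solutions of the semilinear equation \eqref{initial-equation} is a subsolution.

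For \eqref{lemma-supersolu-1}, I work in the translated variable $z := (x, y-\hat{c}t)$ so that $d((t,x,y), \mathcal{R}+\hat{c}te_0) = d(z, \mathcal{R})$. Since $c_{e_i} = \hat{c}\sin\theta_i$ by hypothesis (ii), the formula \eqref{subsolution} becomes $\underline{V}(t,x,y) = \max_i U_{e_i}(z\cdot e_i, x, y)$. As $d(z,\mathcal{R}) \to \infty$, either $d(x, \widehat{\mathcal{R}}) \to \infty$ or $|y-\hat{c}t| \to \infty$ with $x$ remaining close to $\widehat{\mathcal{R}}$. In the first case, $x$ sits deep in a unique projected facet $\widehat{Q}_i$: $\hat{q}_i(\alpha x)\to 0$ and $\hat{q}_j(\alpha x)\to+\infty$ for $j\neq i$, giving $h(\alpha x)\to 0$; by the analysis already carried out in Lemma \ref{lemma3.1}, $e(x)\to e_i$; and using $\nabla\varphi(\alpha x)\to -\nu_i\cot\theta_i$ (Lemma \ref{surface-lemma}) a direct computation yields $\overline{\xi}(t,x,y)\to z\cdot e_i$. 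Fr\'echet continuity of $U_e$ in $e$ (Lemma \ref{pulsating-lemma-3}) together with monotonicity in $\xi$ then implies $\overline{V},\underline{V} \to U_{e_i}(z\cdot e_i, x, y)$. In the second case, all relevant arguments of $\overline{V}$ and $\underline{V}$ diverge with the same sign, so both functions tend uniformly to $0$ or to $1$. Using $h(\alpha x)\le 1$ (a consequence of $\sum_i e^{-\hat{q}_i}=1$), which gives $\varepsilon h(\alpha x)\le \varepsilon$, one concludes $|\overline{V}-\underline{V}|\le 2\varepsilon$ for $d(z,\mathcal{R})$ large enough.

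The pointwise inequality \eqref{lemma-supersolu-2} is the main obstacle. Assuming $\overline{V}(t,x,y) = U_{e(x)}(\overline{\xi},x,y)+\varepsilon h(\alpha x)$ (the clipped case $\overline{V}=1$ is trivial), the goal reduces to showing, for each $j$,
\begin{equation*}
U_{e_j}(z\cdot e_j, x, y) - U_{e(x)}(\overline{\xi}, x, y) \le \varepsilon h(\alpha x).
\end{equation*}
I will split this difference as
\begin{equation*}
U_{e_j}(z\cdot e_j) - U_{e(x)}(\overline{\xi}) = \bigl[U_{e_j}(z\cdot e_j) - U_{e_j}(\overline{\xi})\bigr] + \bigl[U_{e_j}(\overline{\xi}) - U_{e(x)}(\overline{\xi})\bigr]
\end{equation*}
(position arguments suppressed) and estimate each bracket separately. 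The second bracket is bounded by $\|U_e'\|\, |e(x)-e_j|$ via Lemma \ref{pulsating-lemma-3}; the first bracket is bounded, via the mean value theorem, by $|\partial_\xi U_{e_j}|\cdot|z\cdot e_j - \overline{\xi}|$. The key geometric inputs are that the surface $y=\varphi(\alpha x)/\alpha$ lies inside $\mathcal{Q}$ and is convex, which together with Lemma \ref{surface-lemma} yield uniform bounds of the form $C\, h(\alpha x)$ on both $|e(x)-e_j|$ and $|z\cdot e_j - \overline{\xi}|$, with the correct sign for the absorption. The delicate point, and where I expect the main difficulty to concentrate, is uniformity near the ridges, where $e(x)$ is a genuine convex combination of several $e_i$ and no single $|e(x)-e_j|$ is small; there, however, $h(\alpha x)$ is bounded below, so the buffer $\varepsilon h(\alpha x)$ remains available and can absorb the accumulated error after a careful calibration of $\varepsilon_0$ and $\alpha(\varepsilon_0)$ (possibly more restrictive than in Lemma \ref{supersolution}) that balances these constants against the lower bound on $-\partial_\xi U_{e_j}$ supplied by Lemma \ref{pulsating-lemma-2}.
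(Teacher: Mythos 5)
Your handling of the supersolution property and of \eqref{lemma-supersolu-1} matches the paper: Lemma~\ref{lemma3.1} supplies the hypothesis \eqref{claim-super} so that Lemma~\ref{supersolution} applies, and the proximity estimate is obtained by the same case split the paper uses (either $d(x,\widehat{\mathcal R})\to\infty$ with $x$ deep in one facet, so that $e(x)\to e_i$, $\overline\xi\to z\cdot e_i$, $h(\alpha x)\to 0$; or $x$ stays near $\widehat{\mathcal R}$ while $\min_i z\cdot e_i\to\pm\infty$, so that both sides tend to $0$ or to $1$).

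For \eqref{lemma-supersolu-2} you take a genuinely different route, and there is a real gap. The paper does not attempt a direct pointwise estimate: it observes that $\overline V$ is a sup-invasion and each $u_i(t,x,y)=U_{e_i}(z\cdot e_i,x,y)$ is an invasion, applies Lemma~\ref{pulsating-lemma-5} to get $\overline V(\cdot+T_{\min},\cdot)\ge u_i$ for some $T_{\min}\ge 0$, and then shows $T_{\min}=0$ by a sliding/contradiction argument that distinguishes whether the infimum over the interface band is positive or zero. Your approach, by contrast, tries to show directly that for every $j$,
\begin{equation*}
U_{e_j}(z\cdot e_j)-U_{e(x)}(\overline\xi)\le\varepsilon h(\alpha x),
\end{equation*}
by splitting into two brackets and claiming uniform bounds $|e(x)-e_j|\le Ch(\alpha x)$ and $|z\cdot e_j-\overline\xi|\le Ch(\alpha x)$. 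These bounds are false. Take $x$ deep in $\widehat Q_i$ with $i\ne j$: then $h(\alpha x)\to 0$, but $e(x)\to e_i$ so $|e(x)-e_j|\to|e_i-e_j|>0$, and $\overline\xi\to z\cdot e_i$ while $z\cdot e_j$ can be arbitrarily far from $z\cdot e_i$. So both proposed estimates break down exactly in the regime that matters most. You flag the difficulty ``near the ridges,'' but that is in fact the benign region (there $h(\alpha x)$ stays bounded away from $0$, as you note); the problematic region is the deep facet $\widehat Q_i$ for $j\ne i$. There the inequality does hold, but for a different reason: when $\overline\xi$ is not large, $w\approx\varphi(\alpha x)/\alpha$ forces $z\cdot e_j=\hat q_j(\alpha x)/\alpha\to+\infty$, so $U_{e_j}(z\cdot e_j)\approx 0$; and when $\overline\xi$ is large, the exponential decay in \eqref{pulsating-lemma-4} and Lemma~\ref{pulsating-lemma-1} controls both brackets. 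Your proposal neither isolates these regimes nor supplies the sign or decay arguments that would replace the failed uniform bounds, so as written the chain of inequalities does not close. A corrected direct proof would have to exploit monotonicity of $U_e$ in $\xi$, the convexity relation between $\overline\xi$ and $z\cdot e_j$, and the exponential decay of $\partial_\xi U_e$ and $U_e'$ in a case-by-case manner; the paper's invasion-comparison argument sidesteps all of this.
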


\begin{proof}
By Lemmas~\ref{supersolution} and \ref{lemma3.1}, one immediately has that $\overline{V}(t,x,y)$ is a supersolution of \eqref{initial-equation} for $0<\varepsilon<\varepsilon_0$ and $0<\alpha<\alpha(\varepsilon_0)$.

\textbf{Step~1: proof of \eqref{lemma-supersolu-1}.}  For $(t,x,y)\in\mathbb{R}\times\mathbb{R}^N$ such that $d((t,x,y),\partial\mathcal{Q}+\hat{c}te_0)<+\infty$ and $d((t,x,y),\mathcal{R}+\hat{c}te_0)\geq\rho$ as $\rho\to+\infty$, there is some $i\in\{1,\cdots,n\}$ such that 
\begin{equation*}
x\cdot\nu_i\cos\theta_i+(y-\hat{c}t)\sin\theta_i\text{ is bounded and }x\cdot\nu_j\cos\theta_j+(y-\hat{c}t)\sin\theta_j\to+\infty\text{ for all }j\neq i.
\end{equation*}
Since $\hat{c}=c_{e_i}/e_i\cdot e_0=c_{e_i}/\sin\theta_i$ for all $i$, this implies that 
$$\underline{V}(t,x,y)=\max\limits_{1\leq k\leq n}\{U_{e_k}(x\cdot\nu_k\cos\theta_k+(y-\hat{c}t)\sin\theta_k,x,y)\}=U_{e_i}(x\cdot\nu_i\cos\theta_i+(y-\hat{c}t)\sin\theta_i,x,y),$$ 
since $U_e(+\infty,x,y)=0$ for any $e\in\mathbb{S}^{N-1}$ and $(x,y)\in\mathbb{T}^N$. It further means that $x\in\widehat{Q}_i$ and $d(x,\widehat{\mathcal{R}})\to+\infty$. Then by $\hat{q}_i(\alpha x)\to 0$ and (\ref{esti-surface-2}), one has that $\overline{\xi}(t,x,y)\to x\cdot\nu_i\cos\theta_i+(y-\hat{c}t)\sin\theta_i$. Thus,
\begin{equation}\label{step2-1}
|U_{e_i}(\overline{\xi}(t,x,y),x,y)-U_{e_i}(x\cdot\nu_i\cos\theta_i+(y-\hat{c}t)\sin\theta_i,x,y)|\leq\frac{\varepsilon}{2}.
\end{equation}
Again by (\ref{esti-surface-2}), one knows that $e(x)\to(\nu_i\cos\theta_i,\sin\theta_i)=e_i$. Then it follows from Lemma \ref{pulsating-lemma-3} and the boundedness of $U_{e_i}$ that,
\begin{equation}\label{step2-2}
|U_{e(x)}(\overline{\xi}(t,x,y),x,y)-U_{e_i}(\overline{\xi}(t,x,y),x,y)|\leq||U_{e_i}'||\cdot|e(x)-e_i|+o(|e(x)-e_i|)\leq\frac{\varepsilon}{2}.
\end{equation}
By the definition of $\overline{V}$, (\ref{step2-1}) and (\ref{step2-2}), one has that
\begin{equation*}
	\begin{split}
|\overline{V}(t,x,y)-\underline{V}(t,x,y)|
=&|\overline{V}(t,x,y)-U_{e_i}(x\cdot\nu_i\cos\theta_i+(y-\hat{c}t)\sin\theta_i,x,y)|\\
\leq& |U_{e(x)}(\overline{\xi}(t,x,y),x,y)-U_{e_i}(\overline{\xi}(t,x,y),x,y)|+|U_{e_i}(\overline{\xi}(t,x,y),x,y)\\
&-U_{e_i}(x\cdot\nu_i\cos\theta_i+(y-\hat{c}t)\sin\theta_i,x,y)|+\varepsilon |h(\alpha x)|\\
\leq& 2\varepsilon.
	\end{split}
\end{equation*}
For $(t,x,y)\in\mathbb{R}\times\mathbb{R}^N$ such that $d((t,x,y),\partial\mathcal{Q}+\hat{c}te_0)\geq\rho$ as $\rho\to+\infty$, one has that $$\min\limits_{1\leq i\leq n}\{x\cdot\nu_i\cos\theta_i+(y-\hat{c}t)\sin\theta_i\}\to+\infty\text{ or }-\infty.$$
We consider $\min\limits_{1\leq i\leq n}\{x\cdot\nu_i\cos\theta_i+(y-\hat{c}t)\sin\theta_i\}\to+\infty$ first. Then $0<\underline{V}(t,x,y)\leq \varepsilon/2$ since $U_e(+\infty,x,y)=0$ for any $e\in\mathbb{S}^{N-1}$ and $(x,y)\in\mathbb{T}^N$. We point out that the surface $y=\varphi(x)$ is bounded away from $\partial\mathcal{Q}$. Otherwise, there is a sequence $\{x_k\}_{k\in\mathbb{N}}$ such that 
$\min\limits_{1\leq i\leq n}\{\hat{q}_i(x_k)\}\to+\infty\text{ or }-\infty$ as $k\to+\infty$, which contradicts the function $\sum_{i=1}^ne^{-\hat{q}_i(x_k)}=1$. This property immediately implies that $\alpha(y-\hat{c}t)-\varphi(\alpha x)\to+\infty$, and then $\overline{\xi}(t,x,y)\to +\infty$. Therefore, $0<U_{e(x)}(\overline{\xi}(t,x,y),x,y)\leq\varepsilon/2$ and it follows that
\begin{equation*}
|\overline{V}(t,x,y)-\underline{V}(t,x,y)|\leq|U_{e(x)}(\overline{\xi}(t,x,y),x,y)-\underline{V}(t,x,y)|+\varepsilon|h(\alpha x)|\leq 2\varepsilon.
\end{equation*}

Similar arguments can be applied for $\min\limits_{1\leq i\leq n}\{x\cdot\nu_i\cos\theta_i+(y-\hat{c}t)\sin\theta_i\}\to-\infty$ to get the above inequality.

\textbf{Step~2: proof of (\ref{lemma-supersolu-2}). } It is sufficient to prove that for all $i\in\{1,\cdots,n\}$, the inequality 
$$\overline{V}(t,x,y)\geq U_{e_i}(x\cdot\nu_i\cos\theta_i+(y-\hat{c}t)\sin\theta_i,x,y)$$ holds for $(t,x,y)\in\mathbb{R}\times\mathbb{R}^N$. We now write $u_i(t,x,y)=U_{e_i}(x\cdot\nu_i\cos\theta_i+(y-\hat{c}t)\sin\theta_i,x,y)$ for short. Since $U_e(-\infty,x,y)=1$ and $U_e(+\infty,x,y)=0$ for any $e\in\mathbb{S}^{N-1}$ and $(x,y)\in\mathbb{T}^N$, One can check from Step~1 that $u_i(t,x,y)$ is an invasion of $0$ by $1$ with sets
\begin{equation*}
	\left\{
	\begin{aligned}
		& \Gamma_t=\{(t,x,y)\in\mathbb{R}\times\mathbb{R}^N; x\cdot\nu_i\cos\theta_i+(y-\hat{c}t)\sin\theta_i=0\},\\
		& \Omega_{t}^{\pm}=\{(t,x,y)\in\mathbb{R}\times\mathbb{R}^N; x\cdot\nu_i\cos\theta_i+(y-\hat{c}t)\sin\theta_i< 0,> 0\}.
	\end{aligned}
	\right.
\end{equation*}
and $\overline{V}(t,x,y)$ is a sup-invasion of $0$ by $1$ with sets
\begin{equation*}
	\left\{
	\begin{aligned}
		& \widetilde{\Gamma}_t=\{(t,x,y)\in\mathbb{R}\times\mathbb{R}^N; \min\limits_{1\leq i\leq n}\{x\cdot\nu_i\cos\theta_i+(y-\hat{c}t)\sin\theta_i\}=0\},\\
		& \widetilde{\Omega}_{t}^{\pm}=\{(t,x,y)\in\mathbb{R}\times\mathbb{R}^N; \min\limits_{1\leq i\leq n}\{x\cdot\nu_i\cos\theta_i+(y-\hat{c}t)\sin\theta_i\}< 0,> 0\}.
	\end{aligned}
	\right.
\end{equation*}
It is easy to see that $\widetilde{\Omega}_{t}^{-}\subset \Omega_{t}^{-}$, and then one knows from Lemma \ref{pulsating-lemma-5} that there exists $T_{min}$ defined as
\begin{equation*}
	T_{min}=\inf\{T\in\mathbb{R};\overline{V}(t+T,x,y)\geq u_i(t,x,y)\text{ for }(t,x,y)\in\mathbb{R}\times\mathbb{R}^N\}
\end{equation*}
such that $\overline{V}(t+T_{min},x,y)\geq u_i(t,x,y)$ in $\mathbb{R}\times\mathbb{R}^N$.
By (\ref{lemma-supersolu-1}), one has that $\overline{V}(t,x,y)\to U_{e_i}(x\cdot\nu_i\cos\theta_i+(y-\hat{c}t)\sin\theta_i,x,y)$ for $(t,x,y)\in\mathbb{R}\times\mathbb{R}^N$ such that $d((t,x,y),\widetilde{Q}_i+\hat{c}te_0)<+\infty$ and $d((t,x,y),\mathcal{R}+\hat{c}te_0)\to+\infty$. This means that $0\leq T_{min}<+\infty$ and we only need to prove $T_{min}=0$.

By the definition of sub-invasion, sup-invasion and (\ref{transition4}), for $0<\sigma<1/2$ given in (A3), there exist $m>0$ and $\widetilde{m}>0$ such that
\begin{equation*}
	\left\{
	\begin{aligned}
		&\forall t\in\mathbb{R}, \forall (x,y)\in\Omega_{t}^{+}, (d((x,y),\Gamma_t)\geq m)\Rightarrow(u_i(t,x,y)\geq 1-\sigma/2),\\
		&\forall t\in\mathbb{R}, \forall (x,y)\in\Omega_{t}^{-}, (d((x,y),\Gamma_t)\geq m)\Rightarrow(u_i(t,x,y)\leq \sigma),\\		
	\end{aligned}
	\right.
\end{equation*}
and
\begin{equation*}
	\left\{
	\begin{aligned}
		&\forall t\in\mathbb{R}, \forall (x,y)\in\widetilde{\Omega}_{t}^{+}, (d((x,y),\widetilde{\Gamma}_t)\geq \widetilde{m})\Rightarrow(\overline{V}(t,x,y)\geq 1-\sigma/2),\\
		&\forall t\in\mathbb{R}, \forall (x,y)\in\widetilde{\Omega}_{t}^{-}, (d((x,y),\widetilde{\Gamma}_t)\geq \widetilde{m})\Rightarrow(\overline{V}(t,x,y)\leq \sigma).\\		
	\end{aligned}
	\right.
\end{equation*}
Define $\omega_m^{\pm}$, $\Omega_{\widetilde{m}}^{\pm}$ as follows:
\begin{equation*}
	\begin{split}
		& \omega_m^{\pm}:=\{(t,x,y)\in\mathbb{R}\times\mathbb{R}^N; (x,y)\in\Omega_{t}^{\pm}, d((x,y),\Gamma_t)\geq m\},\\
		& \Omega_{\widetilde{m}}^{\pm}:=\{(t,x,y)\in\mathbb{R}\times\mathbb{R}^N; (x,y)\in\widetilde{\Omega}_{t}^{\pm}, d((x,y),\widetilde{\Gamma}_t)\geq \widetilde{m}\}.
	\end{split}
\end{equation*}

Assume now by contradiction that $T_{min}>0$, there are two cases: either
\begin{equation}\label{step3-5}
\inf\{\overline{V}(t+T_{min},x,y)-u_i(t,x,y); (t,x,y)\in\mathbb{R}\times\mathbb{R}^N\setminus(\omega_{m}^-\cup\Omega_{\widetilde{m}}^+)\}>0,
\end{equation}
or
\begin{equation}\label{step3-6}
\inf\{\overline{V}(t+T_{min},x,y)-u_i(t,x,y); (t,x,y)\in\mathbb{R}\times\mathbb{R}^N\setminus(\omega_{m}^-\cup\Omega_{\widetilde{m}}^+)\}=0.
\end{equation}
If (\ref{step3-5}) happens, there is $\eta_0>0$ such that for any $\eta\in(0,\eta_0]$, it holds that $\overline{V}(t+T_{min}-\eta,x,y)-u_i(t,x,y)\geq 0$ for all $(t,x,y)\in\mathbb{R}\times\mathbb{R}^N\setminus(\omega_{m}^-\cup\Omega_{\widetilde{m}}^+)\}$. Especially, one has that $\overline{V}(t+T_{min}-\eta,x,y)-u_i(t,x,y)\geq 0$ on $\partial\omega_{m}^-\cup\partial\Omega_{\widetilde{m}}^+$. Notice that for any $\eta\in(0,\eta_0]$, $u_i(t,x,y)\leq\sigma$ in $\omega_m^-$ and $\overline{V}(t+T_{min}-\eta,x,y)\geq 1-\sigma$ in $\Omega_{\widetilde{m}}^+$ (even if it means decreasing $\eta_0>0$). Then by the same arguments as Steps~2-3 in the proof of Lemma \ref{pulsating-lemma-5} , one can get that $\overline{V}(t+T_{min}-\eta,x,y)\geq u_i(t,x,y)$ for $(t,x,y)\in\mathbb{R}\times\mathbb{R}^N$, which contradicts the definition of $T_{min}$. 

If (\ref{step3-6}) happens, there exists a sequence $\{(t_n,x_n,y_n)\}_{n\in\mathbb{N}}$ of $\mathbb{R}\times\mathbb{R}^N\setminus(\omega_{m}^-\cup\Omega_{\widetilde{m}}^+)$ such that
\begin{equation*}
\overline{V}(t_n+T_{min},x_n,y_n)-u_i(t_n,x_n,y_n)\to 0\text{ as }n\to+\infty.
\end{equation*}
This immediately implies that $d((t_n,x_n,y_n),\mathcal{R}+\hat{c}te_0)<+\infty$, since that for $T_{min}>0$,
\begin{equation*}
	\left\{
	\begin{aligned}
		 \overline{V}(t+T_{min},x,y)\to u_i(t+T_{min},x,y)>u_i(t,x,y)\text{ for all }(t,x,y)\in\mathbb{R}\times\mathbb{R}^N\text{ such that }\\ d((t,x,y),\widetilde{Q}_i+\hat{c}te_0)<+\infty\text{ and }d((t,x,y),\mathcal{R}+\hat{c}te_0)\to+\infty,\\
		 \overline{V}(t+T_{min},x,y)\to u_j(t+T_{min},x,y)>u_i(t,x,y)\text{ for all }(t,x,y)\in\mathbb{R}\times\mathbb{R}^N\text{ such that }\\ d((t,x,y),\widetilde{Q}_j+\hat{c}te_0)<+\infty\text{ and }d((t,x,y),\mathcal{R}+\hat{c}te_0)\to+\infty, \forall j\in\{1,\cdots,n\}\setminus\{i\}, \\	
		 \overline{V}(t+T_{min},x,y)\to \underline{V}(t+T_{min},x,y)>u_i(t,x,y)\text{ for all }(t,x,y)\in\mathbb{R}\times\mathbb{R}^N\text{ such that }\\ d((t,x,y),\partial\mathcal{Q}+\hat{c}te_0)\to+\infty.\\	
	\end{aligned}
	\right.
\end{equation*}

 Then one can take another sequence $\{(t_n-1,x_n',y_n')\}$ such that $d((t_n-1,x_n',y_n'),(t_n,x_n,y_n))$ is bounded and $d((t_n-1,x_n',y_n'),\mathcal{R}+\hat{c}te_0)$ large enough such that $\overline{V}(t_n-1+T_{min},x_n',y_n')>u_i(t_n-1,x_n',y_n')$. However, it follows from linear parabolic estimates that $\overline{V}(t_n-1+T_{min},x_n',y_n')-u_i(t_n-1,x_n',y_n')\to 0$, which is a contradiction. Therefore, $T_{min}=0$ and $\overline{V}(t,x,y)\geq U_{e_i}(x\cdot\nu_i\cos\theta_i+(y-\hat{c}t)\sin\theta_i,x,y)$ for all $(t,x,y)\in\mathbb{R}\times\mathbb{R}^N$.

By similar arguments, we can prove that 
$$\overline{V}(t,x,y)\geq U_{e_j}(x\cdot\nu_j\cos\theta_j+(y-\hat{c}t)\sin\theta_j,x,y),$$
 for any other $j\in\{1,\cdots,n\}$. In conclusion, $\overline{V}(t,x,y)\geq \underline{V}(t,x,y)$ for all $(t,x,y)\in\mathbb{R}\times\mathbb{R}^N$.
This completes the proof.
\end{proof}

\vskip 0.3cm

\begin{proof}[Proof of Theorem \ref{thm-existence}]
Once we have the supersolution $\overline{V}(t,x,y)$ at hand, we can easily prove the existence for $e_0=(0,0,\cdots,1)$. Then, for general $e_0\in\mathbb{S}^{N-1}$, we can change variables of $\overline{V}(t,x,y)$ for the supersolution.
 
{ \bf Step~1: the existence for $e_0=(0,0,\cdots,1)$.}
 Let $u_n(t,x,y)$ be the solution of (\ref{initial-equation}) for $t\geq -n$ with initial data
\begin{equation*}
	u_n(-n,x,y)=\underline{V}(-n,x,y),\quad (x,y)\in\mathbb{R}^N.
\end{equation*}
Then, it follows from Lemma \ref{supV} and the comparison principle that,
\begin{equation*}
	\underline{V}(t,x,y)\leq u_n(t,x,y)\leq\overline{V}(t,x,y),\text{ for }t\geq -n\text{  and }(x,y)\in\mathbb{R}^N.
\end{equation*}
Notice that the sequence $u_n(t,x,y)$ is increasing in $n$ since $\underline{V}(t,x,y)$ is a subsolution of \eqref{initial-equation}. Letting $n\to+\infty$ and by parabolic estimates, the sequence converges to an entire solution $V(t,x,y)$ of (\ref{initial-equation}) satisfying
\begin{equation*}
	\underline{V}(t,x,y)\leq V(t,x,y)\leq\overline{V}(t,x,y),\text{ for }(t,x,y)\in\mathbb{R}\times\mathbb{R}^N.
\end{equation*}
Then by \eqref{lemma-supersolu-1} and arbitrariness of $\varepsilon>0$, one gets that \eqref{thm-existence-1} holds. By \eqref{thm-existence-1} and the definition of $\underline{V}(t,x,y)$, one can easily know that $V(t,x,y)$ is a transition front with sets 
\begin{equation*}
	\left\{
	\begin{aligned}
		& \Gamma_t=\{(x,y)\in\mathbb{R}^N;\, \min\limits_{1\leq i\leq n}\{x\cdot\nu_i\cos\theta_i+(y-\hat{c}t)\sin\theta_i\}=0\}=\partial\mathcal{Q}+\hat{c} t e_0,\\
		& \Omega_{t}^{-}=\{(x,y)\in\mathbb{R}^N;\, \min\limits_{1\leq i\leq n}\{x\cdot\nu_i\cos\theta_i+(y-\hat{c}t)\sin\theta_i\}> 0\}=\mathcal{Q}+\hat{c} t e_0,\\
		& \Omega_{t}^{+}=\{(x,y)\in\mathbb{R}^N;\, \min\limits_{1\leq i\leq n}\{x\cdot\nu_i\cos\theta_i+(y-\hat{c}t)\sin\theta_i\}< 0\}=\R^N\setminus \overline{\mathcal{Q}}  +\hat{c} t e_0.
	\end{aligned}
	\right.
\end{equation*}
 Obviously, $V(t,x,y)$ is an invasion of $0$ by $1$.  By \cite[Theorem~1.11]{MR2898886}, one gets that $V_t(t,x,y)>0$ for all $(t,x,y)\in\mathbb{R}\times\mathbb{R}^N$. 

{ \bf Step~2: the existence for general $e_0\in \mathbb{S}^{N-1}$.} In this case, the subsolution $\underline{V}(t,x)$ of \eqref{1.1} is still given by \eqref{subsolution}. Let $Y=(x\cdot e_0) e_0$ and $X=x-(x\cdot e_0) e_0$. The supersolution $\overline{V}(t,x)$ of \eqref{1.1} is now given by 
\begin{equation*}
	\overline{V}(t,x):=\min\{U_{e(x)}(\overline{\xi}(t,x),x)+\varepsilon h(\alpha X),1\},
\end{equation*}
where 
$$\overline{\xi}(t,x)=\frac{Y-\hat{c}t-\varphi(\alpha X)/\alpha}{\sqrt{1+|\nabla\varphi(\alpha X)|^2}}.$$
No additional difficulties arise in the checking process of the supersolution $\overline{V}(t,x)$. Then, as Step~1, we can show the existence and monotonicity of the curved front.

{ \bf Step~3: the uniqueness of $V(t,x)$.}
Suppose that $V(t,x)$ and $\widetilde{V}(t,x)$ are both curved fronts of  \eqref{1.1} satisfying (\ref{thm-existence-1}). Then, they are all invasions of $0$ by $1$ with sets \eqref{TF-sets}. It follows from (\ref{transition4}) that there is $R>0$ such that $0<V(t,x), \widetilde{V}(t,x)\leq\sigma$ for $(t,x)\in\omega^-$ and $1-\sigma\leq V(t,x), \widetilde{V}(t,x)<1$ for $(t,x)\in\omega^+$, where
\begin{equation*}
	\begin{split}
		& \omega^{\pm}:=\{(t,x)\in\mathbb{R}\times\mathbb{R}^N; \min\limits_{1\leq i\leq n}\{x\cdot e_i-c_{e_i} t\}<-R,>R\},\\
		& \omega:=\{(t,x)\in\mathbb{R}\times\mathbb{R}^N; -R\leq \min\limits_{1\leq i\leq n}\{x\cdot e_i -c_{e_i} t\}\leq R\}.
	\end{split}
\end{equation*}
Since $\widetilde{\Omega}^-_{t+t_0}\subset\Omega_t^-$ by taking some $t_0>0$, one knows from Lemma \ref{pulsating-lemma-5} that there exists $$T_{min}=\inf\{T\in\mathbb{R};\, \widetilde{V}(t+T,x)\geq V(t,x)\text{ for }(t,x)\in\mathbb{R}\times\mathbb{R}^N\}$$
such that $\widetilde{V}(t+T_{min},x)\geq V(t,x)$ in $\mathbb{R}\times\mathbb{R}^N$. Since $V$, $\widetilde{V}$ satisfy (\ref{thm-existence-1}) and $\underline{V}(t+T,x)>\underline{V}(t,x)$ for $T>0$, we get that $T_{min}$ is well-definde and $0\leq T_{min}<+\infty$. Then one can follow the same arguments as Step~2 in the proof of Lemma \ref{supV} by considering that 
$$\inf\limits_{\omega}\{\widetilde{V}(t+T_{min},x)-V(t,x)\}>0 \hbox{ or }\inf\limits_{\omega}\{\widetilde{V}(t+T_{min},x)-V(t,x)\}=0$$ 
and eventually get $T_{min}=0$, which means $\widetilde{V}(t,x)\geq V(t,x)$ for all $(t,x)\in\mathbb{R}\times\mathbb{R}^N$. One can ues same arguments and change positions of $\widetilde{V}(t,x)$ and $V(t,x)$ to get that $\widetilde{V}(t,x)\leq V(t,x)$ for all $(t,x)\in\mathbb{R}\times\mathbb{R}^N$. Thus, $\widetilde{V}(t,x)\equiv V(t,x)$.

 This completes the proof.
\end{proof}
\vskip 0.3cm

Now, let $e_0=(0,0,\cdots,1)$ and $\{e_i\}_{i=1}^n$ satisfy (i)-(iv) of Theorem~\ref{coro}. Consider \eqref{initial-equation}. For $\varepsilon>0$ and $\alpha>0$, let $\underline{W}(t,x,y)$ and $\underline{\xi}(t,x,y)$ be defined by \eqref{eq:subsolution} and \eqref{uxilxi} respectively where $y=\varphi(x)$ is the surface determined by $\{e_i\}_{i=1}^n$ given as in Section~\ref{subs:surface} and $\overline{c}=\hat{c}$ is given by (ii) of Theorem~\ref{coro}. 

\begin{lemma}\label{3.3}
There exists $C>0$ such that
$$\underline{\xi}_t+c_{e(-x)}\geq Ch(-\alpha x),\quad \text{ for some }C>0.$$
\end{lemma}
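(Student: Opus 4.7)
The plan is to mirror the proof of Lemma~\ref{lemma3.1}, replacing $x$ by $-x$ throughout and tracking how the reversed inequalities in (iii)--(iv) of Theorem~\ref{coro} flip the relevant signs. First I would compute $\underline{\xi}_t$ directly from \eqref{uxilxi} to get $\underline{\xi}_t=-\hat{c}/\sqrt{1+|\nabla\varphi(-\alpha x)|^2}$, and observe from \eqref{e(x)} that $e(-x)\cdot e_0=1/\sqrt{1+|\nabla\varphi(-\alpha x)|^2}$. Since $\hat{c}=g(e_i)$ by hypothesis (ii) and $g(e)=c_e/(e\cdot e_0)$, this yields the identity
\begin{equation*}
\underline{\xi}_t+c_{e(-x)}=\frac{1}{\sqrt{1+|\nabla\varphi(-\alpha x)|^2}}\bigl(g(e(-x))-g(e_i)\bigr),\qquad -x\in\widehat{Q}_i.
\end{equation*}
The prefactor is bounded below, so it suffices to produce a lower bound $g(e(-x))-g(e_i)\geq C'h(-\alpha x)$.

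Next, exactly as in Lemma~\ref{lemma3.1}, I would use \eqref{eq:e(x)} (with $x$ replaced by $-x$) to write $e(-x)=\sum_{i=1}^n\tau_i(-x)e_i$ with $\tau_i(-x)\geq 0$, which in particular shows $e(-x)\in\mathcal{L}(\mathcal{Q})$. The estimates $|\tau_i(-x)-1|\leq Bh(-\alpha x)$, $B_1 h(-\alpha x)\leq\sum_{j\neq i}\tau_j(-x)\leq B_2 h(-\alpha x)$ and $|e(-x)-e_i|\leq\widetilde{B}h(-\alpha x)$ go through verbatim for $-x\in\widehat{Q}_i$ with $d(-x,\widehat{\mathcal{R}})$ large. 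The Taylor expansion gives
\begin{equation*}
g(e(-x))-g(e_i)=(\tau_i(-x)-1)\nabla g(e_i)\cdot e_i+\sum_{j\neq i}\tau_j(-x)\,\nabla g(e_i)\cdot e_j+o(h(-\alpha x)).
\end{equation*}
Since $g$ is $1$-homogeneous on rays through the origin and $e_i$ lies on $\mathbb{S}^{N-1}$, one checks $\nabla g(e_i)\cdot e_i=0$ as in Lemma~\ref{lemma3.1}; under the \emph{reversed} condition (iv) of Theorem~\ref{coro} we now have $\nabla g(e_i)\cdot e_j>0$ for every $j\neq i$, so the surviving sum is bounded below by $B_1\min_{j\neq i}(\nabla g(e_i)\cdot e_j)\,h(-\alpha x)$, which dominates the $o(h)$ error. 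This produces $g(e(-x))-g(e_i)\geq C' h(-\alpha x)$ in the far regime.

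For the bounded regime $-x\in\widehat{Q}_i$ with $d(-x,\widehat{\mathcal{R}})<+\infty$, I would run the same dichotomy--and--contradiction argument from Lemma~\ref{lemma3.1}: if no uniform gap $|e(-x)-e_i|\geq\delta$ existed, either a bounded sequence $\{-x_n\}$ converges to some point where $e(-x_0)=e_i$ (contradicting the strict convexity of $\varphi$ via the tangent-plane argument), or an unbounded sequence forces two of the $\tau_j$'s to remain bounded away from zero, precluding $e(-x_n)\to e_i$. Once such a gap $\delta$ is secured, the strict inequality $\hat{c}<g(e)$ for $e\in\mathcal{L}(\mathcal{Q})\setminus\{e_i\}$ (the \emph{reversed} condition (iii)) together with continuity of $g$ and compactness yield $g(e(-x))-g(e_i)\geq\delta_1 h(-\alpha x)$. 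Combining the two regimes gives the desired $C>0$. There is no serious obstacle beyond book-keeping: the only thing one has to watch is that both reversed inequalities (iii) and (iv) push the sign in the same direction as the required $\geq Ch(-\alpha x)$, which is precisely why the subsolution construction parallels the supersolution one.
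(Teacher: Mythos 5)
Your proof is correct and mirrors the paper's, which simply records the identity $\underline{\xi}_t+c_{e(-x)}=\frac{1}{\sqrt{1+|\nabla\varphi(-\alpha x)|^2}}\bigl(g(e(-x))-g(e_i)\bigr)$ and then notes that the reversed inequalities in (iii)--(iv) of Theorem~\ref{coro} let the argument of Lemma~\ref{lemma3.1} run verbatim with $x$ replaced by $-x$. One small mislabel: $g$ is $0$-homogeneous (constant along rays), not $1$-homogeneous, which is what gives $\nabla g(e_i)\cdot e_i=0$ via Euler's relation; your conclusion there is nonetheless correct.
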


\begin{proof}
By the definition of $\underline{\xi}(t,x,y)$, we have 
$$\underline{\xi}_t+c_{e(-x)}=-\frac{\hat{c}}{\sqrt{1+|\nabla\varphi(-\alpha x)|^2}}+c_{e(-x)}=\frac{1}{\sqrt{1+|\nabla\varphi(-\alpha x)|^2}}(g(e(-x))-g(e_i)),$$
for $-x\in \widehat{Q}_i$. By noticing that (iii) and (iv) of Theorem~\ref{coro} are reversed inequalities of  (iii) and (iv) of Theorem~\ref{thm-existence}, one can follow the same proof of Lemma~\ref{lemma3.1} to show that $g(e(-x))-g(e_i)\ge Ch(-\alpha x)$ for some $C>0$.
\end{proof}

\begin{lemma}\label{3.4}
Assume that (i)-(iv) of Theorem~\ref{coro} hold.
There exist $\varepsilon_0>0$ and $\alpha(\varepsilon_0)>0$ such that for $0<\varepsilon<\varepsilon_0$ and $0<\alpha<\alpha(\varepsilon_0)$, 
 the function $\underline{W}(t,x,y)$ is a subsolution of (\ref{initial-equation}) satisfying
\begin{equation*}
	|\underline{W}(t,x,y)-\overline{W}(t,x,y)|\leq2\varepsilon, \quad\text{ uniformly as }d((t,x,y),-\mathcal{R}+\hat{c} t e_0)\to+\infty,
\end{equation*}
and
\begin{equation*}
	\underline{W}(t,x,y)\leq\overline{W}(t,x,y),\quad\text{ for }(t,x,y)\in\mathbb{R}\times\mathbb{R}^N,
\end{equation*}
where $\overline{W}(t,x,y)=\min\limits_{1\leq i\leq n}\{U_{e_i}((x,y)\cdot e_i -c_{e_i} t,x,y)\}$ is a supersolution of (\ref{initial-equation}).
\end{lemma}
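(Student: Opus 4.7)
The plan is to mirror the three-step strategy of Lemma~\ref{supV}, with the super- and subsolution roles exchanged and the appropriate signs flipped; the structural material of Section~\ref{subs:subsup} already contains the symmetric pieces.

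First, the subsolution property of $\underline{W}$ is immediate from Lemma~\ref{supersolution} (the subsolution half) once its hypothesis \eqref{claim-sub} is checked, and this is exactly what Lemma~\ref{3.3} supplies under (ii)--(iv) of Theorem~\ref{coro}, for $\varepsilon<\varepsilon_0$ and $\alpha<\alpha(\varepsilon_0)$ suitably small. No additional computation is required here beyond invoking the two lemmas.

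Next, for the uniform asymptotic $|\underline{W}-\overline{W}|\le 2\varepsilon$ as $d((t,x,y),-\mathcal{R}+\hat{c}te_0)\to+\infty$, I would split into the same two regimes as in Step~1 of Lemma~\ref{supV}. In the regime where $(t,x,y)$ stays within bounded distance of some reflected facet $-\widetilde{Q}_i+\hat{c}te_0$ but recedes from $-\mathcal{R}+\hat{c}te_0$, Lemma~\ref{surface-lemma} yields $e(-x)\to e_i$ and $\varphi(-\alpha x)/\alpha\to x\cdot\nu_i\cot\theta_i$, so $\underline{\xi}(t,x,y)\to (x,y)\cdot e_i-c_{e_i}t$; the derivative bounds of Lemma~\ref{pulsating-lemma-3} then control $|U_{e(-x)}(\underline{\xi},x,y)-u_i(t,x,y)|$, where $u_i(t,x,y):=U_{e_i}((x,y)\cdot e_i-c_{e_i}t,x,y)$, while all other $u_j\to 1$ so that $\overline{W}=\min_j u_j\to u_i$; combined with $h(-\alpha x)\to 0$ this gives the claimed bound. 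In the complementary regime the point eventually lies deep in $-\mathcal{Q}+\hat{c}te_0$ (both sides approach $1$) or deep in the exterior $\mathbb{R}^N\setminus\overline{-\mathcal{Q}}+\hat{c}te_0$ (both approach $0$), and the saturation argument is identical to that in Lemma~\ref{supV}.

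For the pointwise inequality $\underline{W}\le\overline{W}$ it suffices to show $\underline{W}\le u_i$ for every $i$. I would apply Lemma~\ref{pulsating-lemma-5} with $u=\underline{W}$ as the sub-invasion and $\widetilde{u}=u_i$ as the sup-invasion; the required containment $\{(x,y)\cdot e_i>c_{e_i}t\}\subset \{(x,y):\exists j,\ (x,y)\cdot e_j>c_{e_j}t\}$ of the zero-zones is immediate from the description of $\Omega_t^-$ in \eqref{TF-sets2}. The lemma then produces a smallest $T_{\min}$ with $u_i(t+T_{\min},x)\ge\underline{W}(t,x)$ on $\mathbb{R}\times\mathbb{R}^N$; the asymptotic from the previous step together with the strict monotonicity of $u_i$ in $t$ forces $T_{\min}\ge 0$. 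If $T_{\min}>0$, I would run the dichotomy from Step~2 of Lemma~\ref{supV}: either the infimum of $u_i(t+T_{\min},\cdot)-\underline{W}(t,\cdot)$ over the complement of the saturation sets $\omega_m^-\cup\Omega_{\widetilde{m}}^+$ is strictly positive, allowing $T_{\min}$ to be shifted downward using (A3) while the inequality is maintained on the complement and extended across the saturation sets as in Lemma~\ref{pulsating-lemma-5}; or else it vanishes along a sequence staying at bounded distance from $-\mathcal{R}+\hat{c}t_n e_0$, at which point a one-step backward shift together with standard parabolic estimates produces a contradiction with the asymptotic. I expect this sliding/contact dichotomy, now carried out around $-\mathcal{R}$ rather than $\mathcal{R}$, to be the main obstacle, though it is formally parallel to the argument in Lemma~\ref{supV}. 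Taking the minimum over $i$ finally yields $\underline{W}\le\overline{W}$ everywhere.
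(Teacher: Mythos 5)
Your proposal follows the same route as the paper: the subsolution property comes from Lemma~\ref{supersolution} combined with Lemma~\ref{3.3}, the asymptotic $|\underline{W}-\overline{W}|\le 2\varepsilon$ is obtained by repeating Step~1 of Lemma~\ref{supV} with $e(-x)\to e_i$ and $\underline{\xi}\to (x,y)\cdot e_i-c_{e_i}t$ on each reflected facet, and the pointwise bound $\underline{W}\le u_i$ for each $i$ is obtained by feeding $\underline{W}$ (sub-invasion) and $u_i$ (sup-invasion) into Lemma~\ref{pulsating-lemma-5} with the same trivial zero-zone containment, then running the sliding/contact dichotomy of Step~2 of Lemma~\ref{supV} to exclude $T_{\min}>0$. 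This is exactly the argument in the paper, which literally defers to Lemmas~\ref{supersolution}, \ref{3.3}, \ref{pulsating-lemma-5} and the two steps of Lemma~\ref{supV}.
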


\begin{proof}
By Lemma \ref{supersolution} and Lemma \ref{3.3}, one immediately knows $\underline{W}(t,x,y)$ is a subsolution of (\ref{initial-equation}). By noticing that $e(-x)\to e_i$ and $\underline{\xi}(t,x,y)\to x\cdot \nu_i \cos\theta_i+(y-\hat{c}t) \sin\theta_i$ if $-x\in\hat{Q}_i$ and $d(-x,\hat{\mathcal{R}})\to+\infty$, one can follow Step~1 of the proof of Lemma \ref{supV} to get that $	|\underline{W}(t,x,y)-\overline{W}(t,x,y)|\leq2\varepsilon$ uniformly for $d((t,x,y),-\mathcal{R}+\hat{c} t e_0)$ sufficiently large. Due to these two properties and $\hat{c}=c_{e_i}/e_i\cdot e_0=c_{e_i}/\sin\theta_i$, we observe that for each $i\in\{1,\cdots,n\}$, $u_i(t,x,y):=U_{e_i}(x\cdot\nu_i\cos\theta_i+(y-\hat{c}t)\sin\theta_i,x,y)$ is an invasion of $0$ by $1$ with sets
\begin{equation*}
	\left\{
	\begin{aligned}
		& \Gamma_t=\{(t,x,y)\in\mathbb{R}\times\mathbb{R}^N; x\cdot\nu_i\cos\theta_i+(y-\hat{c}t)\sin\theta_i=0\},\\
		& \Omega_{t}^{\pm}=\{(t,x,y)\in\mathbb{R}\times\mathbb{R}^N; x\cdot\nu_i\cos\theta_i+(y-\hat{c}t)\sin\theta_i< 0,> 0\}.
	\end{aligned}
	\right.
\end{equation*}
and $\underline{W}(t,x,y)$ is a sub-invasion of $0$ by $1$ with sets
\begin{equation*}
	\left\{
	\begin{aligned}
		& \widetilde{\Gamma}_t=\{(t,x,y)\in\mathbb{R}\times\mathbb{R}^N; \max\limits_{1\leq i\leq n}\{x\cdot\nu_i\cos\theta_i+(y-\hat{c}t)\sin\theta_i\}=0\},\\
		& \widetilde{\Omega}_{t}^{\pm}=\{(t,x,y)\in\mathbb{R}\times\mathbb{R}^N; \max\limits_{1\leq i\leq n}\{x\cdot\nu_i\cos\theta_i+(y-\hat{c}t)\sin\theta_i\}< 0,> 0\}.
	\end{aligned}
	\right.
\end{equation*}
It is easy to see that $\Omega_{t}^{-}\subset \widetilde{\Omega}_{t}^{-}$, and therefore one knows from Lemma \ref{pulsating-lemma-5} that there exists $T_{min}$ defined as
\begin{equation*}
	T_{min}=\inf\{T\in\mathbb{R}; u_i(t+T,x,y)\geq \underline{W}(t,x,y)\text{ for }(t,x,y)\in\mathbb{R}\times\mathbb{R}^N\}
\end{equation*}
such that $u_i(t+T_{min},x,y)\geq \underline{W}(t,x,y)$ in $\mathbb{R}\times\mathbb{R}^N$. Furthermore, there exists a sequence $\{(t_n,x_n,y_n)\}_{n\in\mathbb{N}}$ such that
$$\{d(x_n,\widetilde{\Gamma}_{t_n})\}\text{ is bounded and }u_i(t_n+T_{min},x_n,y_n)-\underline{W}(t_n,x_n,y_n)\to 0\text{ as }n\to+\infty.$$
Then one can follow Step~2 of the proof of Lemma \ref{supV} by using linear parabolic estimates to get a contradiction if $T_{min}>0$. Thus, $\underline{W}(t,x,y)\leq U_{e_i}(x\cdot\nu_i\cos\theta_i+(y-\hat{c}t)\sin\theta_i,x,y)$ for $(t,x,y)\in\mathbb{R}\times\mathbb{R}^N$ and for every $i\in\{1,\cdots,n\}$. This leads to the conclusion.
\end{proof}

\vskip 0.3cm

\begin{proof}[Proof of Theorem~\ref{coro}]
Let $u_n(t,x,y)$ be the solution of (\ref{initial-equation}) for $t\geq -n$ with initial data
\begin{equation*}
	u_n(-n,x,y)=\overline{W}(-n,x,y),\quad (x,y)\in\mathbb{R}^N.
\end{equation*}
Then, it follows from Lemma \ref{3.4} and the comparison principle that,
\begin{equation*}
	\underline{W}(t,x,y)\leq u_n(t,x,y)\leq\overline{W}(t,x,y),\text{ for }t\geq -n\text{  and }(x,y)\in\mathbb{R}^N.
\end{equation*}
Notice that the sequence $u_n(t,x,y)$ is decreasing in $n$ since $\overline{W}(t,x,y)$ is a supersolution of \eqref{initial-equation}. Letting $n\to+\infty$ and by parabolic estimates, the sequence converges to an entire solution $W(t,x,y)$ of (\ref{initial-equation}) satisfying
\begin{equation*}
	\underline{W}(t,x,y)\leq W(t,x,y)\leq\overline{W}(t,x,y),\text{ for }(t,x,y)\in\mathbb{R}\times\mathbb{R}^N.
\end{equation*}
Again by Lemma \ref{3.4} and arbitrariness of $\varepsilon>0$, one gets that \eqref{coro-1} holds. By \eqref{coro-1} and the definition of $\overline{W}(t,x,y)$, one can easily check that $W(t,x,y)$ is a transition front with sets 
\begin{equation*}
	\left\{
	\begin{aligned}
		& \Gamma_t=\{(x,y)\in\mathbb{R}^N;\, \max\limits_{1\leq i\leq n}\{x\cdot\nu_i\cos\theta_i+(y-\hat{c}t)\sin\theta_i\}=0\}=-\partial\mathcal{Q}+\hat{c} t e_0,\\
		& \Omega_{t}^{-}=\{(x,y)\in\mathbb{R}^N;\, \max\limits_{1\leq i\leq n}\{x\cdot\nu_i\cos\theta_i+(y-\hat{c}t)\sin\theta_i\}> 0\}=-\R^N\setminus \overline{\mathcal{Q}}  +\hat{c} t e_0,\\
		& \Omega_{t}^{+}=\{(x,y)\in\mathbb{R}^N;\, \max\limits_{1\leq i\leq n}\{x\cdot\nu_i\cos\theta_i+(y-\hat{c}t)\sin\theta_i\}< 0\}=-\mathcal{Q}+\hat{c} t e_0.
	\end{aligned}
	\right.
\end{equation*}
Obviously, $W(t,x,y)$ is an invasion of $0$ by $1$.  By \cite[Theorem~1.11]{MR2898886}, one gets that $W_t(t,x,y)>0$ for all $(t,x,y)\in\mathbb{R}\times\mathbb{R}^N$. If both $W$ and $\widetilde{W}$ are curved fronts of \eqref{1.1} satisfying \eqref{coro-1}, one can trivially replace $V$ and $\widetilde{V}$ with $W$ and $\widetilde{W}$ in Step~2 of the proof of Theorem \ref{thm-existence} to get $W\equiv\widetilde{W}$.

For general $e_0\in\mathbb{S}^{N-1}$, the supersolution of \eqref{1.1} is given by 
$$\overline{W}(t,x)=\min\limits_{1\leq i\leq n}\{U_{e_i}(x\cdot e_i -c_{e_i} t,x)\},$$
and the subsolution of \eqref{1.1} is given by 
$$\underline{W}(t,x)=\max\{U_{e(-x)}(\underline{\xi}(t,x),x)-\varepsilon h(-\alpha X),0\},$$
where $Y=(x\cdot e_0) e_0$, $X=x-(x\cdot e_0) e_0$ and
$$\underline{\xi}(t,x)=\frac{Y-\hat{c}t+\varphi(-\alpha X)/\alpha}{\sqrt{1+|\nabla\varphi(-\alpha X)|^2}}.$$
With the same arguments, we can show the existence and uniqueness of curved fronts for \eqref{1.1}.
\end{proof}


\section{Stability of curved fronts}
In this section, we prove the asymptotic stability of curved fronts in Theorem \ref{thm-existence} and \ref{coro}. We still consider $e_0=(0,0,\cdots,1)$ and \eqref{initial-equation} for convenience.

%

Assume that conditions of Theorem \ref{thm-existence} hold. In order to prove the stability of $V(t,x,y)$, we first construct subsolutions which can be compared with $U_{e_i}((x,y)\cdot e_i -c_{e_i} t,x,y)$ for each $i\in\{1,\cdots,n\}$.  Fix any $i\in \{1,\cdots, n\}$ and consider the facet $\widetilde{Q}_i$ of the polytope $\mathcal{Q}$. Take $\lambda_i>0$  such that  $(1-\lambda_i)e_i\cdot e_0 -\lambda_i  e_j\cdot e_0=(1-\lambda_i)\sin\theta_i-\lambda_{i}\sin\theta_j>0$ for all $j\neq i$. For each $j\in\{1,\cdots,n\}\setminus \{i\}$, define
$$e_{ij}:=\frac{(1-\lambda_i)e_i-\lambda_{i}e_j}{|(1-\lambda_{i})e_i-\lambda_{i}e_j|}.$$
Let $e_{ii}=e_i$. Then, $e_{ij}$ is defined for every $j\in\{1,\cdots,n\}$.
Obviously, $e_{ij}\cdot e_0>0$ for all $j\in \{1,\cdots,n\}$. Let $Q_{ij}$  be the hyperplane determined by $e_{ij}$. In particular, $Q_{ii}=Q_i$. 
Then let $\mathcal{Q}_i$ be  the polytope  enclosed by $Q_{ij}$ and let $\widetilde{Q}_{ij}=\partial\mathcal{Q}_i\cap Q_{ij}$ be the facets of $\mathcal{Q}_i$. Let $H_{jk}=\widetilde{Q}_{ij}\cap\widetilde{Q}_{ik}$ be its ridges for $j\neq k$ and $\mathcal{H}$ be the set of all ridges. In fact, the intersection of $Q_{ij}$ and $Q_i$ is the same as that of $Q_j$ and $Q_i$. Let $\widehat{Q}_{ij}$ and $\widehat{H}_{ij}$ be the projection of $\widetilde{Q}_{ij}$ and $H_{ij}$ on $x$-plane respectively. 
\begin{figure}[H]
	\centering
	\includegraphics[scale=0.3]{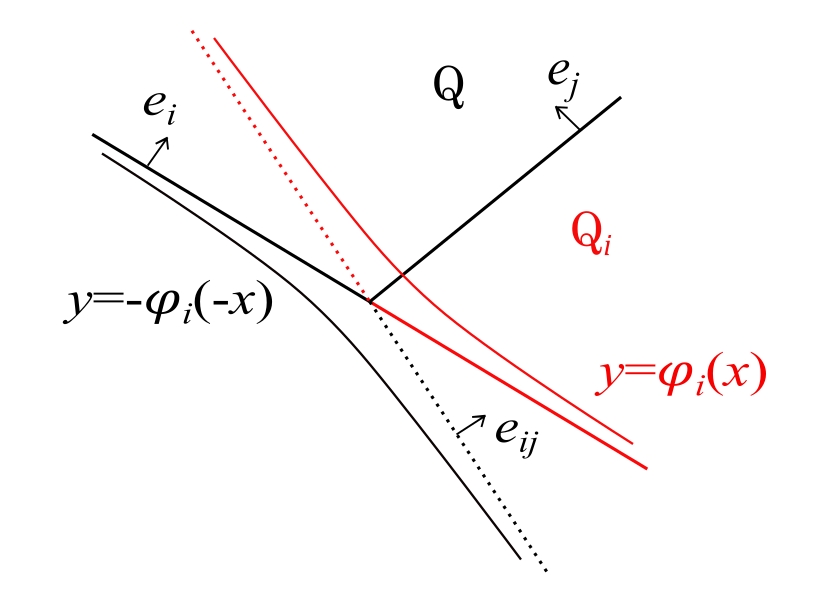}
	\caption{an example of $\mathcal{Q}_i$ and $y=\varphi_i(x)$ in dimension 2.}
\end{figure}
Let $q_{ij}(x,y):=(x,y)\cdot e_{ij}$ and $y=\varphi_i(x)$ be the surface determined by $\sum_{j=1}^n e^{-q_{ij}(x,y)}=1$ as in Section~\ref{subs:surface}. One can see Figure~2 for an example of $\mathcal{Q}_i$ and $y=\varphi_i(x)$ in dimension 2. We write $q_{ij}(x,\varphi_i(x))$ by $\hat{q}_{ij}(x)$. Let $e^i(x)$ be given by \eqref{e(x)} by replacing $\varphi$ with $\varphi_i$. Define
\begin{equation*}
	\underline{V}_i(t,x,y)=U_{e^i(-x)}(\underline{\xi}_i(t,x,y),x,y)-\varepsilon \hat{h}(-\alpha x),
\end{equation*}
where
\begin{equation*}
	\underline{\xi}_i(t,x,y)=\frac{y-\hat{c}t+\varphi_i(-\alpha x)/\alpha}{\sqrt{1+|\nabla\varphi_i(-\alpha x)|^2}}\text{ and }\hat{h}(x)=\sum_{k,l\in\{1,\cdots,n\},k\neq l}e^{-(\hat{q}_{ik}(x)+\hat{q}_{il}(x))}.
\end{equation*}
We prove that $\underline{V}_i(t,x,y)$ is a subsolution of (\ref{initial-equation}) for small enough $\lambda_i$, $\varepsilon$ and $\alpha$.

\begin{lemma}\label{lemma4.1}
For $\lambda_i$ small enough, it holds 
$$
(\underline{\xi}_i)_t+c_{e^i(-x)}\ge C \hat{h}(-\alpha x),\quad \text{ for some }C>0.$$
\end{lemma}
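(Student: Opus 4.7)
The plan is to reduce the inequality to a single bound on $g(e^i(-x))-g(e_i)$ and then replay the three-case analysis of Lemma~\ref{lemma3.1}. A direct computation gives $(\underline\xi_i)_t = -\hat c/\sqrt{1+|\nabla\varphi_i(-\alpha x)|^2}$, while $e^i(-x)\cdot e_0 = 1/\sqrt{1+|\nabla\varphi_i(-\alpha x)|^2}$ since $e_0=(0,\ldots,0,1)$. Using $\hat c=g(e_i)$ together with the identity $c_e = g(e)\,(e\cdot e_0)$ this yields
\begin{equation*}
(\underline\xi_i)_t + c_{e^i(-x)} \;=\; \frac{g(e^i(-x))-g(e_i)}{\sqrt{1+|\nabla\varphi_i(-\alpha x)|^2}},
\end{equation*}
so it is enough to produce $C'>0$ with $g(e^i(-x))-g(e_i)\ge C'\hat h(-\alpha x)$.

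The key structural input is that $g$ is $0$-homogeneous, hence $\nabla g(e_i)\cdot e_i=0$; combined with assumption (iv) of Theorem~\ref{thm-existence}, namely $\nabla g(e_i)\cdot e_j<0$ for $j\neq i$, this gives
\begin{equation*}
\nabla g(e_i)\cdot e_{ij} \;=\; \frac{-\lambda_i\,\nabla g(e_i)\cdot e_j}{|(1-\lambda_i)e_i-\lambda_i e_j|} \;>\; 0 \qquad\text{for every }j\neq i.
\end{equation*}
A Taylor expansion of $g$ at $e_i$ therefore forces $g(e_{ij})>g(e_i)$ as soon as $\lambda_i$ is small enough that the first-order term dominates the quadratic remainder; this is the precise point at which the smallness of $\lambda_i$ enters the argument.

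With these positivities in hand I would write $e^i(-x)=\sum_{j}\tau_{ij}(-x)e_{ij}$ with $\tau_{ij}\ge 0$, in complete analogy with \eqref{eq:e(x)}, and treat three regimes exactly as in Lemma~\ref{lemma3.1}. In Case (a), $-x\in\widehat Q_{ij}$ with $j\neq i$ and $d(-x,\widehat{\mathcal H})\to\infty$, so $e^i(-x)\to e_{ij}$ and the left side is bounded below by a positive constant while $\hat h(-\alpha x)$ decays exponentially. In Case (b), $-x\in\widehat Q_{ii}$ with $d(-x,\widehat{\mathcal H})\to\infty$, so $\tau_{ii}(-x)\to 1$ and $\sum_{j\neq i}\tau_{ij}(-x)\asymp \hat h(-\alpha x)$; Taylor-expanding $g$ at $e_i$ and using $\nabla g(e_i)\cdot e_i=0$ then gives
\begin{equation*}
g(e^i(-x))-g(e_i) \;=\; \sum_{j\neq i}\tau_{ij}(-x)\,\nabla g(e_i)\cdot e_{ij} \;+\; o(\hat h(-\alpha x)) \;\ge\; C_2\hat h(-\alpha x)
\end{equation*}
for some $C_2>0$. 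In Case (c), $d(-x,\widehat{\mathcal H})$ is bounded, so $\hat h(-\alpha x)$ is bounded above and away from zero, and a compactness/contradiction argument furnishes the required uniform positivity of $g(e^i(-x))-g(e_i)$.

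The main obstacle will be Case (c): one has to preclude any sequence $\{x_n\}$ with $d(-x_n,\widehat{\mathcal H})$ bounded along which $e^i(-x_n)\to e_i$. Following the strategy used for \eqref{claim-case1}--\eqref{claim-case2} in Lemma~\ref{lemma3.1}, I would split into the subcase where $\{x_n\}$ stays bounded, in which a limit point $x_0$ leads to a tangent-plane contradiction with the convexity of $\varphi_i$, and the subcase $|x_n|\to\infty$, where the ridge asymptotics of the $\hat q_{ij}$'s force at least two of the $\tau_{ij}(-x_n)$'s to stay bounded away from zero, making it impossible for $e^i(-x_n)=\sum_j \tau_{ij}(-x_n)e_{ij}$ to converge to the single direction $e_i$.
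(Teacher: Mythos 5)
Your overall approach is the same as the paper's: reduce to a lower bound on $g(e^i(-x))-g(e_i)$ via the identity $(\underline\xi_i)_t+c_{e^i(-x)}=(g(e^i(-x))-g(e_i))/\sqrt{1+|\nabla\varphi_i(-\alpha x)|^2}$, observe $\nabla g(e_i)\cdot e_i=0$ and $\nabla g(e_i)\cdot e_{ij}=-\lambda_i\nabla g(e_i)\cdot e_j/|(1-\lambda_i)e_i-\lambda_i e_j|>0$, and then run the three-regime analysis of Lemma~\ref{lemma3.1}. Cases (a) and (b) as you set them out are fine and match the paper's treatment, though you have usefully separated the $j\neq i$ facets from the $j=i$ facet, which the paper leaves implicit.

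The gap is in Case (c). You claim the compactness/contradiction argument "furnishes the required uniform positivity of $g(e^i(-x))-g(e_i)$", but by itself it only yields $|e^i(-x)-e_i|\ge\delta$, equivalently $\sum_{j\neq i}\tau_{ij}(-x)\ge\delta>0$. In Lemma~\ref{lemma3.1} the analogous positivity for the bounded regime followed directly from hypothesis (iii) of Theorem~\ref{thm-existence}, because there $e(x)\in\mathcal{L}(\mathcal{Q})$. Here the interpolated direction $e^i(-x)$ lies near $e_i$ but is generally \emph{not} in $\mathcal{L}(\mathcal{Q})$, and neither (iii) nor your earlier observation $g(e_{ij})>g(e_i)$ at the corner directions $e_{ij}$ transfers automatically to $e^i(-x)=\sum_j\tau_{ij}e_{ij}$, since $g$ has no convexity property. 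To close Case (c) you must still apply the Taylor expansion at $e_i$: using $\nabla g(e_i)\cdot e_i=0$ one gets $\nabla g(e_i)\cdot(e^i(-x)-e_i)=\sum_{j\neq i}\tau_{ij}\nabla g(e_i)\cdot e_{ij}\gtrsim\delta\lambda_i$, while the remainder is $o(\lambda_i)$ because $|e^i(-x)-e_i|=O(\lambda_i)$ globally; for $\lambda_i$ small enough the linear term therefore dominates. This is exactly the step the paper spells out ("By taking $\lambda_i$ small enough and the Taylor expansion, one still has $B''>0$ such that $g(e^i(-x))-g(e_i)\ge B''\lambda_i\hat{h}(-\alpha x)$"), and it is also a second, independent place where the smallness of $\lambda_i$ is used, beyond establishing $g(e_{ij})>g(e_i)$.
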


\begin{proof}
Consider every fixed $i\in\{1,\cdots,n\}$. Let $e_{ij}=(\widetilde{\nu}_j\cos\widetilde{\theta}_j,\sin\widetilde{\theta}_j)$, that is,
$$\widetilde{\nu}_j\cos\widetilde{\theta}_j=\frac{(1-\lambda_i)\nu_i\cos\theta_i-\lambda_i\nu_j\cos\theta_j}{|(1-\lambda_{i})e_i-\lambda_{i}e_j|} \hbox{ and } \sin\widetilde{\theta}_j=\frac{(1-\lambda_i)\sin\theta_i-\lambda_i\sin\theta_j}{|(1-\lambda_{i})e_i-\lambda_{i}e_j|},$$
for $j\neq i$. 
After tedious calculation, one can show that $|e_{ij}-e_i|=O(\lambda_i)$ for every $j\neq i$ and $|\nabla \varphi_i +\nu_i\cot\theta_i|=O(\lambda_i)$ as $\lambda_i\rightarrow 0$.
As \eqref{eq:e(x)}, we compute that 

\begin{equation*}
	e^i(-x)=\sum_{j=1}^n\tau_{ij}(-x)e_{ij}, 
	\end{equation*}
	where
	\begin{equation*}
	\tau_{ij}(-x)=\frac{e^{-\hat{q}_{ij}(-\alpha x)}}{\sqrt{1+|\nabla\varphi_i(-\alpha x)|^2} \sum_{j=1}^{n}e^{-\hat{q}_{ij}(-\alpha x)}\sin\widetilde{\theta}_j}
	\geq 0.
\end{equation*}

On the other hand,
$$(\underline{\xi}_i)_t+c_{e^i(-x)}=\frac{1}{\sqrt{1+|\nabla\varphi_i(-\alpha x)|^2}}\Big(g(e^i(-x))-g(e_i)\Big),\quad \hbox{for $x\in\R^{N-1}$}.$$
By the proof of Lemma~\ref{lemma3.1}, we know that $e^i(-x)\rightarrow e_{ij}$ for $-x\in  \widehat{Q}_{ij}$ as ${\rm dist}(x,\widehat{\mathcal{H}})\rightarrow +\infty$. More subtle calculation gives that, for $-x\in  \widehat{Q}_{ij}$ and ${\rm dist}(x,\widehat{\mathcal{H}})$ sufficiently large, there are positive constants $B_1$, $B_2$, $B_3$ such that
$$ \quad B_1 \hat{h}(-\alpha x)\le \sum_{j\neq i} \tau_{ij}(-x)\le B_2 \hat{h}(-\alpha x),$$
and
$$ \left|\sum_{j=1}^n \frac{\tau_{ij}(-x)}{|(1-\lambda_{i})e_i-\lambda_{i}e_j|}-1\right|\le B_3\lambda_i\hat{h}(-\alpha x).$$
Thus, there exists $\widetilde{B}>0$ such that  
\begin{align*}
|e^i(-x)-e_i|=&\left|\Big(\sum_{j=1}^n \frac{\tau_{ij}(-x)}{|(1-\lambda_{i})e_i-\lambda_{i}e_j|}-1\Big) e_i -\frac{\lambda_{i}}{|(1-\lambda_{i})e_i-\lambda_{i}e_j|} \sum_{j\neq i} \tau_{ij}(-x)  (e_i+e_j)\right|\\
\le& \widetilde{B}\lambda_i \hat{h}(-\alpha x),
\end{align*}
for $-x\in  \widehat{Q}_{ij}$ and ${\rm dist}(x,\widehat{\mathcal{H}})$ sufficiently large. Using the Taylor expansion, one has
\begin{equation*}
	\begin{aligned}
	g(e^i(-x))-g(e_i)&=\nabla g(e_i)\cdot(e^i(-x)-e_i)+o(|e^i(-x)-e_i|)\\
		&=\Big(\sum_{j=1}^n \frac{\tau_{ij}(-x)}{|(1-\lambda_{i})e_i-\lambda_{i}e_j|}-1\Big)\nabla g(e_i)\cdot e_i\\
		&\quad-\frac{\lambda_{i}}{|(1-\lambda_{i})e_i-\lambda_{i}e_j|} \sum_{j\neq i} \tau_{ij}\nabla g(e_i)\cdot (e_i+e_j)+o(|e^i(-x)-e_i|),
	\end{aligned}
\end{equation*}
for $-x\in  \widehat{Q}_{ij}$ as ${\rm dist}(x,\widehat{\mathcal{H}})\rightarrow +\infty$ or $\lambda_i\rightarrow 0$. Then it follows from $\nabla g(e_i)\cdot e_i=0$ and (iv) of Theorem \ref{thm-existence} that there is $B'>0$ such that
$$g(e^i(-x))-g(e_i)\ge B'\lambda_i \hat{h}(-\alpha x),$$
for $-x\in  \widehat{Q}_{ij}$ as ${\rm dist}(x,\widehat{\mathcal{H}})\rightarrow+\infty$ or $\lambda_i\rightarrow 0$.

If $-x\in  \widehat{Q}_{ij}$ and ${\rm dist}(x,\widehat{\mathcal{H}})<+\infty$, by replacing $\tau_i$ ($i=1,\cdots,n$) in the proof of Lemma~\ref{lemma3.1} with $\tau_{ij} $ ( $j=1,\cdots,n$), one has that
$$ \sum_{j\neq i} \tau_{ij}(x)\ge \delta>0,$$
for some $\delta>0$.
By taking $\lambda_i$ small enough and the Taylor expansion, one still has $B''>0$ such that
$$g(e^i(-x))-g(e_i)\ge B''\lambda_i \hat{h}(-\alpha x).$$
This completes the proof.
\end{proof}

\vskip 0.3cm

Take $\lambda_i>0$ such that Lemma~\ref{lemma4.1} holds.

\begin{lemma}\label{stab-subsolution}
	There exists $\varepsilon_0>0$ and $\alpha(\varepsilon_0)>0$ such that for $0<\varepsilon<\varepsilon_0$ and $0<\alpha<\alpha(\varepsilon_0)$, the function $\underline{V}_i(t,x,y)$ is a subsolution of (\ref{initial-equation}) satisfying
	\begin{equation*}
		\begin{split}
		|\underline{V}_i(t,x,y)-U_{e_i}(x\cdot\nu_i\cos\theta_i+y\sin\theta_i -c_{e_i}t,x,y)|\leq2\varepsilon,
		\end{split}
	\end{equation*}
	 uniformly for $(t,x,y)\in\R\times\left(-\widetilde{Q}_{ii}+\hat{c}te_0\right)$ as $d((t,x,y),-\mathcal{H}+\hat{c}te_0)\to+\infty$
	and
	\begin{equation*}
		\underline{V}_i(t,x,y)\leq U_{e_i}(x\cdot\nu_i\cos\theta_i+y\sin\theta_i -c_{e_i}t,x,y),\text{ for }(t,x,y)\in\mathbb{R}\times\mathbb{R}^N.
	\end{equation*}
\end{lemma}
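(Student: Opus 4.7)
My approach mirrors the construction-and-comparison scheme in Section~\ref{subs:subsup} and Lemma~\ref{supV}, adapted to the interior pyramid determined by $\{e_{ij}\}_{j=1}^n$. First, I verify that $\underline{V}_i$ is a subsolution: its definition parallels that of $\underline{W}$ in \eqref{eq:subsolution} with the surface $\varphi$ replaced by $\varphi_i$, the function $h$ by $\hat{h}$, and the normal $e(-x)$ by $e^i(-x)$, and Lemma~\ref{lemma4.1} supplies the exact analogue of the condition \eqref{claim-sub}. The remaining ingredients in the subsolution case of Lemma~\ref{supersolution}---the decay bounds of Lemma~\ref{pulsating-lemma-1}, the differentiability estimates of Lemma~\ref{pulsating-lemma-3}, and the surface estimates of Lemma~\ref{surface-lemma}---are direction-uniform and transfer verbatim, so the subsolution property holds for $\varepsilon$ and $\alpha$ small enough. (Note that, unlike $\underline{W}$, the function $\underline{V}_i$ is not capped by $0$, but since $f$ has been extended with $-f_u\ge\kappa$ on $(-\infty,\sigma]$, this is exactly what the tail argument of Lemma~\ref{supersolution} uses.)

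\textbf{Next, the asymptotic bound on $-\widetilde{Q}_{ii}+\hat{c}te_0$} I adapt from Step~1 of Lemma~\ref{supV}. When $(x,y)-\hat{c}te_0\in-\widetilde{Q}_{ii}$ and $d((t,x,y),-\mathcal{H}+\hat{c}te_0)\to+\infty$, the projection $-x$ lies in $\widehat{Q}_{ii}$ with $d(-x,\widehat{\mathcal{H}})\to+\infty$; the defining identity $\sum_j e^{-\hat{q}_{ij}(-\alpha x)}=1$ forces $\hat{q}_{ii}(-\alpha x)\to 0$ and $\hat{q}_{ij}(-\alpha x)\to+\infty$ for $j\ne i$. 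Lemma~\ref{surface-lemma} for $\varphi_i$ then yields $\varphi_i(-\alpha x)/\alpha\to x\cdot\nu_i\cot\theta_i$ and $\nabla\varphi_i(-\alpha x)\to-\nu_i\cot\theta_i$, whence $e^i(-x)\to e_i$, $\hat{h}(-\alpha x)\to 0$, and
\[
\underline{\xi}_i(t,x,y)\to x\cdot\nu_i\cos\theta_i+(y-\hat{c}t)\sin\theta_i=x\cdot\nu_i\cos\theta_i+y\sin\theta_i-c_{e_i}t,
\]
using $c_{e_i}=\hat{c}\sin\theta_i$. The Lipschitz regularity of $U_e$ in $e$ (Lemma~\ref{pulsating-lemma-3}) together with its uniform continuity in $\xi$ (Lemma~\ref{pulsating-lemma-2}) then give $|\underline{V}_i-u_i|\le 2\varepsilon$ once we are sufficiently far from $-\mathcal{H}+\hat{c}te_0$, where $u_i(t,x,y):=U_{e_i}(x\cdot\nu_i\cos\theta_i+y\sin\theta_i-c_{e_i}t,x,y)$.

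\textbf{The global inequality $\underline{V}_i\le u_i$} I obtain by the scheme of Step~2 of Lemma~\ref{supV}. The front $u_i$ is a classical solution, hence both a sub- and super-invasion of $0$ by $1$, with interface the affine hyperplane $Q_i+\hat{c}te_0$, while $\underline{V}_i$ is a sub-invasion with interface the reflected convex surface $\{y=\hat{c}t-\varphi_i(-\alpha x)/\alpha\}$. Since $\varphi_i$ lives in $\mathcal{Q}_i\subset\{z\cdot e_i\ge 0\}$, we have $\varphi_i(z)\ge -z\cdot\nu_i\cot\theta_i$, i.e.\ $\varphi_i(-\alpha x)/\alpha\ge x\cdot\nu_i\cot\theta_i$; the interface of $\underline{V}_i$ thus sits pointwise at or below the interface of $u_i$, which translates, in the sub/super-invasion language, to $\widetilde{\Omega}^-_{t,(u_i)}\subset\Omega^-_{t,(\underline{V}_i)}$. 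Lemma~\ref{pulsating-lemma-5} then delivers a smallest $T_{\min}\in\mathbb{R}$ with $u_i(t+T_{\min},x,y)\ge\underline{V}_i(t,x,y)$; the previous step gives $0\le T_{\min}<+\infty$. To rule out $T_{\min}>0$ I run the dichotomy of Step~2 of Lemma~\ref{supV}: if the infimum of $u_i(t+T_{\min})-\underline{V}_i(t)$ over the complement of the tail regions $\omega_m^-\cup\Omega_{\widetilde{m}}^+$ is strictly positive, the bistable assumption (A3) lets me slide $T_{\min}$ down a little and contradict minimality; otherwise I extract a minimising sequence, note that Step~2 confines it to bounded distance from $-\mathcal{H}+\hat{c}t_ne_0$, shift each point backwards in time to a region where the gap is already strict, and invoke linear parabolic estimates to obtain a contradiction.

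\textbf{The hardest point} is excluding the vanishing-gap alternative of the dichotomy at infinity: on each neighbouring facet $-\widetilde{Q}_{ij}$ ($j\ne i$), $\underline{V}_i$ approaches a pulsating front $U_{e_{ij}}$ rather than $u_i$, and because $e_{ij}\notin\mathcal{L}(\mathcal{Q})$, condition~(iii) of Theorem~\ref{thm-existence} does not directly bound $g(e_{ij})$. The substitute is the purely geometric \emph{strict} inequality $\varphi_i(-\alpha x)/\alpha>x\cdot\nu_i\cot\theta_i$ off the reflected hyperplane $Q_i$, which pushes $\underline{V}_i$'s interface strictly below that of $u_i$ outside a neighbourhood of $-\mathcal{H}+\hat{c}te_0$ and supplies the uniform positive gap needed to close the contradiction.
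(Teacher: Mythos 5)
Your proposal is correct and takes essentially the same route as the paper, whose proof of this lemma is deliberately terse: it observes that $\underline{V}_i$ has the same structure as $\underline{W}$, invokes Lemma~\ref{supersolution} together with Lemma~\ref{lemma4.1} for the subsolution property, points to Step~1 of Lemma~\ref{supV} for the asymptotic bound, and points to Lemma~\ref{pulsating-lemma-5} together with Step~2 of Lemma~\ref{supV} for the global inequality $\underline{V}_i\le u_i$. You fill in two details the paper leaves implicit: (1) that $\underline{V}_i$, unlike $\underline{W}$, is not truncated by $\max\{\cdot,0\}$, which is harmless precisely because $f$ has been extended so that $-f_u\ge\kappa$ on $(-\infty,\sigma]$; and (2) the interface containment $\widetilde\Omega^-_{(u_i)}\subset\Omega^-_{(\underline{V}_i)}$ needed to invoke Lemma~\ref{pulsating-lemma-5}, which you derive from the convexity fact $\varphi_i(z)\ge -z\cdot\nu_i\cot\theta_i$ (i.e.\ $q_{ii}(z,\varphi_i(z))\ge 0$) — the paper simply asserts the analogous containment in Lemma~\ref{3.4} as "easy to see." Your framing of the final paragraph slightly misattributes where condition~(iii) of Theorem~\ref{thm-existence} is used — the relevant control on $g$ along $e_{ij}$ is precisely what Lemma~\ref{lemma4.1} delivers via condition~(iv) and the Taylor expansion, and is consumed in the subsolution check rather than in the dichotomy — but the geometric strict separation of interfaces away from $-\widetilde Q_{ii}$ that you invoke is indeed correct and does underpin the exclusion of the vanishing-gap alternative, so the argument closes properly.
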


\begin{proof}
For every $i\in\{1,\cdots,n\}$, we observe that $\underline{V}_i(t,x,y)$ is exactly identical in the structure with $\underline{W}(t,x,y)$ given by \eqref{eq:subsolution}. By Lemma~\ref{supersolution},  $\underline{V}_i(t,x,y)$ is a subsolution of \eqref{initial-equation}. Since $e_{ii}=e_i$, one can follow the same proof of Lemma \ref{3.4} to get that
\begin{equation*}
		\begin{split}
		|\underline{V}_i(t,x,y)-U_{e_i}(x\cdot\nu_i\cos\theta_i+y\sin\theta_i -c_{e_i}t,x,y)|\leq2\varepsilon,
		\end{split}
	\end{equation*}
	 uniformly for $(t,x,y)\in\R\times\left(-\widetilde{Q}_{ii}+\hat{c}te_0\right)$ as $d((t,x,y),-\mathcal{H}+\hat{c}te_0)\to+\infty$
	and
	\begin{equation*}
		\underline{V}_i(t,x,y)\leq U_{e_i}(x\cdot\nu_i\cos\theta_i+(y-\hat{c}t)\sin\theta_i,x,y),\text{ for }(t,x,y)\in\mathbb{R}\times\mathbb{R}^N,
	\end{equation*}
	by comparing $\underline{V}_i(t,x,y)$ and $ U_{e_i}(x\cdot\nu_i\cos\theta_i+(y-\hat{c}t)\sin\theta_i,x,y)$ using Lemma~\ref{pulsating-lemma-5}.
\end{proof}

\vskip 0.3cm

We also need the following properties by referring  to Lemma 2.7, Lemma 2.12 of \cite{preprint1} for the proofs.

\begin{lemma}\label{stab-lemma1}
For any $R>0$, it holds that
\begin{equation*}
	y-\frac{1}{\alpha}\varphi(\alpha x)\to-\infty\text{ for }(x,y)\in\mathbb{R}^N \text{ such that }d((x,y),\mathcal{R})<R\text{ as }\alpha\to 0^+,
\end{equation*}
where $\varphi(x)$ is the surface determined by $\{e_i\}_{i=1}^n$.
\end{lemma}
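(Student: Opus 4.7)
The plan is to exploit the implicit equation \eqref{surface} defining $\varphi$, together with its exact scaling. Writing $Y(x;\alpha):=\varphi(\alpha x)/\alpha$ and using that each $q_k(x,y)=x\cdot\nu_k\cos\theta_k+y\sin\theta_k$ is linear in $(x,y)$, one checks directly that the rescaled surface satisfies $\sum_{k=1}^{n} e^{-\alpha q_k(x,Y(x;\alpha))}=1$. Since $\partial_y\bigl(\sum_k e^{-\alpha q_k(x,y)}\bigr)=-\alpha\sum_k \sin\theta_k\, e^{-\alpha q_k}<0$, the map $y\mapsto \sum_k e^{-\alpha q_k(x,y)}$ is strictly decreasing, so it is enough to produce a uniform lower bound on $\sum_k e^{-\alpha q_k(x,y)}$ that exceeds $1$, and then to quantify how large the gap $Y(x;\alpha)-y$ must be in order for the sum to drop back to $1$.

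For the lower bound, observe that any $(x,y)$ with $d((x,y),\mathcal{R})<R$ lies within Euclidean distance $R$ of some ridge $\mathcal{R}_{ij}=\widetilde{Q}_i\cap\widetilde{Q}_j$. Since $q_i$ and $q_j$ vanish on $\mathcal{R}_{ij}$ and the linear functions $q_k$ satisfy $|\nabla_{x,y}q_k|=|e_k|=1$, one obtains $|q_i(x,y)|\le R$ and $|q_j(x,y)|\le R$, which gives
\begin{equation*}
\sum_{k=1}^{n} e^{-\alpha q_k(x,y)}\ \ge\ e^{-\alpha q_i(x,y)}+e^{-\alpha q_j(x,y)}\ \ge\ 2e^{-\alpha R}\ >\ 1
\end{equation*}
whenever $\alpha R<\log 2$. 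By the monotonicity recorded above, the gap $\Delta(x;\alpha):=Y(x;\alpha)-y$ is then strictly positive for all sufficiently small $\alpha$ and all such $(x,y)$.

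To turn this into a quantitative estimate, substitute $y+\Delta$ into the defining equation, use $\sin\theta_k\le 1$ together with $\Delta\ge 0$ to get $e^{-\alpha\Delta\sin\theta_k}\ge e^{-\alpha\Delta}$, and conclude
\begin{equation*}
1\ =\ \sum_{k=1}^{n} e^{-\alpha q_k(x,y)-\alpha\Delta\sin\theta_k}\ \ge\ e^{-\alpha\Delta}\sum_{k=1}^{n} e^{-\alpha q_k(x,y)}\ \ge\ 2e^{-\alpha R}\,e^{-\alpha\Delta},
\end{equation*}
hence $\alpha\Delta\ge \log 2-\alpha R$ and therefore
\begin{equation*}
y-\frac{1}{\alpha}\varphi(\alpha x)\ =\ -\Delta\ \le\ R-\frac{\log 2}{\alpha}\ \longrightarrow\ -\infty,
\end{equation*}
as $\alpha\to 0^+$, with the bound uniform over all $(x,y)$ satisfying $d((x,y),\mathcal{R})<R$. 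No serious obstacle is expected; the only thing that requires care is the transfer from the Euclidean distance to $\mathcal{R}$ to the scalar bounds on $q_i(x,y)$ and $q_j(x,y)$, but this is immediate from the $1$-Lipschitz continuity of each $q_k$.
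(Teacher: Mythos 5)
Your proof is correct, and it is self-contained; the paper itself does not prove this lemma but cites Lemma~2.7 of \cite{preprint1} for it, so there is no in-paper argument to compare against. The argument is tight at every step: the scaling identity $\sum_k e^{-\alpha q_k(x,Y(x;\alpha))}=1$ follows from linearity of $q_k$; strict decrease of $y\mapsto\sum_k e^{-\alpha q_k(x,y)}$ is immediate from $\sin\theta_k>0$; the transfer from $d((x,y),\mathcal{R})<R$ to $|q_i|,|q_j|\le R$ for the two indices defining the nearby ridge uses exactly the $1$-Lipschitz property $|\nabla q_k|=|e_k|=1$; and the final inequality $1\ge 2e^{-\alpha R}e^{-\alpha\Delta}$ correctly uses $e^{-\alpha\Delta\sin\theta_k}\ge e^{-\alpha\Delta}$ for $\Delta\ge 0$, giving the explicit bound $y-\varphi(\alpha x)/\alpha\le R-(\log 2)/\alpha$, uniform over the set in question. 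This establishes the claim with a rate, which is slightly more than the stated qualitative divergence; no gap.
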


\begin{lemma}\label{stab-lemma2}
	For any $R>0$, it holds that
	\begin{equation*}
		y-\frac{1}{\alpha}\varphi_i(\alpha x)\to -\infty\text{ for }(x,y)\in\mathbb{R}^N \text{ such that }d((x,y),\mathcal{\mathcal{H}})<R\text{ as }\alpha\to 0^+,
	\end{equation*}
	where $\varphi_i(x)$ is the surface determined by $\{e_{ij}\}_{j=1}^n$.
\end{lemma}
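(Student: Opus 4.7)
The plan is to exploit two structural facts about $\varphi_i$ combined with the homogeneity of the ridge set $\mathcal{H}$. First, the surface $y=\varphi_i(x)$ sits strictly above $\partial\mathcal{Q}_i$ whenever $(x,y)$ is close to a ridge, with a uniform positive gap. Second, the ridges $H_{jk}$ of $\mathcal{Q}_i$ are linear cones through the origin (since every $Q_{ij}$ passes through $0$), so this gap persists under the rescaling $x\mapsto\alpha x$, and dividing by $\alpha$ then amplifies it by $1/\alpha$.

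I would first quantify the gap. Fix a ridge $H_{jk}$ and a point $(x',y')\in H_{jk}$; then $q_{ij}(x',y')=q_{ik}(x',y')=0$, so evaluating the defining identity $\sum_{\ell}e^{-q_{i\ell}(x',y)}=1$ at $y=y'$ gives a sum that is at least $2$. Writing $\delta:=\varphi_i(x')-y'>0$, the identity forces
$$e^{-\delta\sin\widetilde{\theta}_j}+e^{-\delta\sin\widetilde{\theta}_k}\leq 1,$$
which yields a lower bound $\delta\geq c_0>0$ depending only on the finite collection of angles $\{\widetilde{\theta}_\ell\}_\ell$ (take the minimum over the finitely many pairs $j\neq k$). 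By homogeneity, $\alpha(x',y')\in H_{jk}$ for every $\alpha>0$, so the same argument applied at $\alpha(x',y')$ gives $\varphi_i(\alpha x')-\alpha y'\geq c_0$, i.e.
$$\frac{\varphi_i(\alpha x')}{\alpha}\geq y'+\frac{c_0}{\alpha}.$$

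Next I would use that $\varphi_i$ is globally Lipschitz: the analogue of Lemma~\ref{surface-lemma} (applied with the vectors $\{e_{ij}\}$ in place of $\{e_i\}$) bounds $|\nabla\varphi_i|$ uniformly by some constant $L$, so for $|(x,y)-(x',y')|\leq R$ one has $|\varphi_i(\alpha x)-\varphi_i(\alpha x')|\leq L\alpha R$ and hence
$$\left|\frac{\varphi_i(\alpha x)}{\alpha}-\frac{\varphi_i(\alpha x')}{\alpha}\right|\leq LR.$$
Combining the ingredients, for any $(x,y)$ with $d((x,y),\mathcal{H})<R$, pick $(x',y')\in\mathcal{H}$ with $|(x,y)-(x',y')|<R$; then $|y-y'|<R$ and
$$y-\frac{\varphi_i(\alpha x)}{\alpha}\leq (y'+R)-\Big(y'+\frac{c_0}{\alpha}-LR\Big)=(1+L)R-\frac{c_0}{\alpha}\longrightarrow-\infty$$
as $\alpha\to 0^+$, uniformly in $(x,y)$, which is the claim.

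The main subtlety, and the point that requires careful verification, is the \emph{uniformity} of the constants $c_0$ and $L$ over the whole (unbounded) set $\mathcal{H}$: $c_0$ is uniform because there are only finitely many pairs $(j,k)$, and the unboundedness of $\mathcal{H}$ causes no trouble because $y'$ cancels in the final line thanks to the conical structure of the ridges.
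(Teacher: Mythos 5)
Your proof is correct. The paper itself does not prove this lemma; it cites Lemmas~2.7 and 2.12 of \cite{preprint1}, so a line-by-line comparison is not possible, but your argument is a valid self-contained reconstruction. The three ingredients you isolate all check out: (1) at a ridge point $(x',y')\in H_{jk}$ one has $q_{ij}(x',y')=q_{ik}(x',y')=0$, so $\sum_\ell e^{-q_{i\ell}(x',y')}\geq 2$ and the gap $\delta=\varphi_i(x')-y'$ must satisfy $e^{-\delta\sin\widetilde\theta_j}+e^{-\delta\sin\widetilde\theta_k}\leq 1$, which forces $\delta\geq c_0>0$ uniformly since there are only finitely many pairs and $\sin\widetilde\theta_\ell>0$ by the choice of $\lambda_i$; (2) $\mathcal{Q}_i$, its boundary, and hence each ridge $H_{jk}$ are cones through the origin because every hyperplane $Q_{i\ell}$ passes through $0$, so the gap rescales as $\varphi_i(\alpha x')-\alpha y'\geq c_0$ for all $\alpha>0$; (3) the implicit-function formula $\nabla\varphi_i=-\big(\sum_\ell e^{-\hat q_{i\ell}}\widetilde\nu_\ell\cos\widetilde\theta_\ell\big)/\big(\sum_\ell e^{-\hat q_{i\ell}}\sin\widetilde\theta_\ell\big)$ gives a global Lipschitz bound $|\nabla\varphi_i|\leq\max_\ell|\cos\widetilde\theta_\ell|/\min_\ell\sin\widetilde\theta_\ell$, not merely the near-facet estimate of Lemma~\ref{surface-lemma}. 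Combining these as you do yields $y-\varphi_i(\alpha x)/\alpha\leq(1+L)R-c_0/\alpha\to-\infty$, uniformly for $d((x,y),\mathcal{H})<R$. The one point worth stating explicitly in a final write-up is that the lower bound $c_0$ is obtained from the convexity of $\varphi_i$ (equivalently, the fact that the surface lies strictly inside $\mathcal{Q}_i$), which guarantees $\varphi_i(x')>y'$ before one even starts estimating $\delta$; otherwise the inequality $e^{-\delta\sin\widetilde\theta_j}+e^{-\delta\sin\widetilde\theta_k}\leq 1$ is derived from dropping the remaining positive terms $e^{-q_{i\ell}(x',\varphi_i(x'))}$, $\ell\neq j,k$, exactly as you indicate.
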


By trivially extending the proof of Lemma 3.3 of \cite{MR4334733} to N-dimensional spaces, one has the following sub and supersolutions for the Cauchy problem (\ref{Chaucy-problem}).

\begin{lemma}\label{stab-lemma3}
If $u(t,x,y)\in C^{1,2}(\mathbb{R}\times\mathbb{R}^N)$ is a supersolution (resp. subsolution) of (\ref{initial-equation}) with $u_t>0$, and for any $\sigma_1\in(0,1/2)$, there is $k>0$ such that
\begin{equation*}
u_t\geq k,\quad \text{ for }(t,x,y)\in\mathbb{R}\times\mathbb{R}^N\text{ such that }\sigma_1\leq u(t,x,y)\leq 1-\sigma_1,
\end{equation*}
then for any $\delta\in(0,2/\sigma)$, where $\sigma$ is defined in (A3), there exist positive constants $\omega$ and $\lambda$ such that
\begin{equation*}
u^+(t,x,y):=u(t+\omega\delta(1-e^{-\lambda t}),x,y)+\delta e^{-\lambda t}\text{ is a supersolution of (\ref{initial-equation})}.
\end{equation*}
\begin{equation*}
	(\text{resp. }u^-(t,x,y):=u(t-\omega\delta(1-e^{-\lambda t}),x,y)-\delta e^{-\lambda t}\text{ is a subsolution of (\ref{initial-equation})}).
\end{equation*}
\end{lemma}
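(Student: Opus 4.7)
The plan is to carry out the classical Fife--McLeod perturbation calculation. I set $v(t,x,y):=u^+(t,x,y)=u(s,x,y)+\delta e^{-\lambda t}$ with $s:=t+\omega\delta(1-e^{-\lambda t})$, compute $\partial_t v$, $\nabla v$, $\nabla\cdot(A\nabla v)$, and then use the supersolution inequality for $u$ at time $s$ together with the mean value theorem in the $u$-variable of $f$ to collect all remaining terms into a single expression in which $\omega$ and $\lambda$ are the only free parameters.

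Concretely, a direct differentiation gives
$$v_t=u_t(s,x,y)\bigl(1+\omega\delta\lambda e^{-\lambda t}\bigr)-\delta\lambda e^{-\lambda t},\qquad \nabla\cdot(A\nabla v)=\bigl(\nabla\cdot(A\nabla u)\bigr)(s,x,y).$$
Writing $f(x,u(s,x,y)+\delta e^{-\lambda t})-f(x,u(s,x,y))=f_u(x,\theta)\,\delta e^{-\lambda t}$ for some $\theta$ between $u(s,x,y)$ and $u(s,x,y)+\delta e^{-\lambda t}$, and subtracting the supersolution inequality for $u$ at $(s,x,y)$, one arrives at
$$v_t-\nabla\cdot(A\nabla v)-f(x,v)\ \geq\ \delta e^{-\lambda t}\bigl[\omega\lambda\,u_t(s,x,y)-\lambda-f_u(x,\theta)\bigr].$$
The whole problem reduces to making the bracket non-negative everywhere.

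I would then split into two regimes according to the value of $u(s,x,y)$. In the outer regime, when $u(s,x,y)\leq\sigma-\delta e^{-\lambda t}$ (so $\theta\leq\sigma$) or $u(s,x,y)\geq 1-\sigma$ (so $\theta\geq 1-\sigma$), hypothesis (A3) gives $-f_u(x,\theta)\geq\kappa$, and since $u_t\geq 0$ the bracket is at least $\kappa-\lambda$, which is non-negative as soon as $\lambda\leq\kappa$. In the intermediate regime, $u(s,x,y)$ lies in a compact subinterval $[\sigma_1,1-\sigma_1]\subset(0,1)$ of size depending only on $\sigma$ and $\delta$; by the assumed lower bound applied with this $\sigma_1$ one has $u_t(s,x,y)\geq k$ for some $k=k(\sigma_1)>0$, so the bracket is bounded below by $\omega\lambda k-\lambda-M$ with $M:=\sup_{\mathbb{R}^N\times[0,1]}|f_u|$, which is non-negative once $\omega\geq(\lambda+M)/(\lambda k)$.

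Hence the parameters should be chosen in the order: first $\lambda\in(0,\kappa)$, next the smallness of $\delta$ (relative to $\sigma$) determines the intermediate threshold $\sigma_1$, then $k=k(\sigma_1)$ is supplied by the hypothesis, and finally $\omega$ is taken large enough. The subsolution statement is treated by exactly the same calculation with signs reversed, setting $v(t,x,y):=u(s,x,y)-\delta e^{-\lambda t}$ with $s:=t-\omega\delta(1-e^{-\lambda t})$. The main point, rather than any individual computation, will be the careful case split together with this ordered choice of constants; the bistable structure of $f$ near $0$ and $1$ makes the outer regime essentially free, while the positive lower bound on $u_t$ on compact subintervals is precisely what makes the intermediate regime tractable.
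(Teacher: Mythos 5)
Your computation and case split are correct and follow exactly the classical Fife--McLeod perturbation argument, which is precisely what the paper invokes here: the text gives no proof of Lemma~\ref{stab-lemma3} but simply cites Lemma~3.3 of \cite{MR4334733}, whose proof is this same calculation. So your proposal matches the intended proof in both substance and order of quantifier choices ($\lambda<\kappa$ first, then $\sigma_1$ determined by $\sigma$ and $\delta$, then $k=k(\sigma_1)$, then $\omega$ large). One small point worth flagging: the stated hypothesis ``$\delta\in(0,2/\sigma)$'' must be a misprint for ``$\delta\in(0,\sigma/2)$'' (the bound used in \cite{MR4334733}); your intermediate regime argument implicitly uses that $\delta e^{-\lambda t}\le\delta<\sigma/2$ to keep $u(s,x,y)$ in a compact subinterval of $(0,1)$, which is indeed the only place $\delta$ needs to be small relative to $\sigma$, so you were right to rely on it.
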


Now we are ready to prove the stability of the curved front.

\begin{proof}[Proof of Theorem \ref{thm-stability}]
For any $\delta\in(0,\sigma/2)$, take $\varepsilon_0$ and $\alpha(\varepsilon_0)$ such that Lemma \ref{supersolution} and Lemma
\ref{stab-subsolution} hold for $0<\varepsilon\leq\varepsilon_0$ and $0<\alpha\leq\alpha(\varepsilon_0)$. Let $\overline{V}(t,x,y)$ and $\underline{V}_i(t,x,y)$ be defined in Lemma \ref{supersolution} and Lemma
\ref{stab-subsolution}. Recall that $\underline{V}(t,x,y)=\max\limits_{1\leq i\leq n}\{U_{e_i}(x\cdot\nu_i\cos\theta_i+(y-\hat{c}t)\sin\theta_i,x,y)\}$. Let $\tilde{V}(t,x,y)=\max\limits_{1\leq i\leq n}\{\underline{V}_i(t,x,y)\}$. Then, it follows from Lemma \ref{stab-subsolution} and the definition of $\underline{V}(t,x,y)$ that
\begin{equation}\label{stab-proof-1}
\tilde{V}(t,x,y)\leq\underline{V}(t,x,y)\text{ for }(t,x,y)\in\mathbb{R}\times\mathbb{R}^N,
\end{equation}
and
\begin{equation}\label{eq:tildeV}
|\tilde{V}(t,x,y)-\underline{V}(t,x,y)|\leq 2\varepsilon,\quad \text{ uniformly as }d((t,x,y),\mathcal{R}+\hat{c} t e_0) \to+\infty.
\end{equation}

Let $u_0(x,y)$ be the initial value given in Theorem \ref{thm-stability}. One knows from (\ref{Chaucy-problem-initialvalue}) that, there is $R_{\delta}>0$ such that
\begin{equation}\label{stab-proof-2}
\underline{V}(0,x,y)-\frac{\delta}{2}\leq u_0(x,y)\leq \underline{V}(0,x,y)+\frac{\delta}{2}\text{ for }(x,y)\in\mathbb{R}^N\text{ such that }d((x,y),\mathcal{R})> R_{\delta}.
\end{equation}
In order to construct sub and supersolutions of (\ref{Chaucy-problem}), we define
\begin{equation*}
V^+=\overline{V}(t+\omega\delta(1-e^{-\lambda t}),x,y)+\delta e^{-\lambda t},
\end{equation*}
and
\begin{equation*}
V^-=\max\limits_{1\leq i\leq n}\{\underline{V}_i(t-\omega\delta(1-e^{-\lambda t}),x,y)-\delta e^{-\lambda t}\}.
\end{equation*}
Then, for $(x,y)\in\mathbb{R}^N$ such that $d((x,y),\mathcal{R})> R_{\delta}$, it follows from (\ref{lemma-supersolu-2}), (\ref{stab-proof-1}) and (\ref{stab-proof-2}) that, even if it means increasing $R_{\delta}$,
\begin{equation*}
V^+(0,x,y)\geq \underline{V}(0,x,y)+\delta\geq u_0(x,y)\geq\tilde{V}(0,x,y)-\delta=V^-(0,x,y).
\end{equation*}
As for $(x,y)\in\mathbb{R}^N$ such that $d((x,y),\mathcal{R})\leq R_{\delta}$, one knows from Lemma \ref{stab-lemma1} that $\overline{V}(0,x,y)\geq 1-\delta$ by taking $\alpha\in(0,\alpha(\varepsilon_0)]$ small enough. Thus, $V^+(0,x,y)=\overline{V}(0,x,y)+\delta\geq 1\geq u_0(x,y)$ since $0\leq u_0(x,y)\leq 1$. Similarly, Lemma \ref{stab-lemma2} implies that $\underline{V}_i(0,x,y)\leq\delta$ for each $i\in\{1,\cdots,n\}$. Therefore, $V^-(0,x,y)=\max\limits_{1\leq i\leq n}\{\underline{V}_i(0,x,y)-\delta\}\leq 0\leq u_0(x,y)$. In conclusion,
\begin{equation*}
V^-(0,x,y)\leq u_0(x,y)\leq V^+(0,x,y),  \text{ for }(x,y)\in\mathbb{R}^N.
\end{equation*}
By Lemma \ref{supersolution} and Lemma \ref{stab-subsolution}, one can easily check that the conditions given in Lemma \ref{stab-lemma3} are all hold for $\overline{V}(t,x,y)$ and $\underline{V}_i(t,x,y),i\in\{1,\cdots,n\}$. Thus, it follows from Lemma \ref{stab-lemma3} and comparison principle that
\begin{equation}\label{stab-proof-3}
V^-(t,x,y)\leq u(t,x,y)\leq V^+(t,x,y),\text{ for }t\geq0\text{ and }(x,y)\in\mathbb{R}^N.
\end{equation}
Take a sequence $\{t_n\}_{n\in\mathbb{N}}=\{nL_n/\hat{c}\}_{n\in\mathbb{N}}$ where $L_n$ is the period of $y$. Then, by parabolic estimates, the sequence $u_n(t,x,y):=u(t+t_n,x,y+nL_n)$ converges to a solution $u_{\infty}(t,x,y)$ of (\ref{initial-equation}), locally uniformly in $\mathbb{R}\times\mathbb{R}^N$. A direct computation gives that $\overline{V}(t+t_n,x,y+nL_n)=\overline{V}(t,x,y)$ and $\underline{V}_i(t+t_n,x,y+nL_n)=\underline{V}_i(t,x,y)$ for every $i\in\{1,\cdots,n\}$. By (\ref{stab-proof-3}), one has that
\begin{equation}\label{stab-proof-4}
\begin{aligned}
\max\limits_{1\leq i\leq n}\{\underline{V}_i(t-\omega\delta(1-e^{-\lambda(t+t_n)}),x,y)&-\delta e^{-\lambda(t+t_n)}\}\leq u_n(t,x,y)\\
&\leq \overline{V}(t+\omega\delta(1-e^{-\lambda(t+t_n)}),x,y)+\delta e^{-\lambda(t+t_n)}.
\end{aligned}
\end{equation}
Let $n\to+\infty$, then $t_n\to+\infty$ and $u_{\infty}(t,x,y)$ satisfies
\begin{equation}\label{stab-proof-5}
\tilde{V}(t-\omega\delta,x,y)\leq u_{\infty}(t,x,y)\leq \overline{V}(t+\omega\delta,x,y),\text{ for }(t,x,y)\in\mathbb{R}\times\mathbb{R}^N.
\end{equation}

Let $u(t,x,y;u_0(x,y))$ denote the solution of the following Cauchy problem for $t\geq t_0$
\begin{equation}\
	\left\{
	\begin{aligned}
		&u_t-\nabla_x\cdot(A(x,y)\nabla_{x,y} u)=f(x,y,u), &&t>t_0,(x,y)\in\mathbb{R}^N,\\
		&u(t_0,x,y)=u_0(x,y),  && t=t_0.
	\end{aligned}
	\right.
\end{equation}
Then, it follows from \eqref{lemma-supersolu-2} and comparison principle that
\begin{equation*}
\underline{V}(t+\omega\delta,x,y)\leq u(t,x,y;\overline{V}(t_0+\omega\delta,x,y))\leq \overline{V}(t+\omega\delta,x,y),
\end{equation*}
for $t\geq t_0$ and $(x,y)\in\mathbb{R}^N$. Combined with \eqref{lemma-supersolu-1} and \eqref{eq:tildeV} and by the uniqueness of the curved front, one has that
\begin{equation*}
|u(t,x,y;\overline{V}(t_0+\omega\delta,x,y))- V(t+\omega\delta,x,y)|\rightarrow 0 \text{ as $t\rightarrow +\infty$ for  }(x,y)\in\mathbb{R}^N.
\end{equation*}
Similarly,
\begin{equation*}
|u(t,x,y;\tilde{V}(t_0-\omega\delta,x,y))- V(t-\omega\delta,x,y)|\rightarrow 0 \text{ as $t\rightarrow +\infty$ for  }(x,y)\in\mathbb{R}^N.
\end{equation*}
By (\ref{stab-proof-5}) and comparison principle, one has that for $t\geq t_0$ and $(x,y)\in\mathbb{R}^N$
\begin{equation*}
u(t,x,y;\tilde{V}(t_0-\omega\delta,x,y))\leq u_{\infty}(t,x,y)\leq u(t,x,y;\overline{V}(t_0+\omega\delta,x,y)).
\end{equation*}
By passing to the limit $t_0\to-\infty$, one has that
\begin{equation*}
V(t-\omega\delta,x,y)\leq u_{\infty}(t,x,y)\leq V(t+\omega\delta,x,y)\text{ for }(t,x,y)\in\mathbb{R}\times\mathbb{R}^N.
\end{equation*}
Since $\delta$ can be taken arbitrary small, we then have $u_{\infty}(t,x,y)\equiv V(t,x,y)$. Thus, for any $\gamma>0$, it follows from (\ref{stab-proof-4}) and taking $\delta$ small enough that, there is $t'>0$ sufficiently large such that
\begin{equation*}
|u(t',x,y)-V(t',x,y)|\leq \gamma, \text{ for }(x,y)\in\mathbb{R}^N.
\end{equation*} 
By Lemma~\ref{stab-lemma3} and the comparison principle, one can show 
$$V(t'+t-\omega \gamma(1-e^{-\lambda t}),x,y)-\gamma e^{-\lambda t}\le u(t'+t,x,y)\le V(t'+t-\omega \gamma(1-e^{-\lambda t}),x,y)+\gamma e^{-\lambda t},$$
for $t\ge 0$ and $(x,y)\in\R^N$. 
By the arbitrariness of $\gamma$, one finally has that
\begin{equation*}
\lim\limits_{t\to+\infty}||u(t,x,y)-V(t,x,y)||_{L^{\infty}(\mathbb{R}^N)}=0.
\end{equation*}

For general $e_0\in\mathbb{S}^{N-1}$, one can modify the sub- and supersolutions in the same manner as in the proofs of Theorems~\ref{thm-existence} and \ref{coro}.

This completes the proof.
\end{proof}

\vskip 0.3cm

The idea for proving the stability of $W(t,x,y)$ is basically the same as for $V(t,x,y)$. Assume that conditions of Theorem \ref{coro} hold. We need to construct supersolutions which can be compared with $U_{e_i}((x,y)\cdot e_i -c_{e_i} t,x,y)$ for each $i\in\{1,\cdots,n\}$.  Define
\begin{equation*}
	\overline{W}_i(t,x,y)=U_{e^i(x)}(\overline{\xi}_i(t,x,y),x,y)-\varepsilon \hat{h}(\alpha x),
\end{equation*}
where
\begin{equation*}
	\overline{\xi}_i(t,x,y)=\frac{y-\hat{c}t-\varphi_i(\alpha x)/\alpha}{\sqrt{1+|\nabla\varphi_i(\alpha x)|^2}}\text{ and }\hat{h}(x)=\sum_{k,l\in\{1,\cdots,n\},k\neq l}e^{-(\hat{q}_{ik}(x)+\hat{q}_{il}(x))}.
\end{equation*}
As Lemma \ref{stab-subsolution},  we can show that $\overline{W}_i(t,x,y)$ is a supersolution of (\ref{initial-equation}) for small enough $\lambda_i$, $\varepsilon$ and $\alpha$.

\begin{lemma}
	There exists $\varepsilon_0>0$ and $\alpha(\varepsilon_0)>0$ such that for $0<\varepsilon<\varepsilon_0$ and $0<\alpha<\alpha(\varepsilon_0)$, the function $\overline{W}_i(t,x,y)$ is a supersolution of (\ref{initial-equation}) satisfying
	\begin{equation*}
		\begin{split}
		|\overline{W}_i(t,x,y)-U_{e_i}(x\cdot\nu_i\cos\theta_i+y\sin\theta_i -c_{e_i}t,x,y)|\leq2\varepsilon,
		\end{split}
	\end{equation*}
	 uniformly for $(t,x,y)\in\R\times\left(\widetilde{Q}_{ii}+\hat{c}te_0\right)$ as $d((t,x,y),\mathcal{H}+\hat{c}te_0)\to+\infty$
	and
	\begin{equation*}
		\overline{W}_i(t,x,y)\ge U_{e_i}(x\cdot\nu_i\cos\theta_i+y\sin\theta_i -c_{e_i}t,x,y),\text{ for }(t,x,y)\in\mathbb{R}\times\mathbb{R}^N.
	\end{equation*}
\end{lemma}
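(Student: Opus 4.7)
The plan is to mirror the proof of Lemma \ref{stab-subsolution} with the sign conventions adjusted so as to produce a supersolution in place of a subsolution. The first step is to verify the analog of Lemma \ref{lemma4.1} under the reversed hypotheses of Theorem \ref{coro}: there exists $C>0$ such that
\begin{equation*}
-\overline{\xi}_{i,t}-c_{e^i(x)}\ge C\hat h(\alpha x),\quad x\in\R^{N-1}.
\end{equation*}
Using $e^i(x)\cdot e_0=1/\sqrt{1+|\nabla\varphi_i(\alpha x)|^2}$, this bound reduces to showing $g(e_i)-g(e^i(x))\ge C\hat h(\alpha x)\sqrt{1+|\nabla\varphi_i|^2}$. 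For $x\in\widehat{Q}_{ij}$ with $\mathrm{dist}(x,\widehat{\mathcal{H}})\to+\infty$, I would expand $e^i(x)=\sum_j\tau_{ij}(x)e_{ij}$ as in \eqref{eq:e(x)}, use $e_{ij}-e_i\approx-\lambda_i\bigl(e_j-(e_i\cdot e_j)e_i\bigr)$ from a direct normalization calculation, apply Taylor expansion, and invoke $\nabla g(e_i)\cdot e_i=0$ together with condition (iv) of Theorem \ref{coro} ($\nabla g(e_i)\cdot e_j>0$) to extract a strictly positive leading term of order $\lambda_i\hat h(\alpha x)$ for $\lambda_i$ small. For $x$ at bounded distance from $\widehat{\mathcal{H}}$, condition (iii) of Theorem \ref{coro} and continuity of $g$ furnish a uniform lower bound of the same type, exactly as in case~(2) of Lemma \ref{lemma3.1}.

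Once this speed estimate is in hand, the supersolution clause of Lemma \ref{supersolution} applies (with $\varphi$, $h$, $e(x)$ replaced by $\varphi_i$, $\hat h$, $e^i(x)$, the small margin added on the correct side of $U_{e^i(x)}$ so that $N\overline{W}_i\ge 0$) to yield the supersolution property of $\overline{W}_i$ for $\varepsilon\in(0,\varepsilon_0]$ and $\alpha\in(0,\alpha(\varepsilon_0)]$. The uniform bound $|\overline{W}_i-U_{e_i}(\cdot)|\le 2\varepsilon$ on $\R\times(\widetilde{Q}_{ii}+\hat ct e_0)$ as $d((t,x,y),\mathcal{H}+\hat ct e_0)\to+\infty$ then follows Step~1 of the proof of Lemma \ref{supV}: one has $\hat q_{ii}(\alpha x)\to 0$ and $\hat q_{ij}(\alpha x)\to+\infty$ for $j\ne i$, whence $\hat h(\alpha x)\to 0$, $e^i(x)\to e_i$, and $\overline{\xi}_i\to x\cdot\nu_i\cos\theta_i+y\sin\theta_i-c_{e_i}t$ using the identity $\hat c\sin\theta_i=c_{e_i}$, so the bounds $\|U_e'\|<\infty$ from Lemma \ref{pulsating-lemma-3} and the decay from Lemma \ref{pulsating-lemma-1} close the estimate. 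For the pointwise inequality $\overline{W}_i\ge U_{e_i}(\cdot)$ on $\R\times\R^N$, I would adapt the sliding argument from Step~2 of the proof of Lemma \ref{supV}: the planar front $u_i(t,x,y):=U_{e_i}(x\cdot\nu_i\cos\theta_i+y\sin\theta_i-c_{e_i}t,x,y)$ is an invasion of $0$ by $1$ with hyperplane interface, while $\overline{W}_i$ is a super-invasion whose interface is asymptotic to $\partial\mathcal{Q}_i+\hat ct e_0$. From $\mathcal{Q}_i\subset\{x\cdot\nu_i\cos\theta_i+y\sin\theta_i\ge 0\}$ one obtains $\widetilde{\Omega}_t^-\subset\Omega_t^-$, so Lemma \ref{pulsating-lemma-5} delivers a minimal $T_{\min}\in[0,+\infty)$ with $\overline{W}_i(t+T_{\min},\cdot)\ge u_i(t,\cdot)$; the standard dichotomy (strictly positive infimum off the two interface layers, versus a contact sequence forced to stay at bounded distance from $\mathcal{H}+\hat ct e_0$) combined with the bistable margin $\kappa$ and linear parabolic estimates then force $T_{\min}=0$.

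The main obstacle is the first step. Unlike Lemma \ref{lemma4.1}, where $\nabla g(e_i)\cdot e_j<0$ makes the leading-order contribution manifestly of the right sign, here the correct sign emerges only after carefully balancing the normalization correction in $e_{ij}=((1-\lambda_i)e_i-\lambda_ie_j)/|(1-\lambda_i)e_i-\lambda_ie_j|$ against the direct $\nabla g(e_i)\cdot e_j$ term through the identity $\nabla g(e_i)\cdot e_i=0$. Once the strict positivity of $g(e_i)-g(e^i(x))$ of order $\lambda_i\hat h(\alpha x)$ is secured, the remaining steps are straightforward transcriptions of earlier arguments in Sections~2--4.
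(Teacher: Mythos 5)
Your proposal is correct and follows the paper's intended route: the paper only says ``As Lemma~\ref{stab-subsolution}, we can show\dots'', and what you spell out — a sign-reversed analogue of Lemma~\ref{lemma4.1} reducing the speed condition to $g(e_i)-g(e^i(x))\ge C\hat h(\alpha x)$ via $\nabla g(e_i)\cdot e_i=0$, the Taylor expansion of $e_{ij}$, and $\nabla g(e_i)\cdot e_j>0$, then the supersolution branch of Lemma~\ref{supersolution}, then Steps~1--2 of the proof of Lemma~\ref{supV} transcribed for $\overline{W}_i$ and $u_i$ — is exactly the intended argument. You are also right that the additive term in $\overline{W}_i$ must be $+\varepsilon\hat h(\alpha x)$ for the bistability margin $\kappa$ to produce $N\overline{W}_i\ge 0$; the minus sign in the paper's displayed definition of $\overline{W}_i$ is a typographical slip inherited from the subsolution $\underline V_i$, and your remark about putting the margin ``on the correct side'' catches it.
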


\vskip 0.3cm

\begin{proof}[Proof of Theorem \ref{coro-stability}] 
Since we have the subsolution $\underline{W}(t,x,y)$ and supersolutions $\overline{W}_i(t,x,y)$ at hand, the proof is then the same as for Theorem~\ref{thm-stability}. So we omit it. 
\end{proof}

\section{Appendix}

In this section, we will complete the proofs of Lemma \ref{pulsating-lemma-3} and Lemma \ref{pulsating-lemma-5}.
\vskip 0.3cm

\begin{proof}[Proof of Lemma \ref{pulsating-lemma-3}]
The main idea of this proof comes from \cite{MR3689331,MR3772873,MR4334733}, where the speed $c_e$ and the pulsating front profile $U_e$ for the case $A(x)\equiv I$ are proved to be doubly continuously Fr\'{e}chet differentiable with respect to $e\in\mathbb{S}^{N-1}$, under the normalization:  $\int_{\mathbb{R}^+\times\mathbb{T}^N}U_e^2(\xi,y)dyd\xi=1$ for all $e\in\mathbb{S}^{N-1}$. Our work is to generalize the Laplacian  to the general elliptic operator $\nabla\cdot(A(x)\nabla u)$. The main difference lies on the computation related to $\nabla \cdot(A(x)\nabla  u)$.

We first point out that the continuity of $(U_e,c_e)$ can be obtained directly from \cite[Theorem 1.3]{MR3772873} since the proof of it is essentially independent of the form of diffusion term. In order to get the differentiability of $(U_e,c_e)$, let us introduce some notations.  Define new variables $\xi=x\cdot e-c_et$ and $y=x$, then $\nabla_x U_e(x\cdot e-c_et,x)=\widetilde{\nabla}_eU(\xi,y)$, where $\widetilde{\nabla}_e:=e\cdot\partial_{\xi}+\nabla_y$. Let $L^2(\mathbb{R}\times\mathbb{T}^N)$, $H^1(\mathbb{R}\times\mathbb{T}^N)$ and $H^2(\mathbb{R}\times\mathbb{T}^N)$ be Banach spaces defined by
\begin{equation*}
\left\{
\begin{aligned}
	L^2(\mathbb{R}\times\mathbb{T}^N)&=\{u\in L^2_{loc}(\mathbb{R}\times\mathbb{T}^N); u(\xi,y+kL)=u(\xi,y)\text{ a.e.in }\mathbb{R}\times\mathbb{R}^N\\
	&\text{ for any }k\in\mathbb{Z}^N, L\in\mathbb{T}^N, \text{ and }u\in L^2(\mathbb{R}\times\Omega)\text{ for any bounded set }\Omega\subset\mathbb{R}^N\},\\	
		H^1(\mathbb{R}\times\mathbb{T}^N)&=\{u\in H^1_{loc}(\mathbb{R}\times\mathbb{T}^N); u(\xi,y+kL)=u(\xi,y)\text{ a.e.in }\mathbb{R}\times\mathbb{R}^N\\
	&\text{ for any }k\in\mathbb{Z}^N, L\in\mathbb{T}^N, \text{ and }u\in H^1(\mathbb{R}\times\Omega)\text{ for any bounded set }\Omega\subset\mathbb{R}^N\},\\	
		H^2(\mathbb{R}\times\mathbb{T}^N)&=\{u\in H^2_{loc}(\mathbb{R}\times\mathbb{T}^N); u(\xi,y+kL)=u(\xi,y)\text{ a.e.in }\mathbb{R}\times\mathbb{R}^N\\
	&\text{ for any }k\in\mathbb{Z}^N, L\in\mathbb{T}^N, \text{ and }u\in H^2(\mathbb{R}\times\Omega)\text{ for any bounded set }\Omega\subset\mathbb{R}^N\},
\end{aligned}
\right.
\end{equation*}
endowed with the norms
\begin{equation*}
	\begin{aligned}
		||u||_{L^2(\mathbb{R}\times\mathbb{T}^N)}=&\left(\int_{\mathbb{R}}\int_{\mathbb{T}^N}|u|^2dyd\xi\right)^{\frac{1}{2}},\\
		 ||u||_{H^1(\mathbb{R}\times\mathbb{T}^N)}=&||u||_{L^2(\mathbb{R}\times\mathbb{T}^N)}+||\partial_{\xi} u||_{L^2(\mathbb{R}\times\mathbb{T}^N)}+\sum_{i=1}^{N}||\partial_{ y_i}u||_{L^2(\mathbb{R}\times\mathbb{T}^N)},\\
		||u||_{H^2(\mathbb{R}\times\mathbb{T}^N)}=&||u||_{H^1(\mathbb{R}\times\mathbb{T}^N)}+||\partial_{\xi\xi}u||_{L^2(\mathbb{R}\times\mathbb{T}^N)}+\sum_{i=1}^{N}||\partial_{\xi}\partial_{y_i}u||_{L^2(\mathbb{R}\times\mathbb{T}^N)}\\
		&+\sum_{i,j=1}^{N}||\partial_{y_i}\partial_{y_j}u||_{L^2(\mathbb{R}\times\mathbb{T}^N)}.
	\end{aligned}
\end{equation*}
Define 
$$D:=\{v\in H^1(\mathbb{R}\times\mathbb{T}^N); \widetilde{\nabla}_e\cdot(A(y)\widetilde{\nabla}_ev)\in L^2(\mathbb{R}\times\mathbb{T}^N)\},$$
 endowed with the norm $||v||_{D}=||v||_{H^1(\mathbb{R}\times\mathbb{T}^N)}+||\widetilde{\nabla}_e\cdot(A(y)\widetilde{\nabla}_ev)||_{L^2(\mathbb{R}\times\mathbb{T}^N)}$.
Fix a real $\beta>0$ and for any $e\in\mathbb{S}^{N-1}$, define
\begin{equation*}
	M_{c,e}(v):=\widetilde{\nabla}_e\cdot(A(y)\widetilde{\nabla}_ev)+c\partial_{\xi} v-\beta v,\quad v\in D.
\end{equation*}

Now we sketch the proof of the differentiability of $(U_e,c_e)$ as follows.

\textit{ Step1: $M_{c,e}:D\to L^2(\mathbb{R}\times\mathbb{T}^N)$ is invertible.}

By recalling the periodicity of functions in $L^2(\mathbb{R}\times\mathbb{T}^N)$ and the positive definiteness of $A$, one can integrate $M_{c,e}(v)=0$ against $v$ in $L^2(\mathbb{R}\times\mathbb{T}^N)$ to obtain that $\int_{\mathbb{R}\times\mathbb{T}^N}(\widetilde{\nabla}_evA\widetilde{\nabla}_ev+\beta v^2)=0$, whence $v=0$. Applying the same arguments to the adjoint operator $M_{c,e}^*$ of $M_{c,e}$ gives that $\hbox{ker}(M_{c,e}^*)=\{0\}$. Assume now $M_{c,e}(v_n)=g_n$ with $v_n\in D$ and $g_n\to g$ in $L^2(\mathbb{R}\times\mathbb{T}^N)$. As in \cite[Lemma 3.1]{MR3689331}, testing $M_{c,e}(v_n)=g_n$ with $v_n$ and the symmetric difference quotient in $\xi$-direction, that is, $D_hv_n(\xi,y)=(v_n(\xi+h,y)-v_n(\xi-h,y))/(2h)$ where $h>0$, one can deduce that $v_n$ is bounded in $H^1(\mathbb{R}\times\mathbb{T}^N)$ for any $n\in\mathbb{N}$. Then by the Eberlein-Smulian Theorem, there is a subsequence $\{v_{n_i}\}_{i\in\mathbb{N}}$ such that $v_{n_i}\to v$ in $H^1(\mathbb{R}\times\mathbb{T}^N)$ weakly as $i\to +\infty$, which further means that $v\in D, M_{c,e}(v)=g$. Therefore, $M_{c,e}$ is invertible and there is a constant $C(\beta, c, \lambda_2)>0$ such that
\begin{equation}\label{appendix1-1}
	||M_{c,e}^{-1}(g)||_{H^1(\mathbb{R}\times\mathbb{T}^N)}\leq C||g||_{L^2(\mathbb{R}\times\mathbb{T}^N)}.
\end{equation}

\textit{Step2: for every $\{e_n\}\in\mathbb{S}^{N-1}$, $\{g_n\}\in L^2(\mathbb{R}\times\mathbb{T}^N)$, $\{c_n\}\in (0,+\infty)$ such that $e_n\to e$, $c_n\to c$, $||g_n-g||_{L^2(\mathbb{R}\times\mathbb{T}^N)}\to 0$ as $n\to +\infty$, there holds $M_{c_n,e_n}^{-1}(g_n)\to M_{c,e}^{-1}(g)$ in $H^1(\mathbb{R}\times\mathbb{T}^N)$ as $n\to +\infty.$} 

For any $\omega\in L^2(\mathbb{R}\times\mathbb{T}^N)$, let $p_n=M_{c_n,e_n}^{-1}(\omega), q_n=M_{c,e_n}^{-1}(\omega)$, then $M_{c,e_n}(p_n-q_n)=(c-c_n)\partial_{\xi}p_n$. Testing it with $(p_n-q_n)$ in $H^1(\mathbb{R}\times\mathbb{T}^N)$ and the same arguments in Step~1 gives that there is $C(\beta, c_n, c, \lambda_2)>0$ such that $||p_n-q_n||_{H^1(\mathbb{R}\times\mathbb{T}^N)}\leq C||\partial_{\xi}p_n||_{L^2(\mathbb{R}\times\mathbb{T}^N)}$. Now we consider $M_{c_n,e_n}(p_n)=\omega$, as in Step~1, one can test it with the symmetric difference quotient in $\xi$-direction $D_hp_n(\xi,y)=(p_n(\xi+h,y)-p_n(\xi-h,y))/(2h)$ to get that $||\partial_{\xi}p_n||_{L^2(\mathbb{R}\times\mathbb{T}^N)}\leq (1/c_n)||\omega||_{L^2(\mathbb{R}\times\mathbb{T}^N)}$. It follows that 
\begin{equation}\label{appendix1-2}
	||M_{c_n,e_n}^{-1}(\omega)-M_{c,e_n}^{-1}(\omega)||_{H^1(\mathbb{R}\times\mathbb{T}^N)}\leq C||\omega||_{L^2(\mathbb{R}\times\mathbb{T}^N)}.
\end{equation}
On the other hand, for any given $g\in L^2(\mathbb{R}\times\mathbb{T}^N)$, let $\omega_n=M_{c,e_n}^{-1}(g)$ and $\omega=M_{c,e}^{-1}(g)$. Then one gets from similar estimates in Step~1 that $\omega_n$ converge to $\omega$ strongly in $L^2_{loc}(\mathbb{R}\times\mathbb{T}^N)$ and weakly in $H^1(\mathbb{R}\times\mathbb{T}^N)$ as $n\to +\infty$. Note that $M_{c,e_n}(\omega_n-\omega)=-\widetilde{\nabla}_{e_n}\cdot(A\widetilde{\nabla}_{e_n}w)+\widetilde{\nabla}_{e}\cdot(A\widetilde{\nabla}_{e}w)$. Testing it with $\omega_n-\omega$ gives that
\begin{equation*}
	\int_{\mathbb{R}\times\mathbb{T}^N}\widetilde{\nabla}_{e_n}(\omega_n-\omega)A\widetilde{\nabla}_{e_n}(\omega_n-\omega)+\beta(\omega_n-\omega)^2=\int_{\mathbb{R}\times\mathbb{T}^N}(\omega_n-\omega)\widetilde{\nabla}_{e}\cdot(A\widetilde{\nabla}_{e}w)+\widetilde{\nabla}_{e_n}(\omega_n-\omega)A\widetilde{\nabla}_{e_n}\omega,
\end{equation*}
which converges to $0$ since $\omega\in H^1(\mathbb{R}\times\mathbb{T}^N)$, $\widetilde{\nabla}_{e}\cdot(A\widetilde{\nabla}_{e}w)\in L^2(\mathbb{R}\times\mathbb{T}^N)$ and  $\widetilde{\nabla}_{e_n}\omega_n\rightharpoonup\widetilde{\nabla}_{e_n}\omega$ weakly in $L^2(\mathbb{R}\times\mathbb{T}^N)$. By recalling our assumption (A4), one deduces that $||\omega_n-\omega||_{L^2(\mathbb{R}\times\mathbb{T}^N)}\to 0$ and $||\widetilde{\nabla}_{e_n}(\omega_n-\omega)||_{L^2(\mathbb{R}\times\mathbb{T}^N)}\to 0$ as $n\to +\infty$. Moreover, it follows from similar arguments in Step~1 that $||\partial_{\xi}(\omega_n-\omega)||_{L^2(\mathbb{R}\times\mathbb{T}^N)}\to 0$ and $||\nabla_y(\omega_n-\omega)||_{L^2(\mathbb{R}\times\mathbb{T}^N)}\to 0$ as $n\to +\infty$. Thus,
\begin{equation}\label{appendix1-3}
	||M_{c,e_n}^{-1}(g)-M_{c,e}^{-1}(g)||_{H^1(\mathbb{R}\times\mathbb{T}^N)}\to 0\text{ as }n\to+\infty.
\end{equation}
Eventually, the conclusion $M_{c_n,e_n}^{-1}(g_n)\to M_{c,e}^{-1}(g)$ in $H^1(\mathbb{R}\times\mathbb{T}^N)$ as $n\to +\infty$ holds according to (\ref{appendix1-1}), (\ref{appendix1-2}) and (\ref{appendix1-3}).

We now define some auxiliary operators similar to those in \cite{MR3772873}. For any $e\in\mathbb{S}^{N-1}, v\in H^2(\mathbb{R}\times\mathbb{T}^N), \vartheta\in\mathbb{R}$ and $\eta\in\mathbb{R}^N$, define
\begin{equation*}
	\begin{split}
	K_e(v,\vartheta,\eta):=&\eta A (\eta+2e)\partial_{\xi\xi}(U_e+v)+\eta A\nabla_y\partial_{\xi}(U_e+v)+\nabla_y\cdot(A\eta\partial_{\xi}(U_e+v))+\vartheta\partial_{\xi}(U_e+v)\\
	&+f(y,U_e+v)-f(y,U_e)+\beta v,
	\end{split}
\end{equation*}
and
\begin{equation*}
	G_e(v,\vartheta,\eta):=\left(v+M_{c,e}^{-1}(K_e(v,\vartheta,\eta)),\int_{\mathbb{R}^+\times\mathbb{T}^N}((U_e+v)^2-U_e^2)dyd\xi\right),
\end{equation*}
where $U_e, v$ denote $U_e(\xi,y), v(\xi,y)$ for short. By Step~1, we know that the function $G_e:H^2(\mathbb{R}\times\mathbb{T}^N)\times\mathbb{R}\times\mathbb{R}^N\to D\times\mathbb{R}$ and $G_e(0,0,0)=(0,0)$.

\textit{Step3: for every $c>0$ and $e\in\mathbb{S}^{N-1}$, the function $G_e$ is continuous and it is coutinuously Fr\'{e}chet differentiable with respect to $(v,\vartheta)$ and doubly continuously Fr\'{e}chet differentiable with respect to $\eta$.}

Since $K_e$ is affine with respect to $\vartheta$ and quadratic with respect to $\eta$, and since $f(y,u)$ is globally Lipschitz continuous in $u$, one gets the continuity of $K_e$. Then by the invertibility of $M_{c,e}$ and the Cauchy-Schwarz inequality, it follows that $G_e$ is continuous in $H^2(\mathbb{R}\times\mathbb{T}^N)\times\mathbb{R}\times\mathbb{R}^N$.

Since $G_e$ is quadratic with respect to $\eta$, it is elementary to get that $G_e$ is doubly continuously Fr\'{e}chet differentiable with respect to $\eta$. By the definition of the Fr\'{e}chet derivative, one can compute that for any $(v,\vartheta,\eta)\in H^2(\mathbb{R}\times\mathbb{T}^N)\times\mathbb{R}\times\mathbb{R}^N$ and $\tilde{\eta}\in\mathbb{R}^N$,
\begin{equation}\label{appendix1-4}
	\begin{split}
		G_e(v,\vartheta,\eta+\tilde{\eta})-G_e(v,\vartheta,\eta)=&\partial_{\eta}G_e(v,\vartheta,\eta)\tilde{\eta}+o(|\tilde{\eta}|)\\
		=&\Big(M_{c,e}^{-1}(2\partial_{\xi\xi}(U_e+v)(\eta+e)A\tilde{\eta}+\nabla_y\partial_{\xi}(U_e+v)A\tilde{\eta}\\
		&+\nabla_y\cdot(A\partial_{\xi}(U_e+v)\tilde{\eta})),0\Big)
	\end{split}
\end{equation}
Notice that $\lim\limits_{h\to 0}\left(f(y,U_e+u+hv)-f(y,U_e+u)\right)/h=f_u(y,U_e+u)v$ and $f_u(y,u)$ is globally Lipschitz continuous in $u$, one has that $G_e(v,\vartheta,\eta)$ is coutinuously Fr\'{e}chet differentiable with respect to $(v,\vartheta)$ and for any $(v,\vartheta,\eta)\in H^2(\mathbb{R}\times\mathbb{T}^N)\times\mathbb{R}\times\mathbb{R}^N$ and $(\tilde{v},\tilde{\vartheta})\in H^2(\mathbb{R}\times\mathbb{T}^N)\times\mathbb{R}^N$,
\begin{equation}\label{appendix1-5}
	\begin{split}
		G_e(v+\tilde{v},&\vartheta+\tilde{\vartheta},\eta)-G_e(v,\vartheta,\eta)=\partial_{(v,\vartheta)}G_e(v,\vartheta,\eta)(\tilde{v},\tilde{\vartheta})+o(||(\tilde{v},\tilde{\vartheta})||_{D\times\mathbb{R}})\\
		=&\Big(
\tilde{v}+M_{c,e}^{-1}(\eta A (\eta+2e)\partial_{\xi\xi}\tilde{v}+\eta A\nabla_y\partial_{\xi}\tilde{v}+\nabla_y\cdot(A\eta\partial_{\xi}\tilde{v})+\vartheta\partial_{\xi}\tilde{v}+\tilde{\vartheta}\partial_{\xi}(U_e+v)\\
&+f_u(y,U_e+v)\tilde{v}+\beta\tilde{v}),
2\int_{\mathbb{R}^+\times\mathbb{T}^N}(U_e+v)\tilde{v}dyd\xi
		\Big),
	\end{split}
\end{equation}
where $||(\tilde{v},\tilde{\vartheta})||_{D\times\mathbb{R}}=||\tilde{v}||_{D}+|\tilde{\vartheta}|$.

\textit{Step4: the Fr\'{e}chet differentiability of $(U_e,c_e)$.} 

For any $e\in\mathbb{S}^{N-1}$ and $(\tilde{v},\tilde{\vartheta})\in D\times\mathbb{R}$, define
\begin{equation*}
	Q_e(\tilde{v},\tilde{\vartheta}):=\left(\tilde{v}+M_{c,e}^{-1}(\tilde{\vartheta}\partial_{\xi}U_e+f_u(y,U_e)\tilde{v}+\beta\tilde{v}), 2\int_{\mathbb{R}^+\times\mathbb{T}^N}U_e\tilde{v}dyd\xi\right),
\end{equation*}
then $Q_e(\tilde{v},\tilde{\vartheta})=\partial_{(v,\vartheta)}G_e(v,\vartheta,\eta)(\tilde{v},\tilde{\vartheta})$ from (\ref{appendix1-5}). Here we emphasize that althought \eqref{1.1} is different from the equation studied in \cite{MR3772873}, the form of the operator $Q_e$ defined here is completely identical to that one defined in \cite[Lemma 2.10]{MR3772873}. Thus, it follows from \cite[Lemma 2.11]{MR3772873} that $Q_e:D\times\mathbb{R}\to D\times\mathbb{R}$ is invertible and there is $C>0$ such that
\begin{equation}\label{appendix1-6}
||Q_e^{-1}(g,d)||_{L^2(\mathbb{R}\times\mathbb{T}^N)}\leq C||(g,d)||_{D\times\mathbb{R}}
\end{equation}
for all $e\in\mathbb{S}^{N-1}, g\in L^2(\mathbb{R}\times\mathbb{T}^N)$ and $d\in\mathbb{R}$. Now we are ready to finish our proof by following the proof of \cite[Theorem 1.5]{MR3772873} step by step.

For any $b\in\mathbb{R}^N\setminus\{0\}$, let $U_b=U_{\frac{b}{|b|}}$ and $c_b=c_{\frac{b}{|b|}}$. Then $(U_b,c_b)$ is continuous with respect to $b$ and it satisfies
\begin{equation}\label{appendix1-7}
	c_b\partial_{\xi}U_b+\frac{b}{|b|}A\frac{b}{|b|}\partial_{\xi\xi}U_b+ \frac{b}{|b|}A\nabla_{x,y}\partial_{\xi}U_b+\nabla_{x,y}\cdot(A\frac{b}{|b|}\partial_{\xi}U_b)+\nabla_{x,y}\cdot(A\nabla_{x,y}U_b)+f(x,y,U_b)=0.
\end{equation}
Fix arbitrary $e\in\mathbb{S}^{N-1}$. For any $h\in\mathbb{R}^N$ such that $e+h\in\mathbb{R}^N\setminus\{0\}$, define $\tilde{U}_h:=U_{e+h}-U_e\in D$, $\tilde{c}_h:=c_{e+h}-c_e\in\mathbb{R}$ and $\tilde{h}:=(e+h)/|e+h|-e.$ By (\ref{property-1}), (\ref{appendix1-7}) and the definition of $G_e$, one can compute that $G_e(\tilde{U}_h,\tilde{c}_h,\tilde{h})=(0,0)$. Then it follows from $G_e(0,0,0)=(0,0)$ that
\begin{equation*}
	\begin{split}
		(0,0)&=G_e(\tilde{U}_h,\tilde{c}_h,\tilde{h})-G_e(0,0,0)\\
		&=\partial_{(v,\vartheta)}G_e(0,0,0)(\tilde{U}_h,\tilde{c}_h)+\partial_{\eta}G_e(0,0,0)\tilde{h}+\omega_1(\tilde{h})+\omega_2(\tilde{U}_h,\tilde{c}_h),
	\end{split}
\end{equation*}
where $\omega_1(\tilde{h})=o(|h|)$ and $\omega_2(\tilde{U}_h,\tilde{c}_h)=o(||(\tilde{U}_h,\tilde{c}_h)||_{L^2(\mathbb{R}\times\mathbb{T}^N)\times\mathbb{R}})$ as $|h|\to 0$. From (\ref{appendix1-1}) and  (\ref{appendix1-6}), one can prove that $Q_e^{-1}(\omega_1(\tilde{h}))=o(|h|)$ and $Q_e^{-1}(\omega_2(\tilde{U}_h,\tilde{c}_h))=o(|h|)$ as $|h|\to 0$. Therefore, by (\ref{appendix1-4}) and noticing that $\tilde{h}=-(e\cdot h)e+h+o(|h|)$ as $|h|\to 0$, one gets that
\begin{equation*}
	\begin{split}
		(U_{e+h}-U_e,c_{e+h}-c_e)&=(U_e',c_e')\cdot h+o(|h|)\\
		&=(e\cdot h)Q_e^{-1}(M_{c,e}^{-1}(2\partial_{\xi\xi}U_e eAe+\nabla_y\partial_{\xi}U_eAe+\nabla_y\cdot(A\partial_{\xi}U_ee)),0)\\
		&\quad -Q_e^{-1}(M_{c,e}^{-1}(2\partial_{\xi\xi}U_e eAh+\nabla_y\partial_{\xi}U_eAh+\nabla_y\cdot(A\partial_{\xi}U_eh)),0)+o(|h|),
	\end{split}
\end{equation*}
where $(U_e',c_e'): \mathbb{R}^N\to L^2(\mathbb{R}\times\mathbb{T}^N)\times\mathbb{R}$ denotes the Fr\'{e}chet derivative. By (\ref{appendix1-1}), (\ref{appendix1-6}) and the continuity of $(U_e,c_e)$ with respect to $e\in\mathbb{S}^{N-1}$ again, one can prove that $(U_{e_n}',c_{e_n}')\cdot h\to (U_e',c_e')\cdot h$ as $n\to +\infty$ when $e_n\to e$ as $n\to +\infty$. In other words, $(U_e,c_e)$ is first-order continuously Fr\'{e}chet differentiable with respect to $e\in\mathbb{S}^{N-1}$. Similar arguments can be applied to prove the second-order differentiability. We omit the details by referring to \cite{MR3772873}.

Taking derivatives with respect to $\xi$ and $y_i$ ($i=1,\dots,N$) on both sides of (\ref{property-1}) and considering those new equations of $\partial_{\xi}U_e, \partial_{ y_i}U_e$, one has that $\partial_{\xi}U_e$, $\partial_{ y_i}U_e$ ($i=1,\dots,N$) are also continuous with respect to $e\in\mathbb{S}^{N-1}$. On the other hand, one knows from Definition~\ref{pulsating-def} that $\lim\limits_{\xi\to\pm\infty}U_e(\xi,y)=0$, $1$, uniformly for $e\in\mathbb{S}^{N-1}$ and $y\in\mathbb{R}^N$, which further means that $\lim\limits_{\xi\to\pm\infty}U_e'\cdot h=0$ for any $h\in\mathbb{R}^N$, uniformly for $e\in\mathbb{S}^{N-1}$, $y\in\mathbb{R}^N$. Thus $||U_e'||:=\sup\limits_{|h|=1}||U_e\cdot h||_{L^2(\mathbb{R}\times\mathbb{T}^N)}$ is bounded uniformly for $e\in\mathbb{S}^{N-1}$. Similarly, one can get that $||\partial_{\xi}U_e'||, ||\nabla_yU_e'||, ||U_e''||, ||c_e'||, ||c_e''||$ are all bounded uniformly for $e\in\mathbb{S}^{N-1}$.

Finally, it follows from the same arguments in the proof of \cite[Lemma 2.7]{MR4334733} combined with the computation for the elliptic operator $\nabla\cdot(A(x)\nabla u)$  that (\ref{pulsating-lemma-4}) holds.
This completes the proof.
\end{proof}
\vskip 0.3cm

In order to prove Lemma \ref{pulsating-lemma-5}, we first show that the interfaces $(\Gamma_t)_{t\in\mathbb{R}}$ of sub-invasion of \eqref{1.1} cannot move infinitely fast. 

\begin{lemma}
	If $u$ is a sub-invasion of $0$ by $1$  with sets $(\Omega_t^{\pm})_{t\in\mathbb{R}}$ and $(\Gamma_t)_{t\in\mathbb{R}}$, then
	\begin{equation}\label{transition5}
		\sup\{d(x,\Gamma_{t-\tau}); t\in\mathbb{R}, x\in\Gamma_t\}<+\infty,
	\end{equation}
	for any $\tau>0$.
\end{lemma}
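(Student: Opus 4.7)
The approach is to argue by contradiction, combining the monotonicity $\Omega_s^+\subset\Omega_t^+$ ($s\le t$) from the sub-invasion hypothesis with a parabolic comparison argument that propagates the smallness of $u$ forward in time. Suppose there exist $\tau>0$ and sequences $(t_n)\subset\mathbb{R}$ and $x_n\in\Gamma_{t_n}$ with $R_n:=d(x_n,\Gamma_{t_n-\tau})\to+\infty$. First, the sub-invasion property yields $\Omega_{t_n-\tau}^+\subset\Omega_{t_n}^+$; since $\Omega_{t_n}^+$ is open and disjoint from $\Gamma_{t_n}$, the point $x_n$ cannot lie in $\Omega_{t_n-\tau}^+$. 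Hence $x_n\in\Omega_{t_n-\tau}^-$ for all large $n$ and the ball $B(x_n,R_n)$ is entirely contained in $\Omega_{t_n-\tau}^-$.

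Next, fix an arbitrary $\varepsilon_0\in(0,\sigma)$ and let $M=M(\varepsilon_0)>0$ be the constant provided by \eqref{transition4}. For $n$ with $R_n>M$, every $y\in B(x_n,R_n-M)$ belongs to $\Omega_{t_n-\tau}^-$ with $d(y,\Gamma_{t_n-\tau})\ge M$, so \eqref{transition4} gives $u(t_n-\tau,y)\le\varepsilon_0$ throughout this ball.

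This smallness is then propagated up to time $t_n$. Setting $L:=\sup_{(x,u)\in\mathbb{R}^N\times(0,1]}f(x,u)/u$, which is finite because $f$ is $C^2$ in $u$ and $f(x,0)\equiv 0$, the subsolution $u$ satisfies $u_t-\nabla\cdot(A\nabla u)\le Lu$. Parabolic comparison with the corresponding linear equation gives, for $t\ge t_n-\tau$ and $z\in\mathbb{R}^N$,
$$u(t,z)\le e^{L(t-t_n+\tau)}\int_{\mathbb{R}^N}G(t-t_n+\tau,z,y)\,u(t_n-\tau,y)\,dy,$$
where $G$ is the fundamental solution of $\partial_s v-\nabla\cdot(A(y)\nabla v)=0$; by the classical Aronson estimate, valid because $A$ is smooth, bounded and uniformly elliptic by (A4), $G$ enjoys a Gaussian upper bound $G(s,z,y)\le C s^{-N/2}e^{-c|z-y|^2/s}$. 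Splitting the spatial integral at $B(x_n,R_n-M)$ and using $0\le u\le 1$ outside yields, for each fixed $r>0$, every $z\in B(x_n,r)$ and every $n$ large enough,
$$u(t_n,z)\le e^{L\tau}\Big(\varepsilon_0+\int_{|y-x_n|>R_n-M}G(\tau,z,y)\,dy\Big)\le 2e^{L\tau}\varepsilon_0,$$
since the Gaussian remainder vanishes uniformly in $z\in B(x_n,r)$ as $R_n\to+\infty$.

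To close the argument, \eqref{transition2} furnishes $r=r(M)>0$ such that for every $t\in\mathbb{R}$ and every $x\in\Gamma_t$ the ball $B(x,r)$ contains a point of $\Omega_t^+$ at distance at least $M$ from $\Gamma_t$. Applied at $(t_n,x_n)$, this produces $y_n\in\Omega_{t_n}^+\cap B(x_n,r)$ with $d(y_n,\Gamma_{t_n})\ge M$, whence $u(t_n,y_n)\ge 1-\varepsilon_0$ by \eqref{transition4}. Combined with the previous bound evaluated at $z=y_n$, this forces $1-\varepsilon_0\le 2e^{L\tau}\varepsilon_0$, which fails once $\varepsilon_0$ is chosen small enough, giving the desired contradiction. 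The only nontrivial analytic input beyond the transition-front axioms \eqref{transition1}--\eqref{transition4} and the sub-invasion monotonicity is the Gaussian heat-kernel bound for $\partial_t-\nabla\cdot(A(x)\nabla\cdot)$; we expect this to be the main technical point, though it is classical under (A4).
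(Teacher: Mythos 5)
Your proof is correct, but it takes a genuinely different route from the paper. The paper reduces to $\tau\geq T_{\varepsilon}$ and invokes the spreading-speed estimate of \cite[Lemmas 3.1--3.2]{MR3772873}: comparing $u$ with the solution $v_R$ of the Cauchy problem with datum $\varepsilon$ on $B(0,R)$ and $1$ outside, that lemma gives $v_R(\tau,0)\le\sigma$ once $R\ge(\overline{c}+\varepsilon)\tau$ with $\overline{c}=\sup_{e}c_e$, so $u(t_0,y_0)\le\sigma$, contradicting $u(t_0,y_0)\ge1-\varepsilon>\sigma$. That argument leans on the bistable structure and on knowing the pulsating-front speeds are uniformly bounded. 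You instead linearise: since $u$ is a subsolution and $f(x,u)\le Lu$ for a Lipschitz-type constant $L$, you get $u(t_n,\cdot)\le e^{L\tau}\int G(\tau,\cdot,y)\,u(t_n-\tau,y)\,dy$, and the Aronson Gaussian upper bound makes the contribution from outside $B(x_n,R_n-M)$ vanish as $R_n\to\infty$. This is more elementary and self-contained (only (A4) and the Lipschitz bound on $f$ are used, no reference to pulsating front speeds), and it also handles all $\tau>0$ directly without the reduction to $\tau\geq T_{\varepsilon}$. The only cost is the $\tau$-dependent constant $2e^{L\tau}$, which is harmless since $\tau$ is fixed in the statement; one simply takes $\varepsilon_0<\min\{\sigma,(1+2e^{L\tau})^{-1}\}$ at the outset. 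Both proofs are otherwise structurally parallel: each picks a point $x_n\in\Gamma_{t_n}$ with $d(x_n,\Gamma_{t_n-\tau})\to\infty$, uses sub-invasion monotonicity to place a large ball in $\Omega_{t_n-\tau}^-$, uses \eqref{transition4} there, and uses \eqref{transition2} to find a nearby point where $u(t_n,\cdot)\ge 1-\varepsilon_0$.
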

\begin{proof}
Take $\epsilon\in (0,\sigma)$ where $\sigma$ is given in (A3), let $M$ be given in (\ref{transition4}) and let $T_{\varepsilon}>0, R_{\varepsilon}>0$ be constants such that \cite[Lemma 3.1, Lemma 3.2]{MR3772873} hold. Since $\Gamma_t\subset \Omega_{t-\tau}^{-}$ for any $\tau>0$, we only have to show that the conclusion holds for $\tau\ge T_{\varepsilon}$. Take $R\ge \max\{R_{\varepsilon},(\overline{c}+\varepsilon)\tau\}$ where $\overline{c}=\sup_{e\in\mathbb{S}^{N-1}}c_e$.

Now assume by contradiction that
\begin{equation*}
	\sup\{d(x,\Gamma_{t-\tau}); t\in\mathbb{R}, x\in\Gamma_t\}=+\infty.
\end{equation*}
Then, there exist $t_0\in\mathbb{R}$ and $x_0\in\Gamma_{t_0}\subset \Omega_{t_0-\tau_0}^-$ such that 
\begin{equation}\label{5.1-1}
d(x_0,\Gamma_{t_0-\tau_0})\geq r_{M}+M+R,
\end{equation}
where $r_{M}>0$ such that
\begin{equation*}
	\sup\{d(y,\Gamma_{t_0}); y\in\Omega_{t_0}^{+}\cap B(x_0,r_{M})\}\geq M.
\end{equation*}
This is achievable since (\ref{transition2}). Then there is $y_0\in\mathbb{R}^N$ such that
\begin{equation}\label{5.1-2}
	y_0\in\Omega_{t_0}^+,\, |y_0-x_0|\le r_{M}\text{ and } d(y,\Gamma_{t_0})\geq M.
\end{equation}
By (\ref{transition4}), we know that
\begin{equation}\label{5.1-3}
	u(t_0,y_0)\geq 1-\varepsilon>1-\sigma>\sigma.
\end{equation}
On the other hand, one also has $B(y_0,R)\subset\Omega_{t_0-\tau}^-$ and $d(B(y_0,R),\Gamma_{t_0-\tau})\geq M$, which further implies
\begin{equation}\label{5.1-4}
	u(t_0-\tau,y)\leq\varepsilon\text{ for all }y\in B(y_0,R).
\end{equation}
For such $R$ and $\varepsilon$ given above, let $v_{R}(t,x)$ denote the solution of the Cauchy problem
\begin{equation*}
	\left\{
	\begin{aligned}
		&(v_R)_t-\nabla\cdot(A(x)\nabla v_R)=f(x,v_R), \quad t>0,x\in\mathbb{R}^N,\\
		&v_R(0,x)=\varepsilon\text{ for }|x|<R,\quad  v_R(0,x)=1\text{ for }|x|\geq R.
	\end{aligned}
	\right.
\end{equation*}
By \cite[Lemma 3.2]{MR3772873}, we know that 
$$v_R(t,x)\le \sigma, \hbox{ for all $T_{\varepsilon}\le t\le \frac{R}{\overline{c}+\varepsilon}$ and $|x|\le R-(\overline{c}+\varepsilon)t$}.$$
Since $u(t,x)$ is a sub-invasion and $R\ge (\overline{c}+\varepsilon)\tau$, it follows from (\ref{5.1-4}) and the comparison principle that 
\begin{equation*}
	u(t_0,y)\leq v_R(\tau,y-y_0)\text{ for all }y\in\mathbb{R}^N.
\end{equation*}
 Thus, $u(t_0,y_0)\leq v_R(\tau,0)\leq\sigma$ which contradicts (\ref{5.1-3}).
\end{proof}

\vskip 0.3cm

\begin{proof}[Proof of Lemma \ref{pulsating-lemma-5}]
This proof is inspired by \cite{MR2898886}, where the authors used the sliding method to show many properties of transition fronts. We only sketch the main steps as follows.

\textit{Step~1: Dividing $\mathit{\mathbb{R}^N}$ into several parts.} Define $\widetilde{u}_s(t,x)=\widetilde{u}(t+s,x), (t,x)\in\mathbb{R}\times\mathbb{R}^N$. It follows from (\ref{transition4}) that, for $0<\sigma<1/2$ given in (A3), there exist $M$ and $\widetilde{M}$ such that
\begin{equation}\label{appendix2-1}
	\left\{
	\begin{aligned}
		&\forall t\in\mathbb{R}, \forall x\in\Omega_{t}^{+}, (d(x,\Gamma_t)\geq M)\Rightarrow(u(t,x)\geq 1-\sigma/2),\\
		&\forall t\in\mathbb{R}, \forall x\in\Omega_{t}^{-}, (d(x,\Gamma_t)\geq M)\Rightarrow(u(t,x)\leq \sigma),\\		
	\end{aligned}
	\right.
\end{equation}
and
\begin{equation}\label{appendix2-2}
	\left\{
	\begin{aligned}
		&\forall t\in\mathbb{R}, \forall x\in\widetilde{\Omega}_{t}^{+}, (d(x,\widetilde{\Gamma}_t)\geq \widetilde{M})\Rightarrow(\widetilde{u}(t,x)\geq 1-\sigma/2),\\
		&\forall t\in\mathbb{R}, \forall x\in\widetilde{\Omega}_{t}^{-}, (d(x,\widetilde{\Gamma}_t)\geq \widetilde{M})\Rightarrow(\widetilde{u}(t,x)\leq \sigma).\\		
	\end{aligned}
	\right.
\end{equation}
Since $u$ and $\widetilde{u}$ are sub-invasion and sup-invasion of $0$ by $1$, and $\widetilde{\Omega}_{t}^{-}\subset \Omega_{t}^{-}$, there is $s_0>0$ large enough such that
\begin{equation*}
	\widetilde{\Omega}_{t+s}^{+}\supset\Omega_{t}^+\text{ and }d(\widetilde{\Gamma}_{t+s},\Gamma_t)\geq M+\widetilde{M}\text{ for all }t\in\mathbb{R}\text{ and }s\geq s_0.
\end{equation*}
Then one can deduce from above that
\begin{equation}\label{appendix2-3}
	(x\in\Omega_{t}^+)\text{ or }(x\in\Omega_{t}^-\text{ and }d(x,\Gamma_t)\leq M)\Rightarrow(\widetilde{u}_s(t,x)\geq 1-\sigma),
\end{equation}
Let 
$$\omega_M^-:=\{(t,x)\in\mathbb{R}\times\mathbb{R}^N; x\in\Omega_{t}^-\text{ and }d(x,\Gamma_t)\geq M\};\quad \omega_M^+:=\mathbb{R}\times\mathbb{R}^N\setminus\omega_M^-.$$
Then it follows from (\ref{appendix2-1}), (\ref{appendix2-2}) and (\ref{appendix2-3}) that
\begin{equation}\label{appendix2-4}
	\widetilde{u}_s(t,x)-u(t,x)\geq 1-2\sigma> 0\text{ on }\partial\omega_M^-.
\end{equation} 

\textit{Step~2: $\widetilde{u}_s\geq u\text{ in }\omega_M^-$ for all $s\geq s_0$.} Fix $s\geq s_0$ and define $\varepsilon^*=\inf\{\varepsilon>0; \widetilde{u}_s\geq u-\varepsilon\text{ in }\omega_M^-\}.$ Then $\varepsilon^*$ is a well-defined nonnegative number and we only need to prove that $\varepsilon^*=0$. Assume by contradiction that $\varepsilon^*>0$. There exists a sequence of real numbers $\{\varepsilon_n\}_{n\in\mathbb{N}}>0$ and a sequence of points $\{(t_n,x_n)\}_{n\in\mathbb{N}}$ in $\omega_M^-$ such that
\begin{equation}\label{appendix2-5}
	\varepsilon_n\to\varepsilon^*\text{ as }n\to+\infty\text{ and }\widetilde{u}_s(t_n,x_n)<u(t_n,x_n)-\varepsilon_n\text{ for all }n\in\mathbb{N}.
\end{equation}
The globally boundedness of $\nabla u$ combined with (\ref{appendix2-4}) and (\ref{appendix2-5}) imply that there is $\rho>0$ such that 
$$\liminf\limits_{n\to+\infty}d(x_n,\Gamma_{t_n})\geq M+2\rho.$$ 
Since $u$ is a sub-invasion of $0$ by $1$, one immediately knows that there is $n_0$ such that
\begin{equation}\label{appendix2-6}
	x\in\Omega_{t}^-\text{ and }d(x,\Gamma_t)\geq M\text{ for all }n\geq n_0,x\in \overline{B(x_n,\rho)}\text{ and }t\leq t_n.
\end{equation}

Thus, even if it means decreasing $\rho$, one can assume without loss of generality that $\rho<\tau$, where $\tau$ is given in {\color{red} (\ref{transition5}) }. Then there is a sequence $\{y_n\}_{n\in\mathbb{N}}$ in $\mathbb{R}^N$ such that
\begin{equation}\label{appendix2-7}
	y_n\in\Omega_{t_n-\tau+\rho}^-\text{ and }d(y_n,\Gamma_{t_n-\tau+\rho})=M+\rho\text{ for all }n\geq n_0.
\end{equation}
Define a $C^1$ path $P_n: [0,1]\to\Omega_{t_n-\tau+\rho}^-$ with $P_n(0)=x_n, P_n(1)=y_n$, one can combine (\ref{appendix2-6}), (\ref{appendix2-7}) and $1$ invades $0$ to get that, for each $n\geq n_0$, the set
\begin{equation*}
	E_n=[t_n-\tau,t_n]\times\overline{B(x_n,\rho)}\cup[t_n-\tau,t_n-\tau+\rho]\times\{x\in\mathbb{R}^N; d(x,P_n([0,1]))\leq\rho\}
\end{equation*}
is included in $\omega_M^-.$ As a consequence, 
\begin{equation*}
	v:=\widetilde{u}_s-(u-\varepsilon^*)\geq 0\text{ in }E_n\text{ for all }n\geq n_0.
\end{equation*}
Notice that $f(x,\cdot)$ is nonincreasing in $(-\infty,\sigma]$, one has that
\begin{equation*}
	v_t(t,x)-\nabla_x\cdot(A(x)\nabla v(t,x))+Lv(t,x)\geq 0\text{ in }E_n\text{ for all }n\geq n_0,
\end{equation*}
where $L=||f(x,u)||_{L^{\infty}(\mathbb{T}^N\times[0,1])}<+\infty$.

Let us focus on the distance between those sequences we defined just now. We claim that $\{d(x_n,\Gamma_{t_n})\}_{n\in\mathbb{N}}$ is bounded. Otherwise, up to extraction of subsequences, one has that $d(x_n,\Gamma_{t_n})\to+\infty$ and then $u(t_n,x_n), \widetilde{u}_s(t_n,x_n)\to 0$. However, it follows from (\ref{appendix2-5}) that $u(t_n,x_n)>\widetilde{u}_s(t_n,x_n)+\varepsilon_n\to\varepsilon^*>0$ as $n\to+\infty$, which is a contradiction. Moreover, one knows from (\ref{transition5}) that there is a sequence $\{\widetilde{x}_n\}_{n\in\mathbb{N}}$ in $\mathbb{R}^N$ such that $\widetilde{x}_n\in\Gamma_{t-\tau}$ for all $n\in\mathbb{N}$ and $\sup\{d(x_n,\widetilde{x}_n); n\in\mathbb{N}\}<+\infty$. Thus, for all $n\geq n_0$,
\begin{equation*}
	d(x_n,y_n)=d(x_n,\Gamma_{t_n-\tau+\rho})-(M+\rho)\leq d(x_n,\Gamma_{t_n-\tau})-(M+\rho)\leq d(x_n,\widetilde{x}_n)-(M+\rho)<+\infty.
\end{equation*}
Therefore, since $v(t_n,x_n)\to 0$ as $n\to+\infty$, it follows from the linear parabolic estimates that
\begin{equation*}
	v(t_n-\tau,y_n)\to 0\text{ as }n\to+\infty.
\end{equation*}
But, according to (\ref{appendix2-7}), there exists a sequence $\{z_n\}_{n\in\mathbb{N}}$ such that 
\begin{equation*}
	z_n\in\Omega_{t_n-\tau+\rho},\, d(z_n,y_n)=\rho\text{ and }d(z_n,\Gamma_{t_n-\tau+\rho})=M\text{ for all }n\geq n_0.
\end{equation*}
Since $u$ and $\nabla u$ are all globally bounded in $\mathbb{R}\times\mathbb{R}^N$ from standard parabolic interior estimates, even if it means decreasing $\rho$, one has that
\begin{equation}\label{appendix2-8}
	\rho\cdot(||u||_{L^{\infty}(\mathbb{R}\times\mathbb{R}^N)}+||\nabla_x u||_{L^{\infty}(\mathbb{R}\times\mathbb{R}^N)})\leq \varepsilon^*/2.
\end{equation}
Eventually, for all $n\geq n_0$, it follows from (\ref{appendix2-1}), (\ref{appendix2-3}) and (\ref{appendix2-8}) that
\begin{equation*}
	v(t_n-\tau,y_n)=\widetilde{u}_s(t_n-\tau,y_n)-u(t_n-\tau,y_n)+\varepsilon^*\geq 1-2\sigma+\varepsilon^*/2>0.
\end{equation*}
This reaches a contradiction, and hence $\varepsilon^*=0$.

\textit{Step~3: $\widetilde{u}_s\geq u\text{ in }\omega_M^+$ for all $s\geq s_0$.} This proof uses similar arguments as in Step~2, but the regions we are going to construct in $\omega_M^+$ are different from $E_n$ in Step~2, because $\omega_M^+$ is now  nondecreasing with respect to time $t$. Fix $s\geq s_0$ and define $\varepsilon_*=\inf\{\varepsilon>0; \widetilde{u}_s\geq u-\varepsilon\text{ in }\omega_M^+\}$. We now prove that $\varepsilon_*=0$. Assume by contradiction that $\varepsilon_*>0$. There exists a sequence of real numbers $\{\varepsilon_n\}_{n\in\mathbb{N}}>0$ and a sequence of points $\{(t_n,x_n)\}_{n\in\mathbb{N}}$ in $\omega_M^-$ such that
\begin{equation}\label{appendix2-9}
	\varepsilon_n\to\varepsilon_*\text{ as }n\to+\infty\text{ and }\widetilde{u}_s(t_n,x_n)<u(t_n,x_n)-\varepsilon_n\text{ for all }n\in\mathbb{N}.
\end{equation}
By similar arguments in Step~2, one gets that $\{d(x_n,\Gamma_{t_n})\}_{n\in\mathbb{N}}$ is bounded. From (\ref{transition5}), there is a sequence $\{\widetilde{x}_n\}_{n\in\mathbb{N}}$ in $\mathbb{R}^N$ such that $\widetilde{x}_n\in\Gamma_{t_n-\tau}$ for all $n\in\mathbb{N}$ and $\sup\{d(x_n,\widetilde{x}_n); n\in\mathbb{N}\}<+\infty$. By (\ref{transition2}), there is $r>0$ and $\{y_n\}_{n\in\mathbb{N}}$ in $\mathbb{R}^N$ such that
\begin{equation*}
	y_n\in\Omega_{t_n-\tau}^-,\, d(\widetilde{x}_n,y_n)=r\text{ and }d(y_n,\Gamma_{t_n-\tau})\geq M\text{ for all }n\in\mathbb{N}.
\end{equation*}
Then there exists $\{z_n\}_{n\in\mathbb{N}}$ in $\mathbb{R}^N$ such that $z_n\in\Omega_{t_n-\tau}^-$ and $d(z_n,\Gamma_{t_n-\tau})= M\text{ for all }n\in\mathbb{N}$. Combining the information above, it is easy to get that $\{d(x_n,z_n)\}_{n\in\mathbb{N}}$ is bounded.

Now choose $K\in\mathbb{N}\setminus\{0\}$ such that
\begin{equation*}
	K\rho\geq \max(\tau,\sup\{d(x_n,z_n;n\in\mathbb{N})\}).
\end{equation*}
For each $n\in\mathbb{N}$ and $0\leq i\leq K-1$, let
\begin{equation*}
	E_{n,i}=[t_n-\frac{i+1}{K}\tau,t_n-\frac{i}{K}\tau]\times \overline{B(X_{n,i},2\rho)},
\end{equation*}
where $\{X_{n,i}\}_{1\leq i\leq K}$ is a sequence in $\mathbb{R}^N$ such that
\begin{equation*}
	X_{n,0}=x_n, X_{n,K}=z_n\text{ and }d(X_{n,i},X_{n,i+1})\leq\rho\text{ for each }0\leq i\leq K-1.
\end{equation*}
One can check from (\ref{appendix2-9}) and the global boundedness of $\widetilde{u}_s-u$ and $\nabla(\widetilde{u}_s-u)$ that $w:=\widetilde{u}_s-(u-\varepsilon_*)<\varepsilon_*$ for large $n$. This implies that $E_{n,0}\subset\omega_M^+$ for large $n$, because we know from Step~2 that $w\geq \varepsilon_*$ in $\omega_M^-$. Since $f(x,\cdot)$ is nonincreasing in $[1-\sigma,+\infty)$, one can follow the similar arguments in Step~2 to use the linear parabolic estimates in $E_{n,0}$, and finally obtain that
\begin{equation*}
	w(t_n-\tau/K,X_{n,1})\to 0\text{ as }n\to+\infty.
\end{equation*}
An immediate induction yields $w(t_n-\tau,z_n)\to 0$ as $n\to+\infty$. However, we know from the definition of $z_n$ that $(t_n-\tau,z_n)\in\omega_M^-$, and hence $w(t_n-\tau,z_n)\leq\varepsilon_*$, which is a contradiction.

\textit{Step 4: Existence of the smallest $T$.} We know from Steps~1-3 that $\widetilde{u}_s\geq u$ in $\mathbb{R}\times\mathbb{R}^N \text{ for all }s\geq s_0.$ Now let
\begin{equation*}
	s_*:=\inf\{s\in\mathbb{R}; \widetilde{u}_s\geq u\text{ in }\mathbb{R}\times\mathbb{R}^N\}.
\end{equation*}
Then one has $s_*\leq s_0$ and $s_*>-\infty$ because $0<u(t,x)<1$ for all $(t,x)\in\mathbb{R}\times\mathbb{R}^N$ and $\widetilde{u}_s(t_0,x_0)\to 0$ as $s\to-\infty$ for all $(t_0,x_0)\in\mathbb{R}\times\mathbb{R}^N$. Thus, $s_*$ is well-defined and 
$$\widetilde{u}_{s_*}(t,x)=\widetilde{u}(t+s_*,x)\geq u(t,x)\text{ for all }(t,x)\in\mathbb{R}\times\mathbb{R}^N.$$
This gives that $s_*$ is the smallest $T$ we need, so we let $T=s_*$ in the following proof.

 Assume now that $\inf\{\widetilde{u}_T(t,x)-u(t,x); d(x,\Gamma_t)\leq M\}>0$. Then there is $\eta_0>0$ such that for any $\eta\in(0,\eta_0]$, $\widetilde{u}_{T-\eta}-u\geq 0$ for all $(t,x)\in\mathbb{R}\times\mathbb{R}^N$ such that $d(x,\Gamma_t)\leq M$. By the same arguments in Steps~2-3, one can prove that $\widetilde{u}_{T-\eta}-u\geq 0$ for all $(t,x)\in\mathbb{R}\times\mathbb{R}^N$, which contradicts the definition of $T$. Therefore, $\inf\{\widetilde{u}_T(t,x)-u(t,x); d(x,\Gamma_t)\leq M\}=0$ and consequently there exists a sequence $\{(t_n,x_n)\}_{n\in\mathbb{N}}$ of $\mathbb{R}\times\mathbb{R}^N$ such that
 \begin{equation*}
 	d(x_n,\Gamma_{t_n})\leq M\text{ for all }n\in\mathbb{N}\text{ and }\widetilde{u}_T(t_n,x_n)-u(t_n,x_n)\to 0\text{ as }n\to+\infty.
 \end{equation*}
This completes the proof.
\end{proof}

\section*{Acknowledgments}

This work is supported by the fundamental research funds for the central universities and NSF of China (No. 12101456, No. 12471201).


\begin{thebibliography}{99}

\footnotesize{

\bibitem{BBC} H. Berestycki, J. Bouhours and G. Chapuisat, Front blocking and propagation in cylinders with varying cross section, {\it Calc. Var. Partial Differ. Equ.} \textbf{55} (2016), 1-32.

	\bibitem{MR1900178}
	H. Berestycki and F. Hamel, Front propagation in periodic excitable media, {\it Comm. Pure Appl. Math.} \textbf{55} (2002), 949-1032.
	
	\bibitem{MR2898886}
	H. Berestycki and F. Hamel,
	 Generalized transition waves and their properties, {\it Comm. Pure Appl. Math.} \textbf{65} (2012), 592-648.
	
\bibitem{BHM}	H. Berestycki, F. Hamel and H. Matano, Bistable travelling waves around an obstacle, {\it Comm. Pure Appl. Math.} \textbf{62} (2009), 729-788.
	
	\bibitem{MR3437538}
	Z.H. Bu and Z.C. Wang,
	\newblock Curved fronts of monostable reaction-advection-diffusion equations in
	space-time periodic media.
	\newblock {\em Commun. Pure Appl. Anal.} \textbf{15} (2016), 139-160.
	
	\bibitem{CG} G. Chapuisat and E. Grenier, Existence and non-existence of progressive wave solutions for a bistable reaction-diffusion equation in an infinite cylinder whose diameter is suddenly increased, {\it Commun. Partial Differ. Equ.} \textbf{30} (2005), 1805-1816.
	
	 \bibitem{CGHNR} X. Chen, J.S. Guo, F. Hamel, H. Ninomiya and J.M. Roquejoffre, Traveling waves with paraboloid like interfaces for balanced bistable dynamics, {\it Ann. Institut H. Poincar\'{e}, Analyse Non Lin\'{e}aire} \textbf{24} (2007), 369-393.
	
	\bibitem{MR4288217}
	W. Ding and T. Giletti,
	\newblock Admissible speeds in spatially periodic bistable reaction-diffusion
	equations,
	\newblock {\em Adv. Math.} \textbf{389} (2021): 107889.
	
	\bibitem{MR3689331}
	W.  Ding, F. Hamel, and X.Q. Zhao,
	\newblock Bistable pulsating fronts for reaction-diffusion equations in a
	periodic habitat,
	\newblock {\em Indiana Univ. Math. J.} \textbf{66} (2017), 1189-1265.
	
	\bibitem{MR3552256}
	A. Ducrot.
	\newblock A multi-dimensional bistable nonlinear diffusion equation in a
	periodic medium,
	\newblock {\em Math. Ann.} \textbf{366} (2016), 783-818.
	
	\bibitem{MR3240934}
	A. Ducrot, T. Giletti and H. Matano.
	\newblock Existence and convergence to a propagating terrace in one-dimensional
	reaction-diffusion equations,
	\newblock {\em Trans. Amer. Math. Soc.} \textbf{366} (2014), 5541-5566.
	
	\bibitem{MR3876557}
	M. El~Smaily.
	\newblock Curved fronts in a shear flow: case of combustion nonlinearities,
	\newblock {\em Nonlinearity} \textbf{31} (2018), 5643-5663.
	
	\bibitem{MR2833431}
	M. El~Smaily, F. Hamel and R. Huang,
	\newblock Two-dimensional curved fronts in a periodic shear flow,
	\newblock {\em Nonlinear Anal.} \textbf{74} (2011), 6469-6486.
	
	\bibitem{MR3420507}
	J. Fang and X.Q. Zhao,
	\newblock Bistable traveling waves for monotone semiflows with applications,
	\newblock {\em J. Eur. Math. Soc.} \textbf{17} (2015), 2243-2288.
	
	\bibitem{1977The}
	P.C. Fife and J.B. Mcleod,
	\newblock The approach of solutions of nonlinear diffusion equations to
	travelling front solutions,
	\newblock {\em Arch. Ration. Mech. Anal.} \textbf{65} (1977), 335-361.
	
	\bibitem{1937THE}
	R.A. Fisher.
	\newblock The wave of advance of advantageous genes,
	\newblock {\em Ann. Eugenics} \textbf{7} (1937), 335-369.
	
\bibitem{Gui2012} C. Gui, Symmetry of traveling wave solutions to the Allen-Cahn equation in $\R^2$, {\it Arch. Ration. Mech. Anal.} \textbf{203} (2012), 1037-1065.	
	
	\bibitem{MR3772873}
	H. Guo,
	\newblock Propagating speeds of bistable transition fronts in spatially
	periodic media.
	\newblock {\em Calc. Var. Partial Differential Equations.} \textbf{57} (2018): 47.
	
	
	
	
	
	
\bibitem{GHS} H. Guo, F. Hamel and W. J. Sheng, On the mean speed of bistable transition fronts in unbounded domains, {\it J. Math. Pures Appl.} \textbf{136} (2020), 92-157.
	
	\bibitem{MR4334733}
	H. Guo, W.T. Li, R. Liu and Z.C. Wang,
	\newblock Curved fronts of bistable reaction-diffusion equations in spatially
	periodic media,
	\newblock {\em Arch. Ration. Mech. Anal.} \textbf{242} (2021), 1571-1627.
	
	\bibitem{MR4211100}
	H. Guo and H. Monobe,
	\newblock {$V$}-shaped fronts around an obstacle.
	\newblock {\em Math. Ann.} \textbf{379} (2021), 661-689.
	
	\bibitem{preprint1}
	H. Guo and K. Wang,
	\newblock Some new bistable transition fronts with changing shape, 2024, preprint.
	(https://doi.org/10.48550/arXiv.2404.09237)
	
	\bibitem{MR2166719}
	F. Hamel, R. Monneau, and J.M. Roquejoffre,
	\newblock Existence and qualitative properties of multidimensional conical
	bistable fronts,
	\newblock {\em Discrete Contin. Dyn. Syst.} \textbf{13} (2005), 1069-1096.
	
\bibitem{HM2000} F. Hamel, R. Monneau, Solutions of semilinear elliptic equations in $R^N$ with conical-shaped level sets, {\it Comm. Part. Diff. Equations} \textbf{25} (2000), 769-819.
	
	\bibitem{MR4517350}
	F. Hamel and M. Zhang,
	\newblock Reaction-diffusion fronts in funnel-shaped domains,
	\newblock {\em Adv. Math.} \textbf{412} (2023): 108807.
	
	\bibitem{MR4440469}
	R. Huang, Y.N. Ren and J.X. Yin.
	\newblock On the existence and monotonicity of curved fronts in a periodic
	shear flow,
	\newblock {\em Methods Appl. Anal.} \textbf{28} (2021), 325-336.
	
	\bibitem{MR4656878}
	F.J. Jia, W.J. Sheng and Z.C. Wang,
	\newblock Pulsating fronts of spatially periodic bistable reaction-diffusion
	equations around an obstacle,
	\newblock {\em J. Nonlinear Sci.} \textbf{34} (2024): 4.
	
	\bibitem{MR2838366}
	Y. Kurokawa and M. Taniguchi.
	\newblock Multi-dimensional pyramidal travelling fronts in the Allen-Cahn
	equations,
	\newblock {\em Proc. Roy. Soc. Edinburgh Sect. A} \textbf{141} (2011), 1031-1054.

\bibitem{Li2021} L. Li, Time-periodic planar fronts around an obstacle, {\it J. Nonlinear Sci.} \textbf{31} (2021): 90.	
	
	\bibitem{MR2517769}
	N.W. Liu, W.T. Li, and Z.C. Wang,
	\newblock Entire solutions of reaction-advection-diffusion equations with
	bistable nonlinearity in cylinders,
	\newblock {\em J. Differential Equations} \textbf{246} (2009), 4249-4267.
	
	\bibitem{Ninomiya2021} H. Ninomiya, Entire solutions of the Allen-Cahn-Nagumo equation in a multi-dimensional space, {\it Discrete Contin. Dyn. Syst. Ser. A} \textbf{41} (2021), 395-412.	
	
	\bibitem{MR2139343}
	H. Ninomiya and M. Taniguchi.
	\newblock Existence and global stability of traveling curved fronts in the Allen-Cahn equations,
	\newblock {\em J. Differential Equations} \textbf{213} (2005), 204-233.
	
	\bibitem{MR2220750}
	H. Ninomiya and M. Taniguchi,
	\newblock Global stability of traveling curved fronts in the Allen-Cahn
	equations,
	\newblock {\em Discrete Contin. Dyn. Syst.} \textbf{15} (2006), 819-832.
	
	\bibitem{NT2024} H. Ninomiya and M. Taniguchi, Traveling front solutions of dimension $n$ generate entire solutions of dimension $(n-1)$ in reaction-diffusion equations as the speeds go to infinity, {Arch. Ration. Mech. Anal.} to appear.
	
	\bibitem{MR2526414}
	J. Nolen and L. Ryzhik,
	\newblock Traveling waves in a one-dimensional heterogeneous medium,
	\newblock {\em Ann. Inst. H. Poincar\'{e} C Anal. Non Lin\'{e}aire}
	\textbf{26} (2009), 1021-1047.

\bibitem{QLS} S.X. Qiao, W.T. Li and J.W. Sun, Propagation phenomena for nonlocal dispersal equations in exterior domains, {\it J. Dynam. Differential Equations} \textbf{35} (2023), 1099-1131.

	
	\bibitem{MR850456}
	N. Shigesada, K. Kawasaki and E. Teramoto,
	\newblock Traveling periodic waves in heterogeneous environments,
	\newblock {\em Theoret. Population Biol.} \textbf{30} (1986), 143-160.

\bibitem{T2015} M. Taniguchi, An ($N-1$)-dimensional convex compact set gives an  $N$-dimensional traveling front in the Allen-Cahn equation, {\it SIAM J. Math. Anal.} \textbf{47} (2015),  455-476.	

\bibitem{T2019} M. Taniguchi, Axially asymmetric traveling fronts in balanced bistable reaction-diffusion equations, {\it Ann. Inst. H. Poincar\'{e} C Anal. Non Lin\'{e}aire} \textbf{36} (2019), 1791-1816.

\bibitem{T2020} M. Taniguchi, Axisymmetric traveling fronts in balanced bistable reaction-diffusion equations, {\it Discrete Contin. Dyn. Syst.} \textbf{40} (2020), 3981–3995.

\bibitem{T2024} M. Taniguchi,  Entire solutions with and without radial symmetry in balanced bistable reaction-diffusion equations, {\it Math. Ann.} \textbf{390} (2024), 3931–3967.
	
	\bibitem{MR2318388}
	M. Taniguchi,
	\newblock Traveling fronts of pyramidal shapes in the {A}llen-{C}ahn equations,
	\newblock {\em SIAM J. Math. Anal.} \textbf{39} (2007), 319-344.
	
	\bibitem{MR2494701}
	M. Taniguchi,
	\newblock The uniqueness and asymptotic stability of pyramidal traveling fronts
	in the Allen-Cahn equations,
	\newblock {\em J. Differential Equations} \textbf{246} (2009), 2103-2130.
	
	\bibitem{Taniguchi2012} M. Taniguchi, Multi-dimensional traveling fronts in bistable reaction-diffusion
equations, {\it Discrete Contin. Dyn. Syst.} \textbf{32} (2012), 1011-1046.
	
	\bibitem{MR3029132}
	E. Trofimchuk, M. Pinto, and S. Trofimchuk,
	\newblock Traveling waves for a model of the {B}elousov-{Z}habotinsky reaction,
	\newblock {\em J. Differential Equations} \textbf{54} (2013), 3690-3714.
	
	\bibitem{MR2771259}
	Z.C. Wang and J.H. Wu.
	\newblock Periodic traveling curved fronts in reaction-diffusion equation with
	bistable time-periodic nonlinearity,
	\newblock {\em J. Differential Equations} \textbf{250} (2011), 3196-3229.
	
	\bibitem{MR1251222}
	J.X. Xin,
	\newblock Existence and nonexistence of traveling waves and reaction-diffusion
	front propagation in periodic media,
	\newblock {\em J. Statist. Phys.} \textbf{73} (1993), 893-926.
	
	\bibitem{MR1129560}
	X. Xin,
	\newblock Existence and stability of traveling waves in periodic media governed
	by a bistable nonlinearity,
	\newblock {\em J. Dynam. Differential Equations} \textbf{3} (1991), 541-573.
	
	\bibitem{MR4593934}
	S.B. Zhang and Z.H. Bu,
	\newblock The stability of curved fronts of monostable reaction-advection-diffusion equations in space-time periodic media,
	\newblock {\em Appl. Math. Lett.} \textbf{144} (2023): 108724.
	
	\bibitem{MR4518515}
	S.B. Zhang, Z.H. Bu, and Z.C. Wang,
	\newblock Periodic curved fronts in reaction-diffusion equations with ignition
	time-periodic nonlinearity,
	\newblock {\em Discrete Contin. Dyn. Syst. Ser. B.} \textbf{28} (2023), 2621-2654.
	
	\bibitem{MR3724753}
	A. Zlato\v{s},
	\newblock Existence and non-existence of transition fronts for bistable and
	ignition reactions,
	\newblock {\em Ann. Inst. H. Poincar\'{e} C Anal. Non Lin\'{e}aire}
	\textbf{34} (2017), 1687-1705.
	

}

\end{thebibliography}
\end{document}